\let\OLDthebibliography\thebibliography
\renewcommand\thebibliography[1]{
  \OLDthebibliography{#1}
  \setlength{\parskip}{0pt}
  \setlength{\itemsep}{0pt plus 0.3ex} }
\numberwithin{equation}{section}
\theoremstyle{plain}
\newtheorem{theorem}{Theorem}[section]
\newtheorem{lemma}[theorem]{Lemma}
\newtheorem{prop}[theorem]{Proposition}
\newtheorem{ass}[theorem]{Assumptions}
\newtheorem{corollary}[theorem]{Corollary}
\newtheorem{definition}[theorem]{Definition}
\theoremstyle{definition}
\newenvironment{remark}{\pushQED{\qed} \remarkbase}{\popQED\endremarkbase}
\newcommand{\N}{{\mathbb N}}
\newcommand{\R}{{\mathbb R}}
\newcommand{\mB}{\mathcal{B}}
\newcommand{\mD}{\mathcal{D}}
\newcommand{\mE}{\mathcal{E}}
\newcommand{\mF}{\mathcal{F}}
\newcommand{\mG}{\mathcal{G}}
\newcommand{\mM}{\mathcal{M}}
\newcommand{\mN}{\mathcal{N}}
\newcommand{\mS}{\mathcal{S}}
\newcommand{\mT}{\mathcal{T}}
\newcommand{\mU}{\mathcal{U}}
\newcommand{\mW}{\mathcal{W}}
\newcommand{\g}{\gamma}
\newcommand{\e}{\varepsilon}
\newcommand{\ph}{\varphi}
\newcommand{\Om}{\Omega}
\newcommand{\om}{\omega}
\newcommand{\s}{\sigma}
\renewcommand{\t}{\tau}
\renewcommand{\th}{\vartheta}
\newcommand{\la}{\langle}
\newcommand{\ra}{\rangle}
\newcommand{\pa}{\partial}
\renewcommand{\div}{\mathrm{div}\,}
\newcommand{\grad}{\nabla}
\title{Optimal control of mean-field limit of multiagent systems with and without common noise}
\author{\normalsize{Giuseppe La Scala}}
\date{} 
\begin{document}

\maketitle

\noindent
\textbf{Abstract.} We consider a generic, suitable class of optimal control problems under a constraint given by a finite-dimensional SDE-ODE system, describing a system of two interacting species of particles: the \emph{herd}, described by SDEs, and the \emph{herders}, described by ODEs with the addition of a control function. 

In particular, we firstly show that for a low number of herders and for the limit of large number of herd individuals, the SDE-ODE system can be approximated by an infinite-dimensional system given by a McKean-Vlasov single SDE coupled with ODEs. Then, thanks to this we show the $\Gamma-$convergence of the optimal control problem for the finite-dimensional system to a certain optimal control problem for the mean-field system.

Differently from \cite{ACS}, we do not consider an additive noise for the herd, but a more general class, given by \emph{idiosyncratic} noises (due to a single herd individual) together with \emph{common} noise (due to how the environment affects the whole herd), and they are independent one from another. As well as this, we consider a more general class of control functions in the ODEs for herders, where the control is applied not only on the herd dynamics, but also on the herd one.

\bigskip

\emph{MSC 2020:} 49J20, 49J55, 60H10.

\bigskip

\tableofcontents

\bigskip

\section{Introduction}

\emph{Model and motivations.} 
In the present paper, we consider the following SDE-ODE system:
\begin{align}&dX_n^{(N)}(t)=\Big\{\frac{1}{N}\sum_{i=1}^N H_1(X_n^{(N)}(t) - X_i^{(N)}(t)) + \frac{1}{M}\sum_{j=1}^M K_1(Y_j^{(N)}(t) - X_n^{(N)}(t))\Big\}dt \notag
\\&\qquad\qquad+ \Big\{\s_*^{(i)}(t, Y^{(N)}(t), X_n^{(N)}(t),\mu_N(t))\Big\}dW_n^{(i)}(t) \notag
\\&\qquad\qquad+ \Big\{\s_*^{(c)}(t,Y^{(N)}(t),X_n^{(N)}(t),\mu_N(t))\Big\}dW^{(c)}(t), \label{SDE.finite}
\\&\notag
\\&\frac{d}{dt}Y_m^{(N)}(t)=\frac{1}{N}\sum_{i=1}^N K_2(Y_m^{(N)}(t) - X_i^{(N)}(t)) + \frac{1}{M}\sum_{j=1}^M H_2(Y_m^{(N)}(t)-Y_j^{(N)}(t)) \notag
\\&\qquad\qquad\quad+ u_m(t,Y^{(N)}(t),\mu_N(t)),\label{ODE.finite}
\\&\notag
\\&X_n^{(N)}(0)=X_n^{(0)},\qquad Y_m^{(N)}(0)=Y_m^{(0)}.\qquad1\le n\le N,\quad1\le m\le M.\label{Initial.data.finite}
\end{align}
Let us comment this model. It describes the motion of two interacting species moving in $\R^d$, with $d\in\{1,2,3\}$. 

The first one is what we call the \emph{herd}, made of $N$ individuals labelled by $X_n^{(N)}$, and the second one is what we call the \emph{herders}, made of $M$ individuals labelled by $Y_m^{(N)}$; we have also introduced the notations
\begin{align}&X^{(N)}:=(X_n^{(N)})_{1\le n\le N},\qquad Y^{(N)}:=(Y_m^{(N)})_{1\le m\le M},
\\&X^{(N,0)}:=(X_n^{(0)})_{1\le n\le N},\qquad Y^{(N,0)}:=(Y_m^{(0)})_{1\le m\le M},
\\&\mu_N(t):=\frac{1}{N}\sum_{j=1}^N\delta_{X_j^{(N)}(t)},\label{convenient.notation}
\end{align}
where $\mu_N$ is the statistical distribution of the herd, or else, its \emph{empirical measure}.
They are particles whose motion is due to the following factors:
\begin{itemize}

    \item the pairwise interaction between herd individuals is described by a kernel $H_1$, while the one between herders is described by $H_2$. Collisions between two individuals of the same species are allowed, and these interactions hold a moderate intensity (see \eqref{K.L.Lipschitz});
    
    \item the pairwise (not symmetric, in general) interaction between herd individuals with each herder is described by $K_1$, while the one between herders with each herd individual is described by $K_2$. Collisions between two individuals of different species are allowed, and these interactions hold a moderate intensity (see again \eqref{K.L.Lipschitz});

    \item the motion of herd individuals is also induced by two further \emph{stochastic} independent effects: the \emph{idiosyncratic noises} $\s_*^{(i)}dW_n^{(i)}$, due to the single herd individuals, and the \emph{common noise} $\s_*^{(c)}dW^{(c)}$, due to how the whole herd interacts with the outer environment. They can generally depend on time as well as on the herders positions, the herd individual own position and the statistical herd position, and their intensities are supposed to be moderate (see \eqref{idiosyncratic.noise.is.continuous}-\eqref{idiosyncratic.noise.is.lipschitz} and \eqref{common.noise.is.continuous}-\eqref{common.noise.is.lipschitz});

    \item the motion of herders is also induced by a \emph{stochastic control function} $u_m$, different from herder to herder and depending on time as well as their own position and that of the whole herd $\mu_N$. Each control function is considered to be a continuous process adapted to the same filtration of the common noise, to have a moderate intensity (see \eqref{u.lipschitz.in.measure}), and it is chosen to be (compare Assumptions \ref{assumptions.control})
    \begin{equation}\label{separation of variable}u_m(t,
    y,\nu):=h_m(t)g_m(y,\nu).
    \end{equation}
    
\end{itemize}

In a number of applications, there is an interest in finding minima or maxima to functionals of the kind
\begin{align}\label{F.N.finite} \mF_N(h,g):&=\mathbb{E}\Big[\int_0^T\Psi_\rho(h(t),g(Y^{(N)}(t),\mu_N(t)))\,dt\Big]\notag
\\&+\mathbb{E}\Big[\int_0^T\Psi_\t(t,Y^{(N)}(t),\mu_N(t))\,dt\Big] \notag
\\&+\mathbb{E}\Big[\Psi_\e(Y^{(N)}(T),\mu_N(T))\Big].
\end{align}
subjected to the constraint \eqref{SDE.finite}-\eqref{Initial.data.finite}.
To explain each term, the first integral is what it is usually called the \emph{running cost}, while the last term is usually called the \emph{endpoint cost}. The term in the middle, as well as the endpoint cost, does not depend on the control functions $h=(h_1,\dots,h_M)$ and $g=(g_1,\dots,g_M)$ but purely on dynamical variables, and differently from the endpoint cost, it takes into account all of the dynamics of \eqref{SDE.finite}-\eqref{Initial.data.finite}: we then call it \emph{transient cost}. This kind of models has been taken into consideration in \cite{ABCK, AFRB, LRMA, Pierson.Schwager, Sebastian.Montijano}.

\bigskip

In many models like those studied in the previous papers, the herd is made of a huge number $N$ of individuals: even though from a theoretical point of view the SDE-ODE system \eqref{SDE.finite}-\eqref{Initial.data.finite} and the optimal control problem for \eqref{F.N.finite} are well-posed under some suitable regularity assumptions (see Assumptions \ref{assumptions}, Lemma \ref{lemma:finite.well.posedness} for the equations and Assumptions \ref{assumptions.control}, Lemma \ref{lemma:F.N.has.solution} for the optimal control problem), from a numerical one simulating solutions and finding optimal controls could be a very hard task to achieve. As a result, one would be then interested in finding an \emph{effective model} for both the system \eqref{SDE.finite}-\eqref{Initial.data.finite} and the functional \eqref{F.N.finite}.

\bigskip

\emph{Heuristics: the mean-field limit without and with common noise.}
The natural way to produce such an effective model is to send $N\to\infty$ and see what occurs; let us start by the constraints \eqref{SDE.finite}-\eqref{Initial.data.finite}.

Heuristically speaking, since the interaction terms with the herd are scaled by a factor $O(N^{-1})$, they can be seen as 
$H_1*\mu_N$ and $K_2*\mu_N$, while the interaction terms with the herders are not scaled by factors depending on $N$, so it does not hold the same thing.

The choice of that scaling factor is not by chance. One would have that the eventual convergence of the statistical distributions $\mu_N$ for the herd to a limit distribution $\mu$ for the "infinite" herd would imply that as well, $H_1*\mu_N\sim H_1*\mu$ and $K_2*\mu_N\sim K_2*\mu$. Then each individual $X_n$ of the "infinite" herd and each herder $Y_m$ is expected to satisfy the \emph{mean-field problem}
\begin{align}&dX_n(t)=\Big\{H_1*\mu^{(i)}(t)(X_n(t)) + \frac{1}{M}\sum_{j=1}^M K_1(Y_j(t) - X_n(t))\Big\}dt \notag
\\&\qquad\quad+\Big\{\s_*^{(i)}(t,Y(t),X_n(t),\mu^{(i)}(t))\Big\}\,dW_n^{(i)}(t) + \Big\{\s_*^{(c)}(t,Y(t),X_n(t),\mu^{(i)}(t))\Big\}\,dW^{(c)}(t), \label{SDE.mean.field}
\\&\notag
\\&\frac{d}{dt}Y_m(t)=K_2*\mu^{(i)}(t)(Y_m(t)) + \frac{1}{M}\sum_{j=1}^M H_2(Y_m(t)-Y_j(t)) + u_m(t,Y(t),\mu^{(i)}(t))\label{ODE.mean.field},
\\&\notag
\\&X_n(0)=X_n^{(0)},\qquad Y_m(0)=Y_m^{(0)},\qquad\qquad n\in\N,\quad1\le m\le M \label{Initial.data.mean.field}.
\end{align}
When there is no common noise, that is $\s_*^{(c)}\equiv0$, this happens when the herd individuals are mutually independent: in this case, at the mean-field limit they remain independent because heuristically, the equations \eqref{SDE.mean.field} are uncoupled, and then any observables along them become uncorrelated. This phenomenon is called \emph{propagation of chaos}. 

Now, one wonders how to get the limit measure $\mu^{(i)}$. This is basically based on $\hat{\text{I}}$to calculus applied to \eqref{SDE.finite}-\eqref{Initial.data.finite}: if one considers any observable $\ph$ satisfying some suitable regularity properties then one would get something of the kind
\begin{align*}\la\mu_N(t),\ph\ra-\la\mu_N(0),\ph\ra
= \int_0^t\la\mu_N(s),\grad\ph\cdot V(s,\mu_N(s),\cdot) + \frac12\text{tr}\{H(\ph)\s_*(s,\mu_N(s),\cdot)
 \ra\,ds + O(N^{-\frac12})     
\end{align*}
where $V$ is defined in \eqref{V.def}. Then, at the limit $\mu^{(i)}$ is expected to be a measure-valued solution (uniquely, indeed) for the Fokker-Planck PDE Cauchy Problem
\begin{align}&\pa_t\mu^{(i)} - \frac12\text{tr}(H(\s_*\mu^{(i)}))=-\div(V\mu^{(i)}),    \label{Deterministic.Fokker.Planck.PDE}
\\&\mu^{(i)}(0)=\text{Law}(X_n^{(0)});  \label{Deterministic.Fokker.Planck.Initial.datum}
\end{align}
in other words, $\mu^{(i)}$ would correspond to the law of each mean-field process $X_n$.

Instead, when there is common noise, that is $\s_*^{(c)}\not\equiv0$, the mean-field SDEs are still coupled by the presence of the common noise, and so properly speaking we do not expect propagation of chaos even though initially, the herd individuals are independent one from another. To fix this problem and then get the mean-field limit, the following observation is crucial: if we know how the common noise is realized, then it becomes like a known additive term into the equations \eqref{SDE.mean.field}-\eqref{Initial.data.mean.field}, and so we would get uncoupled equations. Technically speaking, the idea is to condition the discrete problem \eqref{SDE.finite}-\eqref{Initial.data.finite} with respect to the filtration over which the common noise is measurable, and then take the mean-field limit. In this case, it turns out that we have the onset of \emph{conditional propagation of chaos}; precisely, in the mean-field limit the herd individuals become independent and observables along them uncorrelated, given the common noise.
As a result, the limit measure $\mu^{(i)}$ of each herd individual given the common noise is a continuous process being adapted to the same filtration of the common noise, and it corresponds to the conditional law of the corresponding $X_n$ given the common noise. Thus, it is almost surely a measure-valued solution of the Fokker-Planck SPDE Problem 
\begin{align}&d\mu^{(i)}=\Big\{\frac12\text{tr}(H(\s_*\mu^{(i)}))-\div(V\mu^{(i)})\Big\}dt - \div((\s_*^{(c)}dW^{(c)})\mu^{(i)})\label{parabolic.equation.common}
\\&\mu^{(i)}(0)=\text{Law}(X^{(0)}), \label{parabolic.initial.data.common}
\end{align}
where $\s_*^2$ is defined in \eqref{sigma.star.def}. This must be meant in the sense of \eqref{duality.fokker.planck.common}.

Ultimately, we expect convergence for the functional $\mF_N$ to a functional of the kind
\begin{align}\mF(h,g):&=\mathbb{E}^{(c)}\Big[\int_0^T\Psi_\rho(h(t),g(Y(t),\mu(t)))\,dt\Big]  \notag
\\&+\mathbb{E}^{(c)}\Big[\int_0^T\Psi_\t(t,Y(t),\mu(t))\,dt\Big] \notag
\\&+\mathbb{E}^{(c)}[\Psi_\e(Y(T),\mu(T))]\label{F.mean.field}
\end{align}
defined on the constraints \eqref{SDE.mean.field}-\eqref{Initial.data.mean.field}; the expectation is done with respect to the common noise probability space, which disappears if there is no common noise. 

\bigskip

\emph{Purpose and structure of the paper, and strategy of proof.}
In this paper, we want to establish the existence of minima for $\mF_N$ under the constraint \eqref{SDE.finite}-\eqref{Initial.data.finite} (Lemma \ref{lemma:F.N.has.solution}), and for $\mF$ under \eqref{SDE.mean.field}-\eqref{Initial.data.mean.field} (Lemma \ref{lemma:F.has.solution}); then, we want to prove that minima for $\mF_N$ $\Gamma-$converge to those of $\mF$ (Theorem \ref{thm:Gamma.convergence}). This will be done in Section 6 under regularity assumptions on controls which generalize those provided in \cite{ACS}.

To this term, we firstly need to show that the discrete constraints \eqref{SDE.finite}-\eqref{Initial.data.finite} converge to the mean-field ones \eqref{SDE.mean.field}-\eqref{Initial.data.mean.field} (Theorem \ref{thm:propagation.of.chaos}), which is an outstanding result alone; this is done between Section 3, where well-posedness of strong solutions is established, and Section 4, where the mean-field limit is established. We will follow the approach of Sznitman \cite{Sznitman}, consisting in Gr\"onwall-based estimates
and the Law of Large Numbers applied to the mean-field distribution of herd individuals (which is available thanks to independence hypothesis on the initial data), together with its extension provided in \cite{CDL} for the common noise case. From this, we will derive also the propagation of chaos for zero-common noise case, and the conditional one for the general one (Corollary \ref{cor:propagation.of.chaos}). Moreover, in Section 5 we will show that the mean-field limit (conditional) measure satisfies the stochastic Fokker-Planck equation \eqref{parabolic.equation.common}-\eqref{parabolic.initial.data.common}, and in the zero-common noise case, the PDE \eqref{Deterministic.Fokker.Planck.PDE}-\eqref{Deterministic.Fokker.Planck.Initial.datum}; in the latter case, we will prove uniqueness of its solution by standard duality argument based on the Feynman-Kac Formula (Lemma \ref{lemma:Feynman.Kac}). 

\bigskip

\emph{Differences with other related works, and technical issues.} The problem described above has been already studied in \cite{ACS} in the case where there is only one additive idiosyncratic stochastic noise, the control functions $u_m$ in \eqref{ODE.finite}, \eqref{ODE.mean.field} depend only on time and on the statistical distribution of the herd, and the cost functionals \eqref{F.N.finite},\eqref{F.mean.field} do not contain also the endpoint cost. In particular, we mean to provide a more general framework for optimal control problems with SDE-ODE constraints, and their mean-field limits. 

\medskip

Our approach relies on the fact that the techniques introduced in \cite{ACS} can be extended to the present work. However, differently from the additive idiosyncratic noise case, the multiplicative noises appear when trying to get Gr\"onwall-based estimates, and they \emph{do not} cancel out: to fix this, we introduce the Burkholder-Davis-Gundy Inequality (see \eqref{Doob.maximal.inequality}) to control all the moments of the noises and therefore, to get the convergence estimates \eqref{propagation.of.chaos}-\eqref{convergence.of.measures} and then \eqref{propagation.of.chaos.1}-\eqref{convergence.of.measures.1}, relying also on the Quantitative Law of Large Numbers \eqref{law.of.large.numbers} proved in \cite{Fournier.Guillin}. In \cite{ACS}, only the $1-$st moment was estimated, which is in fact enough to have the mean-field limit and the propagation of chaos. We point out that differently from \cite{ACS}, where Doob Maximal Inequality has been used, we \emph{necessarily} need the Burkholder-Davis-Gundy Inequalities for the present setting to get the estimates \eqref{propagation.of.chaos}-\eqref{convergence.of.measures}: indeed, when the initial data are only square-integrable ($p=2$), we cannot get estimates for $q<p$ by using only the former inequality.

\medskip

Another difference is about the fact that to get uniqueness of the measure-valued solution for the Fokker-Planck equation in the zero-common noise case, we have not relied on the requirement of finite entropy as in \cite{ACS}, but on the weaker classical duality argument based on the solvability of the Backward Kolmogorov problem \eqref{Parabolic.problem}-\eqref{Parabolic.initial.datum} through the Feynman-Kac Formula \eqref{feynman.kac}, which is available since the coefficients of the equations are regular enough (see Assumptions \ref{assumptions}). As for the case $\s_*^{(c)}\not\equiv0$, for our knowledge the uniqueness of measure-valued solutions for \eqref{parabolic.equation.common}-\eqref{parabolic.initial.data.common} is a delicate issue and it holds at least when the coefficients have further differential properties than those we provided in Assumptions \ref{assumptions}, see for instance \cite{Kolokoltsov.Troeva, DMT}.

\medskip

As far as controls are concerned, in \cite{AACS} it has been considered a more general optimal control problem in the case of additive noise, where no separation of variables like in \eqref{separation of variable} has been made. Since in applications there is freedom in choosing controls, in the present paper we decided to consider separation of time control from that of dynamical variables, since we want to shift any possibility of generality on dynamical variables $\mu_N,\,Y^{(N)}$ for the discrete problem, and $\mu,\,Y$ for the mean-field one.

\bigskip

\emph{Open problems and future perspectives.} In the actual paper, we basically treat the well-posedness of the optimal control problems \eqref{discrete.optimal.control.problem}, \eqref{mean.field.optimal.control.problem} and the $\Gamma-$convergence of the first to the second. However, in applications the computation and the simulation of solutions to such problems are required: to this purpose, one should get a Pontryagin Maximum Principle or Dynamic Programming Principle, which could be subject to further investigations.

Another issue is that we have supposed in the constraints that the interaction coefficients have a moderate intensity. This allows us to follow Sznitman approach for getting quantitative convergence estimates \eqref{propagation.of.chaos} and \eqref{convergence.of.measures}. When it comes to singular interactions, this cannot be given for granted, and very often the latter appear in models, like Coulomb interactions. This would be another possible direction of future developments.

\bigskip

\emph{Related Literature.} 
The problem of herding, which the actual paper mainly concerns about, has been considered in the previously cited papers \cite{ABCK, AFRB, LBSRA, LRMA, Pierson.Schwager, Sebastian.Montijano} for applications in robotics, and it was theoretically studied \cite{ACS} in the case of additive idiosyncratic noise with control only on the herd distribution.

\medskip

The problem of mean-field approximations of multi-particle systems dates back to the papers \cite{Kac}, where, inspired by the cornerstone paper \cite{Boltzmann}, the notion of propagation of chaos was introduced in kinetic models. The extension to SDE and diffusion models was done in \cite{McKean}. As for quantitative results for propagation of chaos when there are moderate interactions in SDE and diffusion models, we refer to \cite{Oelschlager, Sznitman, Meleard} and to the review \cite{Jabin.Wang}. All these works on mean-field limit are related to the case of zero-common noise; in the presence of common noise, we refer to \cite{Coghi.Flandoli, CDFM} for the conditional propagation of chaos, while in \cite{DMT, Kolokoltsov.Troeva} the well-posedness of the Stochastic Fokker-Planck equation is studied. We further consider the papers \cite{AACS.1, BCC, DIRT, FPZ, GLM, Jabin.Wang.1, LLY} when the drift coefficients are singular.

\medskip

For optimal control mean-field problems through $\Gamma-$limit approach, we refer for instance to \cite{AAMS, ACFK, Fornasier.Solombrino, AACS}, while for Pontryagin maximum principle in Wasserstein spaces for such problems we refer to \cite{BFRS, Bonnet, Bonnet.Frankowska, Bonnet.Rossi, BPTTR, CPV, FPR}. In the case of common noise, we refer to \cite{CDL, Carmona.Delarue}, where in particular it was given a general framework for mean-field games with and without common noise.

\medskip

For further applications of mean-field approximations and control, we refer to \cite{Keller.Segel, CDFSTB, LLN, MBHJRB} for Biology and Chemistry, and to \cite{CHDB, Cucker.Smale, Donofrio.Hernandez, DMPW, Leonard.Fiorelli, PGE, Toscani} for multi-agent systems and opinion models. 

\medskip

For general treatment of probability calculus and stochastic processes, we refer to \cite{Baldi, Ikeda.Watanabe, Oksendal, Pavliotis, Revuz.Yor}; for a treatment of Wasserstein spaces and Quantitative Law of Large Numbers, we refer to \cite{ABS, Carmona.Delarue, Fournier.Guillin, Villani}.

\bigskip

\section{Preliminaries}

In the first two subsections, we introduce some notations, known concepts and results that will be used in the next sections, while in the third one we will build assumptions and definitions necessary to prove our results.

\subsection{Random variables and stochastic calculus}

Let $(B,d_B)$ be a complete metric space and for any fixed horizon $T>0$, let $C([0,T];B)$ be the set of continuous functions defined over $[0,T]$ and taking values in $B$. 

For any given set $\Om$ and any complete $\s-$algebra $\mF$ over $\Om$, we denote by $\mM(\Om;B)$ the set of random variables defined on $\Om$ which are $\mF-$measurable and take values over $B$. We denote by $\mu:=\text{Law}(X)$ the law of $X$, which is such that for all $A\in B$,
\begin{equation*}\mu(A):=\mathbb{P}(X\in A)=\mathbb{P}(\{\om\in\Om\colon\,X(\om)\in A\}),
\end{equation*}
and for all $\mu-$integrable functions $\ph\colon\,B\longmapsto\R$ we will write
\begin{equation*}\la\mu,\ph\ra:=\int_B\ph(x)\,\mu(dx).
\end{equation*}
We denote also by $\mathbb{E}\colon\,\mM(\Om;B)\longmapsto\R$ the associated expectation operator; $X$ is said to be centered if $\mathbb{E}[X]=0$. Two random variables $X_1\in\mM(\Om;B_1),\,X_2\in\mM(\Om;B_2)$ are said to be independent if for any $A_1\in B_1,\,A_2\in B_2$, one has
\begin{equation*}\mathbb{P}(X_1\in A_1,\,X_2\in A_2)=\mathbb{P}(X_1\in A_1)\cdot\mathbb{P}(X_2\in A_2).
\end{equation*}
If for some $d\in\N$ we have $B=\R^d$ and $X\in\mM(\Om;\R^d)$, given a sub-$\s-$algebra $\mD\subset\mF$, the conditional expectation $\mathbb{E}[X|\mD]$ is defined to be the equivalence class of random variables $Z$ which are $\mD-$measurable such that for any $D\in\mD$,
\begin{equation*}\int_D Z\,d\mathbb{P}=\int_D X\,d\mathbb{P};
\end{equation*}
we recall that its existence and uniqueness is guaranteed by Radon-Nikodym Theorem.

About the conditional expectation, we have the so called Freezing Lemma:
\begin{lemma}[Lemma 4.1, \cite{Baldi}]\label{lemma:freezing}
Let $(\Om,\mF,\mathbb{P})$ be a probability space, let $\mG,\mD$ two independent sub-$\s-$algebras of $\mF$. Let $X$ be a $\mD-$measurable random variable taking values in the measurable space $(E,\mE)$, let $\Psi\colon\,E\times\Om\longmapsto\R$ be an $\mE\otimes\mG-$measurable function such that $\om\longmapsto\Psi(X(\om),\om)$ is integrable.

Then, for all $\xi\in\Om$ one has that $f(\xi):=\mathbb{E}[\Psi(X(\cdot),\cdot)|\mD](\xi)$ is $\mD-$measurable and it holds the identity
\begin{equation}f(\xi)=\mathbb{E}[\Psi(X(\xi),\cdot)].
\end{equation}

\end{lemma}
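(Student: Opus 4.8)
The plan is to reduce the claim to the classical ``standard machine'' of measure theory. Define the auxiliary function $g\colon E\longmapsto\R$ by $g(x):=\mathbb{E}[\Psi(x,\cdot)]$, the integral over $\Om$ being well-defined for $\text{Law}(X)$-almost every $x$ by the integrability hypothesis, and the $\mE$-measurability of $g$ being guaranteed by a Tonelli-type argument exploiting the product structure $E\times\Om$. I would then show that the $\mD$-measurable random variable $\xi\longmapsto g(X(\xi))$ is a version of $\mathbb{E}[\Psi(X(\cdot),\cdot)|\mD]$; since $g(X(\xi))=\mathbb{E}[\Psi(X(\xi),\cdot)]$, this yields at once both the $\mD$-measurability of $f$ and the asserted formula. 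By the very definition of conditional expectation, it suffices to verify that $g(X)$ is $\mD$-measurable (immediate, as $g$ is $\mE$-measurable and $X$ is $\mD$-measurable) and that $\int_D\Psi(X,\cdot)\,d\mathbb{P}=\int_D g(X)\,d\mathbb{P}$ for every $D\in\mD$.

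The base case is the one in which $\Psi$ is a product indicator, $\Psi(x,\om)=\mathds{1}_A(x)\,\mathds{1}_B(\om)$ with $A\in\mE$ and $B\in\mG$. Here $\Psi(X(\om),\om)=\mathds{1}_{\{X\in A\}}(\om)\,\mathds{1}_B(\om)$, and since $\{X\in A\}\in\mD$ while $B\in\mG$ with $\mD,\mG$ independent, for any $D\in\mD$ one gets $\int_D\Psi(X,\cdot)\,d\mathbb{P}=\mathbb{P}(D\cap\{X\in A\}\cap B)=\mathbb{P}(D\cap\{X\in A\})\,\mathbb{P}(B)$. On the other hand $g(x)=\mathds{1}_A(x)\,\mathbb{P}(B)$, so $g(X)=\mathds{1}_{\{X\in A\}}\,\mathbb{P}(B)$ and $\int_D g(X)\,d\mathbb{P}=\mathbb{P}(B)\,\mathbb{P}(D\cap\{X\in A\})$; the two expressions coincide. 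This is precisely where the independence of $\mD$ and $\mG$ enters, replacing the conditioning on $\mG$-information by a plain expectation.

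It then remains to propagate the identity. First I would extend it by linearity to finite linear combinations of product indicators, which constitute a multiplicative class generating $\mE\otimes\mG$. A Dynkin / monotone-class argument upgrades the identity to $\Psi=\mathds{1}_C$ for an arbitrary $C\in\mE\otimes\mG$: the collection of sets $C$ for which both the measurability of $g$ and the integral identity hold forms a $\lambda$-system containing the $\pi$-system of measurable rectangles. With indicators in hand, linearity covers nonnegative $\mE\otimes\mG$-simple functions, monotone convergence extends the identity to every nonnegative $\mE\otimes\mG$-measurable $\Psi$, and finally the decomposition $\Psi=\Psi^+-\Psi^-$, together with the integrability hypothesis ensuring all the integrals involved are finite, delivers the general case.

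The main technical obstacle is not the independence step, which is transparent, but the careful bookkeeping around the map $g$: one must ensure that $x\longmapsto\mathbb{E}[\Psi(x,\cdot)]$ is genuinely $\mE$-measurable and finite almost everywhere before $g(X)$ can even be formed and recognized as $\mD$-measurable, and one must check that the family of ``good'' $\Psi$ is closed under the monotone limits invoked in the standard machine. Both points are resolved by the product-measurability of $\Psi$ and the integrability assumption, but they are exactly what makes the monotone-class scaffolding necessary in place of a direct computation.
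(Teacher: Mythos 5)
Your proof is correct. Note, however, that the paper does not prove this statement at all: it is recalled as a known result and attributed to Lemma 4.1 of \cite{Baldi}, so there is no in-paper argument to compare against. What you wrote is essentially the standard textbook proof of the freezing lemma --- base case on product indicators $\mathbf{1}_A(x)\mathbf{1}_B(\om)$ where independence of $\mD$ and $\mG$ factorizes $\mathbb{P}(D\cap\{X\in A\}\cap B)$, then a $\pi$--$\lambda$ argument, simple functions, monotone convergence, and $\Psi=\Psi^+-\Psi^-$ --- and you correctly flag the one genuinely delicate point, namely that the finiteness of $g(x)=\mathbb{E}[\Psi(x,\cdot)]$ for $\mathrm{Law}(X)$-a.e.\ $x$ must be extracted (via the nonnegative case applied to $|\Psi|$) before $g(X)$ can be identified as the conditional expectation.
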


If $B=\R^d$ for some $d\in\N$, we call characteristic function of $X$ the function 
\begin{equation*}\hat\mu(\th):=\mathbb{E}[e^{i\la\th,X\ra}]=\int_{\R^d}e^{i\la\th,x\ra}\mu(dx),\qquad\forall\th\in\R^d,\end{equation*}
where we are temporarily denoting by $\la\cdot,\cdot\ra$ the Euclidean scalar product on $\R^d$.
If in particular there exist a vector $a\in\R^d$ (the average) and a matrix $A\in\R^{d\times d}$ (the covariance matrix) such that $\hat\mu(\th)=e^{i\la\th,a\ra}\cdot e^{\la AA^*\th,\th\ra}$ (where $A^*$ is denoted to be the transposed of $A$), then $X$ is said to be a Gaussian random variable, and we say that $X\sim\mN(a,AA^*)$; if $a=0$, $X$ is said to be centered. A vector of random variables $(X_n)_{1\le n\le N}$ is said to be Gaussian if for any $\alpha_1,\dots,\alpha_N\in\R$, the sum $\sum_{n=1}^N\alpha_n X_n$ is a Gaussian random variable.

We point out the following important fact:
\begin{prop}\label{sum.of.Gaussians} Given two Gaussian independent random variables $X_1\sim\mN(a_1,A_1),\,X_2\sim\mN(a_2,A_2)$, we have that $\alpha_1 X_1+\alpha_2 X_2\sim\mN(\alpha_1 a_1+\alpha_2 a_2,\,\alpha_1^2A_1A_1^* + \alpha_2^2A_2A_2^*)$ 
\end{prop}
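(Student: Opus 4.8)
The plan is to argue entirely at the level of characteristic functions, exploiting that these determine the law uniquely and that independence turns the characteristic function of a sum into a product. So the whole statement will follow from a short formal computation together with the uniqueness theorem for characteristic functions.

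First I would record the two elementary transformation rules. For a scalar $\alpha$ and a random variable $X$ with law $\mu$, the law $\mu_\alpha$ of $\alpha X$ satisfies $\hat\mu_\alpha(\th)=\mathbb{E}[e^{i\la\th,\alpha X\ra}]=\hat\mu(\alpha\th)$, since $\la\th,\alpha X\ra=\la\alpha\th,X\ra$. Moreover, for two independent random variables $X_1,X_2$ one has the factorization $\mathbb{E}[e^{i\la\th,X_1+X_2\ra}]=\mathbb{E}[e^{i\la\th,X_1\ra}]\cdot\mathbb{E}[e^{i\la\th,X_2\ra}]$, because $e^{i\la\th,X_1\ra}$ and $e^{i\la\th,X_2\ra}$ are (bounded, hence integrable) measurable functions of the independent variables $X_1,X_2$, so they are independent and their expectations factor.

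Next I would combine these. Writing $\nu:=\mathrm{Law}(\alpha_1X_1+\alpha_2X_2)$ and using independence of $X_1,X_2$, I get $\hat\nu(\th)=\hat\mu_{X_1}(\alpha_1\th)\cdot\hat\mu_{X_2}(\alpha_2\th)$. Substituting the Gaussian form of each factor and using the degree-two homogeneity $\la A(\alpha\th),\alpha\th\ra=\alpha^2\la A\th,\th\ra$ in the quadratic exponent, the two linear exponents add up to $i\la\th,\alpha_1a_1+\alpha_2a_2\ra$ while the two quadratic exponents add up to $\la(\alpha_1^2A_1A_1^*+\alpha_2^2A_2A_2^*)\th,\th\ra$. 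Hence $\hat\nu(\th)=e^{i\la\th,\alpha_1a_1+\alpha_2a_2\ra}\cdot e^{\la(\alpha_1^2A_1A_1^*+\alpha_2^2A_2A_2^*)\th,\th\ra}$, which is exactly the characteristic function attached by the definition to $\mN(\alpha_1a_1+\alpha_2a_2,\ \alpha_1^2A_1A_1^*+\alpha_2^2A_2A_2^*)$.

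Finally, I would invoke injectivity of the characteristic function on probability measures to conclude that $\nu$ is precisely this Gaussian law, which is the assertion. I do not expect a genuine obstacle here: the only two points deserving care are that the quadratic part scales \emph{quadratically} under $\th\mapsto\alpha\th$, so that the coefficients $\alpha_j$ enter the covariance squared rather than linearly, and that one must genuinely appeal to the uniqueness theorem for characteristic functions rather than merely matching functional forms. If one wished to avoid the uniqueness theorem, an alternative is to check directly from the vector definition that every scalar linear functional of $\alpha_1X_1+\alpha_2X_2$ is a one-dimensional Gaussian, but the characteristic-function route is the most economical and matches the parameters immediately.
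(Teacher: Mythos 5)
Your proof is correct, and it is the natural argument given that the paper itself \emph{defines} Gaussian random variables through their characteristic functions; the paper states this proposition without any proof, so there is nothing to compare against, but your route (factorization of the characteristic function under independence, the scaling rule $\hat\mu_{\alpha X}(\th)=\hat\mu_X(\alpha\th)$, and the uniqueness theorem) is the standard and most economical one. The only cosmetic remark is that the limit object must be recognized as Gaussian in the sense of the paper's definition, i.e.\ one should note that $\alpha_1^2A_1A_1^*+\alpha_2^2A_2A_2^*$, being a sum of positive semidefinite matrices, is itself of the form $BB^*$ for some matrix $B$; this is immediate but worth a half-sentence.
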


\medskip

Now, if one considers any measurable random function $X\colon\,\Om\times[0,T]\longmapsto B$, then $X$ is said to be a stochastic process, and it is said to be continuous if for all $\om\in\Om$, $X(\om,\cdot)\in C([0,T];B)$. More, if one defines a filtration $(\Om,\mF=\mF_0,(\mF_t)_{t\in[0,T]}, \mathbb{P})$ and for all $t\in[0,T]$ one has that $X(t)$ is $\mF_t-$measurable, we say that the stochastic process $X$ is $(\mF_t)_{t}-$adapted. 

\medskip

If $B=\R^d$ for any $d\in\N$ and $X$ is a stochastic process such that for all $t\in[0,T]$, $X(\cdot,t)$ is a Gaussian random variable, then $X$ is said to be a Gaussian process if for any $N\in\N$ and times $t_1,\dots,t_N\in[0,T]$, one has that the vector of random variables $(X(t_n))_{1\le n\le N}$ is Gaussian. If such a process is continuous, then the mean function $a(t):=\mathbb{E}[X(t)]$ and the covariance function $R(t,s):=\mathbb{E}[(X(t)-a(t))(X(s)-a(s))]$ are still continuous, and the latter is also semipositive definite. In particular, if $a(t)=0$ and $R(t,s)=\min\{t,s\}$ for all $t,s\in[0,T]$, we say that $X$ is said to be a Brownian motion; in this case, we will refer to it with the notation $W$.

\medskip

A $(\mF_t)_t-$adapted process $M$ with $B=\R^d$ is said to be a martingale if $M_t$ is integrable for all $t\in[0,T]$ and $\mathbb{E}[M_t|\mF_s]=M_s$ for all $0\le s\le t \le T$. Any Brownian motion is an example of martingale.

Thanks to them, one can build $\hat{\text{I}}$to stochastic integrals with respect to martingales with finite quadratic variation as follows. We call elementary process some of the kind $X(t):=\sum_{n=0}^{N-1} X_i\cdot 1_{[t_n,t_{n+1})}$, where $\{t_i\colon\,0\le i\le N\}$ is a partition of $[0,T]$ and $1_{[t_n,t_{n+1})}$ is the indicator function over $[t_n,t_{n+1})$. Its $\hat{\text{I}}$to stochastic integral with respect to a martingale $M$ is defined to be
\begin{equation*}\int_0^T X(s)\cdot dM(s):=\sum_{n=0}^{N-1} X_n\cdot[M(t_{n+1}) - M(t_n)].
\end{equation*}
By the martingale property of $M$, one gets that if such elementary process $X$ is centered, then
\begin{equation}\label{ito.isometry}\mathbb{E}\Big[\Big|\int_0^T X(s)\cdot dM(s)\Big|^2\Big]=\mathbb{E}\Big[\int_0^T |X(s)|^2\,d\la M\ra(s) \Big],
\end{equation}
where $\la M\ra(s)$ is the quadratic variation of $M$; if $M=W$ is the Brownian motion, then $\la W\ra(s)=s$. 

For generic processes $X\in M^2([0,T];M)$, that is, those satisfying $\mathbb{E}[\int_0^T|X(s)|^2\,d\la M\ra(s)]<\infty$, one can show that they can be approximated by elementary processes in the sense that there exists a sequence $X_n$ elementary such that
\begin{equation*}\lim_{n\to\infty}\mathbb{E}\Big[\int_0^T|X(s) - X_n(s)|^2\,d\la M\ra(s) \Big]=0:
\end{equation*}
then, one can define the $\hat{\text{I}}$to stochastic integral of $X$ with respect to $M$ as the limit of stochastic integrals for $X_n$, and the property \eqref{ito.isometry} still holds for $X$. One can then show that still, the $\hat{\text{I}}$to stochastic integral of $M^2([0,T];M)$ processes is a continuous $(\mF_t)_t-$adapted martingale with zero mean.
We recall here the Burkholder-Davis-Gundy Inequality which will be used massively for our estimates:
\begin{prop}[Burkholder-Davis-Gundy Inequality]\label{prop:Burkholder.Davis.Gundy.Inequality} Let $p>0$, let $X\in M^2([0,T];M)$ a square-integrable process with respect to a martingale $M$ with finite quadratic variation. Then, there exist two constants $c(p),C(p)>0$, depending only on $p$, such that
\begin{equation}\label{Doob.maximal.inequality} c(p)\cdot\mathbb{E}\Big[\int_0^T |X(s)|^2\,d\la M\ra(s) \Big]^{\frac{p}{2}}\le\mathbb{E}\Big[\sup_{t\in[0,T]}\Big|\int_0^t X(s)\cdot dM(s) \Big|^p \Big]\le C(p)\cdot\mathbb{E}\Big[\int_0^T |X(s)|^2\,d\la M\ra(s) \Big]^{\frac{p}{2}}.
\end{equation}
\end{prop}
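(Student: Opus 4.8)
The plan is to reduce \eqref{Doob.maximal.inequality} to a universal inequality for a single martingale and then prove that by the good-$\lambda$ method, which has the advantage of handling all exponents $p>0$ at once. Set $N_t:=\int_0^t X(s)\cdot dM(s)$; by the construction recalled above this is a continuous, square-integrable, zero-mean $(\mF_t)_t$-martingale with $N_0=0$ and quadratic variation $\la N\ra_t=\int_0^t|X(s)|^2\,d\la M\ra(s)$, so that \eqref{Doob.maximal.inequality} is exactly the assertion $c(p)\,\mathbb{E}[\la N\ra_T^{p/2}]\le\mathbb{E}[(N_T^*)^p]\le C(p)\,\mathbb{E}[\la N\ra_T^{p/2}]$ with $N_T^*:=\sup_{t\in[0,T]}|N_t|$. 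For $p=2$ the statement is \eqref{ito.isometry} together with Doob's maximal inequality, and for $p\ge2$ the upper bound follows rapidly by applying It\^o's formula to the $C^2$ map $x\mapsto|x|^p$, taking expectations, and controlling the resulting drift term by Doob's $L^p$-inequality and H\"older. This route breaks down precisely in the subquadratic regime $0<p<2$ that we actually need (recall that from square-integrable data we must extract moments of order $q<p=2$), since $|x|^p$ is no longer twice differentiable and Doob's inequality degenerates as $p\downarrow1$; this is where the genuine difficulty lies.

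To reach every $p>0$ uniformly, I would establish the two good-$\lambda$ inequalities. The first asserts that for all $\beta>1$, $\delta>0$ and $\lambda>0$,
\begin{equation*}\mathbb{P}\big(N_T^*>\beta\lambda,\ \la N\ra_T^{1/2}\le\delta\lambda\big)\le\frac{\delta^2}{(\beta-1)^2}\,\mathbb{P}\big(N_T^*>\lambda\big),\end{equation*}
while the second is its analogue with the roles of $N_T^*$ and $\la N\ra_T^{1/2}$ interchanged. I would prove the first by a stopping-time argument: introducing $\tau_1:=\inf\{t:|N_t|>\lambda\}$, $\tau_2:=\inf\{t:|N_t|>\beta\lambda\}$ and $\tau_3:=\inf\{t:\la N\ra_t>\delta^2\lambda^2\}$, the increment $Z:=N^{\tau_2\wedge\tau_3}-N^{\tau_1\wedge\tau_3}$ is a martingale that vanishes off $\{\tau_1\le T\}=\{N_T^*>\lambda\}$, satisfies $|Z_T|\ge(\beta-1)\lambda$ on the event in the left-hand side, and has $\la Z\ra_T\le\delta^2\lambda^2$ by the construction of $\tau_3$. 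Conditioning on $\mF_{\tau_1}$, the It\^o isometry \eqref{ito.isometry} in its conditional form gives $\mathbb{E}[Z_T^2\mid\mF_{\tau_1}]\le\delta^2\lambda^2$ on $\{\tau_1\le T\}$, and Chebyshev's inequality for $Z_T$ then yields the stated bound, the factor $\mathbb{P}(N_T^*>\lambda)$ arising exactly from the conditioning.

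Finally I would integrate against the measure $p\lambda^{p-1}\,d\lambda$ on $(0,\infty)$. Splitting $\{N_T^*>\beta\lambda\}$ according to whether $\la N\ra_T^{1/2}\le\delta\lambda$ or not, and using the layer-cake formula $\mathbb{E}[(N_T^*)^p]=p\int_0^\infty\lambda^{p-1}\mathbb{P}(N_T^*>\lambda)\,d\lambda$ together with its analogue for $\la N\ra_T^{p/2}$, the first good-$\lambda$ inequality produces
\begin{equation*}\beta^{-p}\,\mathbb{E}[(N_T^*)^p]\le\frac{\delta^2}{(\beta-1)^2}\,\mathbb{E}[(N_T^*)^p]+\delta^{-p}\,\mathbb{E}[\la N\ra_T^{p/2}];\end{equation*}
choosing $\delta$ small enough that $\beta^{-p}-\delta^2(\beta-1)^{-2}>0$ lets me absorb the first term on the right and read off $C(p)$, while the second good-$\lambda$ inequality yields $c(p)$ symmetrically. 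The only point needing care is that this absorption presupposes $\mathbb{E}[(N_T^*)^p]<\infty$; I would secure this by first stopping at $\sigma_n:=\inf\{t:|N_t|+\la N\ra_t\ge n\}\wedge T$, for which every quantity is bounded, proving the estimates for $N^{\sigma_n}$, and then passing to the limit $n\to\infty$ by monotone convergence. The main obstacle, as flagged above, is honestly the range $0<p<2$: it is exactly to circumvent the failure of the It\^o--Doob argument there---and thereby to bound moments $q<2$ by square-integrable data, as the mean-field estimates require---that the full force of Proposition \ref{prop:Burkholder.Davis.Gundy.Inequality}, rather than Doob's inequality alone, is indispensable.
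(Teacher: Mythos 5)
The paper does not prove this proposition at all: it is recalled in the preliminaries as a classical fact (the Burkholder--Davis--Gundy inequality), to be used as a black box in the later Gr\"onwall-type estimates, with the standard stochastic-calculus references (e.g.\ Revuz--Yor, Baldi) serving as implicit sources. Your proposal supplies an actual proof, and it is the correct standard one: the good-$\lambda$ method with the stopping times $\tau_1,\tau_2,\tau_3$, the conditional It\^o isometry plus Chebyshev to get the distributional inequality, integration against $p\lambda^{p-1}\,d\lambda$, absorption after choosing $\delta$ small, and localization via $\sigma_n$ to justify the absorption. This is essentially the argument in Revuz--Yor, and your diagnosis of why it is needed here --- the regime $0<p<2$, where the It\^o-formula-plus-Doob route fails --- matches exactly the paper's own stated reason for invoking BDG instead of Doob's maximal inequality. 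Two small caveats. First, your argument (in particular the identities $|N_{\tau_1}|=\lambda$, $|N_{\tau_2}|=\beta\lambda$ and the bound $\la Z\ra_T\le\delta^2\lambda^2$) uses continuity of $N=\int X\,dM$ and of $\la N\ra$; this is automatic when $M$ is a Brownian motion, which is the only case the paper ever uses, but the proposition as stated allows a general martingale with finite quadratic variation, for which BDG with the predictable bracket and $p<2$ can genuinely fail in the presence of jumps --- so you are really proving the (sufficient) continuous case. Second, the ``symmetric'' good-$\lambda$ inequality for $\la N\ra_T^{1/2}$ is not literally the same statement with roles swapped: one stops on level sets of $\la N\ra$ and applies Chebyshev to $\la Z\ra_T$ using $\mathbb{E}[\la Z\ra_T\mid\mF_{\rho_1}]=\mathbb{E}[Z_T^2\mid\mF_{\rho_1}]\le 4\delta^2\lambda^2$, which changes the constant; this is routine but worth writing out. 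Neither point affects the validity of the proof for the paper's purposes.
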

If $M=W$ is the Brownian motion, then we will refer to square-integrable processes space simply as $M^2([0,T])$.

\medskip

Given any continuous process $X$ and $F_1,F_2\in M^2([0,T])$, we say that $X$ has a stochastic differential $dX(t)=F_1(t)dt + F_2(t)\cdot dW(t)$ if
\begin{equation*}X(t) = X(0) +  \int_0^T F_1(t)\,dt + \int_0^T F_2(t)\cdot dW(t).
\end{equation*}
We have the $\hat{\text{I}}$to Formula:
\begin{prop}[$\hat{\text{I}}$to Formula]\label{prop:Ito.formula}
Let $W=(W_1,\dots,W_d)$ be a $d-$dimensional Brownian motion, let $F\in C^{1,2}(\R_+\times\R^d;\R)$. Then, if $dX(t)=F_1(t)dt +  F_2(t)\cdot dW(t)$, one has
\begin{align}d(F(t,X(t)))=&\pa_tF(t,X(t))dt + \grad F(t,X(t))\cdot dX(t) +\frac12\text{tr}[H(F)F_2 F_2^*](t,X(t))dt, \label{Ito.formula}
\end{align}
where $\text{tr}$ is the trace operator and $H(F)$ is the Hessian matrix of $F$.
\end{prop}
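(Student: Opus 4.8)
The plan is to prove \eqref{Ito.formula} by the classical argument of partitioning $[0,t]$, Taylor-expanding $F$, and passing to the limit as the mesh vanishes, after a preliminary localization that reduces everything to a bounded regime. Concretely, for $R>0$ I would introduce the stopping time
\begin{equation*}\t_R:=\inf\Big\{t\ge0\colon\,|X(t)|\ge R\Big\}\wedge\inf\Big\{t\ge0\colon\,\int_0^t(|F_1(s)|+|F_2(s)|^2)\,ds\ge R\Big\},\end{equation*}
so that on $[0,\t_R]$ the path $X$ stays in a fixed compact set and the integrands lie in $M^2([0,T])$ with controlled norms. Since $F\in C^{1,2}$, the functions $F$, $\pa_tF$, $\grad F$ and $H(F)$ are then bounded and uniformly continuous on the relevant compact region. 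It suffices to establish \eqref{Ito.formula} for the stopped processes and let $R\to\infty$, using $\t_R\uparrow\infty$ almost surely together with the continuity of the $\hat{\text{I}}$to integral.

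Next, fix $t$ and a partition $0=t_0<\cdots<t_n=t$ of mesh $\d_n\to0$, and write the telescoping identity $F(t,X(t))-F(0,X(0))=\sum_k[F(t_{k+1},X(t_{k+1}))-F(t_k,X(t_k))]$. Expanding $F$ to second order in the spatial variable and to first order in time, and evaluating all coefficients at the left endpoint $(t_k,X(t_k))$, I obtain, with $\D t_k:=t_{k+1}-t_k$ and $\D X_k:=X(t_{k+1})-X(t_k)$,
\begin{align*}F(t_{k+1},X(t_{k+1}))-F(t_k,X(t_k))=&\,\pa_tF(t_k,X(t_k))\D t_k+\grad F(t_k,X(t_k))\cdot\D X_k\\&+\tfrac12\,\D X_k^*\,H(F)(t_k,X(t_k))\,\D X_k+r_k,\end{align*}
where $r_k$ collects the Taylor remainders and the errors from the left-endpoint replacements. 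Summing, the first family tends to $\int_0^t\pa_tF\,ds$, while the second, because $s\mapsto\grad F(s,X(s))$ is continuous, adapted, and hence in $M^2$ on the localized region, converges by the very construction of the $\hat{\text{I}}$to integral as a limit of elementary-process integrals to $\int_0^t\grad F\cdot dX=\int_0^t\grad F\cdot F_1\,ds+\int_0^t\grad F\cdot F_2\,dW$.

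The crux is the quadratic sum together with the remainders $r_k$. Writing $\D X_k=F_1(t_k)\D t_k+F_2(t_k)\D W_k$, the contributions of $(\D t_k)^2$ and of the cross terms $\D t_k\,\D W_k$ vanish in $L^2$ (the latter by \eqref{ito.isometry}, using $\sum_k(\D t_k)^2\le\d_n\,t$), so only $F_2(t_k)\D W_k\D W_k^*F_2(t_k)^*$ survives; I would then replace $\D W_k\D W_k^*$ by $\D t_k\,I$. Since the increments are independent of $\mF_{t_k}$ with $\mathbb{E}[(\D W_k)^i(\D W_k)^j\mid\mF_{t_k}]=\d_{ij}\D t_k$, the discrepancies are mean-zero and orthogonal, so the total error has squared $L^2$-norm bounded by $\sum_k\mathbb{E}[((\D W_k)^i(\D W_k)^j-\d_{ij}\D t_k)^2]=O\big(\sum_k(\D t_k)^2\big)=O(\d_n\,t)\to0$, yielding $\tfrac12\int_0^t\text{tr}[H(F)F_2F_2^*]\,ds$ and hence \eqref{Ito.formula}. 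The remainders $r_k$ — in particular the discrepancy between evaluating $\grad F$ at $t_{k+1}$ and at $t_k$ — are likewise controlled in $L^2$ through the orthogonality of the increments rather than pathwise, since $\sum_k|\D X_k|$ diverges in the limit whereas $\sum_k|\D X_k|^2$ stays bounded, and the uniform continuity of $H(F)$ on the compact region disposes of the second-order term. The main obstacle is precisely this quadratic-variation convergence: it is where the defining property $\la W\ra(s)=s$ of Brownian motion enters, and where the second-moment estimate on the increments, supplied in spirit by the $\hat{\text{I}}$to isometry \eqref{ito.isometry}, is indispensable and cannot be replaced by any almost-sure bound.
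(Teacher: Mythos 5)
The paper does not prove this proposition: it is recalled in the preliminaries as a classical fact (with the standard references \cite{Baldi, Oksendal, Pavliotis, Revuz.Yor} cited for background on stochastic calculus), so there is no in-paper argument to compare against. Your sketch is the standard textbook proof --- localization by stopping times, telescoping plus Taylor expansion at the left endpoints, identification of the quadratic-variation term through the $L^2$-orthogonality of $(\D W_k)^i(\D W_k)^j-\d_{ij}\D t_k$ --- and it is essentially sound. One step deserves more care than you give it: writing $\D X_k=F_1(t_k)\D t_k+F_2(t_k)\D W_k$ is exact only when $F_1,F_2$ are elementary (piecewise constant on the partition); for general $F_1\in L^1$, $F_2\in M^2([0,T])$ the increment is $\int_{t_k}^{t_{k+1}}F_1\,ds+\int_{t_k}^{t_{k+1}}F_2\,dW$, and the usual remedy is to first establish \eqref{Ito.formula} for elementary coefficients and then pass to the limit using the isometry \eqref{ito.isometry}, rather than to absorb this discrepancy into the remainders $r_k$. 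Relatedly, since $F$ is only $C^{1,2}$ you cannot Taylor-expand to second order in time; the time increment must be handled to first order with a $o(\D t_k)$ remainder via uniform continuity of $\pa_tF$ on the localized compact set. With those two adjustments the argument is complete.
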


\subsection{Wasserstein spaces}

In what follows, we will introduce Wasserstein spaces for measures on complete metric spaces and we will recall some properties and estimates that will be used in the following sections.

Now, for any $p\ge1$, let us denote by $\mW_p(B)$ the $p-$Wasserstein space over a complete metric space $(B,d_B)$; that is, the space of Borel probability measures on $B$ with finite $p-$th momentum, namely
\begin{equation}\label{p.Wasserstein.space}\mW_p(B):=\{\mu\,|\,\mu\,\text{is a probability measure},\,M_p^p(\mu;x_0):=\int_B d_B^p(x,x_0)\,d\mu(x)<\infty\};
\end{equation}
we recall that by Triangle Inequality, $\mW_p(B)$ does not actually depend on the choice of $x_0\in B$; then, if $B$ is a Banach space, then we set $x_0:=0$. We recall also that the Wasserstein spaces are metrizable, precisely we have the following fact.

For any $\mu, \nu\in\mW_p(B)$, a probability Borel measure $\g$ on $B\times B$ is said to be a coupling of $\mu,\nu$ if for any Borel set $A\subset B$, $\g(A\times B)=\mu(A)$ and $\g(B\times A)=\nu(A)$. Denoting by $\Pi(\mu,\nu)$ the set of all couplings of $\mu,\nu$, we define the $p-$Wasserstein metric as the one such that
\begin{equation}\label{p.Wasserstein.metric}\mW_p^p(\mu,\nu):=\inf_{\g\in\Pi(\mu,\nu)}\int_{B\times B}d_B^p(x,y)\,d\g
\end{equation}
It is known that the infimum is reached by the so called optimal coupling; see for instance Theorem 4.1, \cite{Villani}. Equivalently, the $p-$Wasserstein metric turns out to be
\begin{equation}\label{p.Wasserstein.metric.1}\mW_p^p(\mu,\nu)=\inf\{\mathbb{E}[d_B^p(X,Y)]\colon\,\text{Law}(X)=\mu,\,\text{Law}(Y)=\nu \}\end{equation}

A special case is covered when $p=1$, for which the metric $\mW_1(\mu,\nu)$ can be characterized through Kantorovich-Rubinstein duality:
\begin{equation}\label{Kantorovich.Rubinstein.duality}\mW_1(\mu,\nu)=\sup\Big\{\Big|\int_B\ph\,d\mu - \int_B\ph\,d\nu\Big|\,\colon\,\ph\in\text{Lip(B;$\R$)},[\ph]_{\text{Lip}}\le1\Big\},
\end{equation}
where $\displaystyle[\ph]_{\text{Lip}}:=\sup_{x\ne y}\frac{|\ph(y)-\ph(x)|}{d_B(y,x)}$.
Moreover, one has that for all $p\le q$ and for all $\mu,\nu\in\mW_q(B)$, \begin{equation}\label{Wasserstein.sort}\mW_p(\mu,\nu)\le\mW_q(\mu,\nu)\end{equation}
which implies that $\mW_1$ is the weakest Wasserstein distance.

Furthermore, if we have two collections of $N$ random variables, namely $(X_n)_{n=1}^N,(Y_n)_{n=1}^N\subset B$, denoting by $\mu_N(Z):=\frac{1}{N}\sum_{n=1}^N\delta_{Z_n}$ the related empirical measure of a generic collection $Z=(Z_n)_{n=1}^N$ of random variables, we get that
\begin{align}&M_p(\mu_N(X))\le\Big(\frac{1}{N}\sum_{n=1}^N d_B^p(X_n,0) \Big)^{\frac{1}{p}}\le\max_{1\le n\le N}d_B(X_n,0) \label{Mp.norm.empirical.measure}
\\&\mW_p(\mu_N(X),\mu_N(Y))\le\Big(\frac{1}{N}\sum_{n=1}^N d_B^p(X_n,Y_n)\Big)^{\frac{1}{p}}\le\max_{1\le n\le N}d_B(X_n,Y_n);\label{Wasserstein.empirical.measure}
\end{align}
recalling the introduction, the empirical measures correspond to the herd statistical distributions.

If $d\ge1$ and $(B,d_B):=(C([0,T];\R^d),\|\cdot\|_\infty)$, for all $\mu\in\mW_p(B)$ we define the distance
\begin{equation}\label{d.continuous.measure.distance}d(\mu,\nu):=\sup_{t\in[0,T]}\mW_p(\mu(t),\nu(t)).
\end{equation}

We now observe some useful estimates we will use in the next sections.

\begin{remark}Let $(B,\|\cdot\|_B)$ be a Banach space and $f\colon\,B\longmapsto\R$ be a function such that there exists $L>0$ for which for all $x_1,x_2\in B$
\begin{equation*}|f(x_2)-f(x_1)|\le L\|x_2 - x_1\|_B.
\end{equation*}
Then, if we set for all $x\in B$
\begin{equation*}f*\mu_t(x):=\int_B f(x-y)\,d\mu_t(y),
\end{equation*}
then for all $x_1,x_2\in B$, $f*\mu_t$ satisfies
\begin{equation}|f*\mu_t(x_2) - f*\mu_t(x_1)|\le L\|x_2-x_1\|_B. \label{convoluted.function.lipschitz}
\end{equation}
\end{remark}

\begin{remark}In notations and assumptions of point $(i)$, for any $\mu\in C([0,T];\mW_1(\R^d))$ let us consider the linear operator $F\colon\,C([0,T];\mW_1(\R^d))\longmapsto C([0,T];\R^d)$
\begin{equation*}(F(\mu))(t):=f*\mu_t.\end{equation*}
Then, for all $\mu_1,\mu_2\in C([0,T];\mW_1(\R^d))$, $F$ satisfies
\begin{align}&|F(\mu_2)(t) - F(\mu_1)(t)|\le L\cdot\mW_1(\mu_2(t),\mu_1(t)), \label{convoluted.operator.lipschitz.pointwise}
\\&\|F(\mu_2)-F(\mu_1)\|_\g\le L\cdot d_{1,\g}(\mu_2,\mu_1)
\label{convoluted.operator.lipschitz}\end{align}
where we considered
\begin{align}
&\|f_2-f_1\|_\g:=\sup_{t\in[0,T]}e^{-\g t}|f_2(t)-f_1(t)|,\qquad\forall f_1,f_2\in C([0,T];\R^d), \label{decay.norm.continuous.function}
\\&d_{p,\g}(\mu_2,\mu_1):=\sup_{t\in[0,T]} e^{-\g t}\mW_p(\mu_2(t),\mu_1(t)),\qquad\forall\mu_1,\mu_2\in C([0,T];\mW_p(\R^d)),\,\forall p\ge1 \label{decay.distance.measures}
\end{align}
We remark here that \eqref{decay.norm.continuous.function} is a complete norm for $C([0,T];\R^d)$, and similarly \eqref{decay.distance.measures} is a complete metric for $C([0,T];\mW_p(\R^d))$.

\end{remark}

To achieve the mean-field limit (Theorem \ref{thm:propagation.of.chaos}), we provide here a characterization of convergence in Wasserstein spaces, and the classical Law of Large Numbers and the quantitative one proved in \cite{Fournier.Guillin}:

\begin{prop}\label{prop:technical.estimates}

Let $(B,d_B)$ a complete metric space. Then, the following facts hold.

\medskip

$(i)$(Theorem 6.9, \cite{Carmona.Delarue}) Let $p\ge1$, let $(\mu_n)_{n\in\N},\,\mu\in\mW_p(B)$ be probability measures on $B$. Then, one has that $\lim_{n\to\infty}\mW_p(\mu,\mu_n)=0$ if and only if for all $\ph\in C_b(B;\R)$, $\la\mu_n,\ph\ra\to\la\mu,\ph\ra$ and $\la\mu_n,d_B^p(x,x_0)\ra\to\la\mu,d_B^p(x,x_0)\ra$ as $n\to\infty$. 

\medskip

$(ii)$(Strong Law of Large Numbers) Let $(X_n)$ be a sequence of independent, identically distributed random variables on a probability space $(\Om,\mF,\mathbb{P})$ and taking values on $B$, such that for all $n\in\N$, $\mathbb{E}[X_n]=m$. Let us call $X_N:=\frac{1}{N}\sum_{n=1}^N X_n$. Then, $\mathbb{P}-$almost surely
\begin{equation}\label{strong.law.of.large.numbers}\lim_{N\to\infty}X_N=m.
\end{equation}

\medskip

$(iii)$(Theorem 1, \cite{Fournier.Guillin}) Let $q>0$, let $\mu\in\mW_q(\R^d)$ such that there exists some $p>q$ for which $M_p(\mu)<+\infty$. Let $(X_n)_{n\in\N}$ be a sequence of independent, identically distributed random variables with distribution $\mu$. For any $N\in\N$, let $\mu_N:=\frac{1}{N}\sum_{n=1}^N\delta_{X_n}$ be the associated empirical measure. Then,
\begin{equation}\label{law.of.large.numbers}\mathbb{E}[(\mW_q^q(\mu_N,\mu))]\lesssim_{q,d,p} M_p^{\frac{q}{p}}(\mu)\cdot\begin{cases}N^{-\frac12} + N^{-\frac{p-q}{p}}\qquad\qquad\qquad\quad\text{if}\quad q>\frac{d}{2},\,p\ne2q,
\\N^{-\frac12}\log(1+N) + N^{-\frac{p-q}{p}}\qquad\,\text{if}\quad q=\frac{d}{2},\,p\ne2q,
\\N^{-\frac{q}{d}} + N^{-\frac{p-q}{p}}\qquad\qquad\qquad\quad\text{if}\quad q\in(0,\frac{d}{2}),\,p\ne\frac{d}{d-q}.
\end{cases}
\end{equation}
    
\end{prop}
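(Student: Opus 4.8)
All three assertions are established results, so the plan is to recall the governing ideas and indicate the precise references rather than to reprove them in detail.

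For part $(i)$, the plan is to argue the two implications separately. Assume first $\mW_p(\mu,\mu_n)\to0$. Since $M_p(\mu)=\mW_p(\mu,\delta_{x_0})$, the reverse triangle inequality gives $|M_p(\mu_n)-M_p(\mu)|\le\mW_p(\mu_n,\mu)\to0$, which is exactly the convergence $\la\mu_n,d_B^p(\cdot,x_0)\ra\to\la\mu,d_B^p(\cdot,x_0)\ra$; and combining $\mW_1\le\mW_p$ from \eqref{Wasserstein.sort} with the Kantorovich-Rubinstein duality \eqref{Kantorovich.Rubinstein.duality} yields $\la\mu_n,\ph\ra\to\la\mu,\ph\ra$ for every bounded Lipschitz $\ph$, which a truncation-and-density argument upgrades to all $\ph\in C_b(B;\R)$. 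For the converse, weak convergence together with convergence of the $p$-th moments forces uniform integrability of $d_B^p(\cdot,x_0)$ under the $\mu_n$, whence $\mW_p$-convergence is recovered through an optimal-coupling (or Skorokhod-representation) argument. This is Theorem 6.9 in \cite{Carmona.Delarue}.

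For part $(ii)$, the plan is to reduce to the scalar Strong Law of Large Numbers. When $B$ is a separable Banach space one tests $X_N-m$ against a countable separating family of continuous linear functionals, applies the classical Kolmogorov SLLN to each, and then uses separability to pass from coordinatewise almost-sure convergence to almost-sure convergence in norm; in the finite-dimensional case that we actually use this is immediate.

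Part $(iii)$ is the deepest of the three and is the key quantitative input for the mean-field limit; here the plan is simply to invoke Theorem 1 of \cite{Fournier.Guillin}, whose proof I would not reproduce. That proof rests on a dyadic decomposition of $\R^d$ into cubes, a concentration estimate for the empirical frequencies at each scale, and an optimization of the resulting multiscale bound, the three regimes in \eqref{law.of.large.numbers} reflecting whether $d$ lies below, at, or above the threshold $2q$. The one point requiring care, and the only genuine obstacle in applying the proposition later, is bookkeeping of the moment hypothesis: one must check that the assumption $M_p(\mu)<\infty$ with some $p>q$ is exactly what produces the correction term $N^{-(p-q)/p}$, in precisely the form in which \eqref{law.of.large.numbers} will be used in Theorem \ref{thm:propagation.of.chaos}.
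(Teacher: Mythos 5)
The paper gives no proof of this proposition at all: all three parts are quoted directly from the cited references (Theorem 6.9 of \cite{Carmona.Delarue}, the classical SLLN, and Theorem 1 of \cite{Fournier.Guillin}). Your proposal does the same thing, deferring to those references, and the standard proof sketches you add for parts $(i)$ and $(ii)$ are accurate, so this matches the paper's treatment.
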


\begin{remark}\label{remark:non.quantitative.law.large.numbers}   In the notations of Proposition \ref{prop:technical.estimates}, if $\mu$ is the law of a random variable $X\in L^p(\Om;B)$ and $(X_n)$ is a sequence of its independent copies, the point $(ii)$ implies together with the point $(i)$ that for all $p\ge1$,
\begin{equation}\label{strong.law.large.numbers.wasserstein.1}\lim_{N\to\infty}\mW_p(\mu,\mu_N)=0.
\end{equation}
Now, let us denote by $\delta_{x_0}$ the Dirac Delta concentrated at $x_0\in B$. We observe that 
\begin{equation*}\mW_p^p(\mu,\mu_N)\lesssim_p\mW_p^p(\mu,\delta_{x_0}) + \mW_p^p(\delta_{x_0},\mu_N)\lesssim_p\underbrace{\mathbb{E}[d_B^p(X,x_0)]}_{=:(I)} + \underbrace{\sup_{N\in\N}\frac{1}{N^p}\sum_{n=1}^N d_B^p(X_n,x_0)}_{=:(II)};
\end{equation*}
since $(I)$ is bounded by assumption and $(II)$ is $L^1-$bounded by point $(i)$ of Proposition \ref{prop:technical.estimates}, we get by Lebesgue Dominated Convergence Theorem that
\begin{equation}\label{strong.law.large.numbers.wasserstein.2}\lim_{N\to\infty}\mathbb{E}[\mW_p^p(\mu,\mu_N)]=0.
\end{equation}

As for \eqref{law.of.large.numbers}, there is no quantitative estimate of convergence when $q=p$: indeed, in \cite{Fournier.Guillin} there is a counterexample for this, see Example $(c)$, page 2.    
\end{remark}

\subsection{Assumptions on interactions, noises and controls}

In the next sections, we will always consider the following assumptions.

\begin{ass}\label{assumptions} Let $(\Om,\mF,(\mF_t)_{t\in[0,T]},\mathbb{P})$ be a filtration over a complete probability space $(\Om,\mF,\mathbb{P})$.
Given $p\ge2$, $d\in\{1,2,3\}$, $N,M\in\N$, $L>0$ and $T>0$, let us denote by $x:=(x_1,\dots,x_d)\in\R^d$ a generic vector in $\R^d$ and, with a small abuse of notations, by $\displaystyle|x|:=\max_{1\le i\le d}|x_i|$ (and we extend this definition to whatever $\R^k$, for any $k\in\N$).

\medskip

$(i)$(Interaction kernels) Given $i\in\{1,2\}$, the kernel interactions 
\begin{equation*}H_i,K_i\colon\,\R^d\longmapsto\R^d\end{equation*}
are such that for any $x,y\in\R^d$, 
\begin{equation}\label{K.L.Lipschitz}\max_{i=1,2}\,\{|H_i(y)-H_i(x)|,\,|K_i(y)-K_i(x)|\}\le L|y-x|.
\end{equation}

\medskip

$(ii)$(Idiosyncratic noise function) The idiosyncratic noise function 
\begin{equation*}\s_*^{(i)}\colon\,[0,T]\times(\R^d)^M\times\R^d\times\mW_1(\R^d)\longmapsto\R^{d\times d}\end{equation*}
in equations \eqref{SDE.finite}, \eqref{SDE.mean.field} satisfies
\begin{align}&\s_*^{(i)}=\s_*^{(i)}(t,y,x,\nu),\label{idiosyncratic.noise.shape.mean.field}
\\&\s_*^{(i)}\,\,\text{is continuous}, \label{idiosyncratic.noise.is.continuous}
\\&|\s_*^{(i)}(t,y_2,x_2,\nu_2) - \s_*^{(i)}(t,y_1,x_1,\nu_1)|\le L\cdot[|y_2-y_1| + |x_2-x_1| + \mW_1(\nu_2,\nu_1)], \label{idiosyncratic.noise.is.lipschitz}
\end{align}

\medskip

$(iii)$(Common noise function) The common noise function \begin{equation*}\s_*^{(c)}\colon\,[0,T]\times(\R^d)^M\times\R^d\times\mW_1(\R^d)\longmapsto\R^{d\times d}\end{equation*}
in equations \eqref{SDE.finite}, \eqref{SDE.mean.field} satisfies
\begin{align}&\s_*^{(c)}=\s_*^{(c)}(t,y,x,\nu),\label{common.noise.shape.mean.field}
\\&\s_*^{(i)}\,\,\text{is continuous}, \label{common.noise.is.continuous}
\\&|\s_*^{(i)}(t,y_2,x_2,\nu_2) - \s_*^{(i)}(t,y_1,x_1,\nu_1)|\le L\cdot[|y_2-y_1| + |x_2-x_1| + \mW_1(\nu_2,\nu_1)], \label{common.noise.is.lipschitz}
\end{align}

\medskip

$(iv)$(Control function) The control function 
\begin{equation*}u\colon\,[0,T]\times(\R^d)^M\times\mW_1(\R^d)\times\longmapsto L^p(\Om;\R^M)\end{equation*}
is a $\mF_t-$adapted, progressively measurable process, and $\mathbb{P}-$almost surely it satisfies
\begin{align}&u=u(t,Y,\nu),\label{control.function.shape}
\\&f(t):=u(t,Y,\nu)\in L^1([0,T];L^p(\Om)), \label{u.integrable.in.time}
\\&|u(t,Y_2,\nu_2)-u(t,Y_1,\nu_1)|\le L\cdot[|Y_2-Y_1| + \mW_1(\nu_2,\nu_1)]. \label{u.lipschitz.in.measure}
\end{align}
   
\end{ass}

\section{Well-posedness of discrete and mean-field equations}

We start by defining the strong solutions for \eqref{SDE.finite}-\eqref{Initial.data.finite} and \eqref{SDE.mean.field}-\eqref{Initial.data.mean.field}.

\begin{definition}\label{Def.conditional.solutions}

Let us fix a filtration $(\Om,\mF,(\mF_t)_{t\in[0,T]},\mathbb{P})$. Let $p\ge2$.

\medskip

$(i)$(Discrete problem)
Given the system \eqref{SDE.finite}-\eqref{ODE.finite} with initial data 
\begin{equation*}(X^{(0,N)},Y^{(0,N)})\in(L^p(\Om;\R^d))^N\times(\R^d)^M,\end{equation*}
we say that 
\begin{equation*}(X^{(N)}(t),Y^{(N)}(t))\in C([0,T];(L^p(\Om;\R^d))^{N})\times C([0,T];(L^p(\Om;\R^d))^{M})\end{equation*}
is a strong solution if for any $(\mF_t)_t-$adapted $d\times(N+1)-$dimensional standard Brownian motion $\mathbf{W}:=(W^{(c)},W_{1}^{(i)},\dots,W_N^{(i)})$ with independent components, for all $1\le n\le N$, for all $1\le m\le M$ and for all $t\in[0,T]$, $\mathbb{P}-$almost surely
\begin{align}&X_n^{(N)}(t)=X_n^{(0,N)} + \int_0^t\Big\{H_1*\mu_N(s)(X_n^{(N)}(s)) + \frac{1}{M}\sum_{m=1}^M K_1(Y_m^{(N)}(s)-X_n^{(N)}(s))\Big\}\,ds \notag
\\&\qquad\qquad+ \int_0^t\s_*^{(i)}(s,Y^{(N)}(s),X_n^{(N)}(s), \mu_N(s))dW_n^{(i)}(s) \notag
\\&\qquad\qquad+ \int_0^t\s_*^{(c)}(s,Y^{(N)}(s),X_n^{(N)}(s),\mu_N(s))dW^{(c)}(s), \label{strong.solution.X.discrete.common}
\\&Y_m^{(N)}(t)=Y_m^{(0,N)} + \int_0^t\Big\{K_2*\mu_N(s)(Y_m^{(N)}(s)) + \frac{1}{M}\sum_{j=1}^M H_2(Y_m^{(N)}(s) - Y_j^{(N)}(s))\,ds \notag
\\&\qquad\qquad+ \int_0^t u_m(s,Y^{(N)}(s),\mu_N(s))\Big\}\,ds, \label{strong.solutions.Y.discrete.common}
\end{align}
and both $X,Y$ are $(\mF_t)_t-$adapted processes. Here, $W^{(c)}$ is the common noise Brownian motion, while $W_1^{(i)},\dots W_N^{(i)}$ are the idiosyncratic noise Brownian motions.

\medskip

$(ii)$(Mean-field problem) Given the system \eqref{SDE.mean.field}-\eqref{ODE.mean.field} with only one SDE, with initial data  
\begin{equation*}(X^{(0)},Y^{(0)})\in L^p(\Om;\R^d)\times(\R^d)^M,\end{equation*}
and initial conditional law $\mu_0^{(i)}$,
we say that $(X(t),Y(t))\in C([0,T];L^p(\Om;\R^d))\times C([0,T];(\R^d)^M)$ is a strong solution if for any $(\mF_t)_t-$adapted $2d-$dimensional standard Brownian motion $\mathbf{W}:=(W^{(c)},W^{(i)})$ with independent components, for all $1\le m\le M$ and for all $t\in[0,T]$, $\mathbb{P}-$almost surely
\begin{align}&X(t)=X^{(0)} + \int_0^t\Big\{H_1*\mu^{(i)}(s)(X(s)) + \frac{1}{M}\sum_{m=1}^M K_1(Y_m(s)-X(s))\Big\}\,ds \notag
\\&\qquad\quad+\int_0^t\s_*^{(i)}(s,Y(s),X(s),\mu^{(i)}(s))dW^{(i)}(s)+ \int_0^t\s_*^{(c)}(s,Y(s),X(s),\mu^{(i)}(s))dW^{(c)}(s)
, \label{strong.solution.X.mean.field.common}
\\&Y_m(t)=Y_m^{(0)} + \int_0^t\Big\{K_2*\mu^{(i)}(s)(Y_m(s)) + \frac{1}{M}\sum_{j=1}^M H_2(Y_m(s) - Y_j(s))\,ds \notag
\\&\qquad\quad+ \int_0^t u_m(s,Y(s),\mu^{(i)}(s))\Big\}\,ds. \label{strong.solutions.Y.mean.field.common}
\end{align}
Moreover, $X$ is a $(\mF_t)_t-$adapted process, $Y$ is a $(\mF_t)_t-$adapted process and $\mu^{(i)}(t)$, which is the conditional law of $X(t)$ given $W^{(c)}(t)$, is a $(\mF_t)_t-$adapted continuous process such that $\mathbb{P}-$almost surely and for all $t\in[0,T]$, $\mu^{(i)}(t)\in\mW_p(\R^d)$.
Here, $W^{(c)}$ is the common noise Brownian motion, while $W^{(i)}$ is the mean-field idiosyncratic noise Brownian motion.
\end{definition}

\begin{remark}\label{rem:conditional.law} For any fixed random measure flow $(\nu_t)_{t\in[0,T]}$, if we assume that the stochastic differential equation
\begin{align*}&X(t)=X^{(0)} + \int_0^t\Big\{H_1*\nu(s)(X(s)) + \frac{1}{M}\sum_{m=1}^M K_1(Y_m(s)-X(s))\Big\}\,ds \notag
\\&\qquad\quad+\int_0^t\s_*^{(i)}(s,Y(s),X(s),\nu(s))dW^{(i)}(s)+ \int_0^t\s_*^{(c)}(s,Y(s),X(s),\nu(s))dW^{(c)}(s)
,
\\&Y_m(t)=Y_m^{(0)} + \int_0^t\Big\{K_2*\nu(s)(Y_m(s)) + \frac{1}{M}\sum_{j=1}^M H_2(Y_m(s) - Y_j(s))\,ds \notag
\\&\qquad\quad+ \int_0^t u_m(s,Y(s),\nu(s))\Big\}\,ds. 
\end{align*} 
has a unique strong solution, then there exists a measurable map $\Phi=(\Phi_1,\Phi_2)\colon\,C([0,T];\R^{2d})\longmapsto C([0,T];\R^{2d})$ such that the pair $(X,Y)=(\Phi_X,\Phi_Y)(W^{(c)},W^{(i)})$. Therefore, we can express the law of $X$ as $\text{Law}(X(t))(A)=\mathbb{E}[\mathbf{1}_A(\Phi_X(W^{(c)},W^{(i)})(t))]=\mathbb{E}[\mathbb{E}[\mathbf{1}_A(\Phi_X(W^{(c)},W^{(i)})(t))|\mF_t^{(c)}]]$, where $(\mF_t^{(c)})_{t\in[0,T]}$ is the natural filtration of $W^{(c)}$. By Doob-Dynkin Lemma (see \cite{BHL}, Lemma 2.11), we have that $\mathbb{E}[\mathbf{1}_A(\Phi_X(W^{(c)},W^{(i)})(t))|\mF_t^{(c)}]=F_t(A,W^{(c)}(t))$ for some function $\mB(\R^d)-$measurable function 
\begin{equation*}
F_t\colon\,(A,f) \in\mF_t\times C([0,T];\R^d) \longmapsto \mathbb{E}[\mathbf{1}_A(\Phi_X(f,W^{(i)})(t))] \in \R,
\end{equation*}
Since $W^{(c)}(t)$ is a $\mF_t-$measurable random variable, then we define the $\mF_t-$measurable random variable
\begin{equation*}
\text{Law}(X(t)|\mF_t^{(c)})\colon\,(B,\omega) \in \mB(\R^d)\times\Om \longmapsto F_t(X(t)^{-1}(B),W^{(c)}(t,\omega)) \in \R,
\end{equation*}
Since $\R^d$ is a Polish space, Theorem 2.12 in \cite{BHL} guarantees that up to null-measure sets, $\text{Law}(X(t)|\mF_t^{(c)})$ is the unique regular conditional law of $X(t)$ given $\mF_t^{(c)}$, that is, it is the unique function $Q_t\colon\,(B,\om)\in\mB(\R^d)\times\Om\longmapsto[0,1]$ such that

\medskip

$(i)$ $Q_t(B,\cdot)$ is $\mB(\R^d)-$measurable for all $B\in\mB(\R^d)$,

$(ii)$ $Q_t(\cdot,\om)$ is a probability measure on $(\R^d,\mB(\R^d))$ for all $\om\in\Om$,

$(iii)$ $Q_t(B,\om)=F_t(X(t)^{-1}(B),W^{(c)}(t,\om))$.

\medskip

This identity can be then taken as the definition of the conditional law of $X(t)$ given the common noise $W^{(c)}(t)$, which turns out to be a $\mF_t^{(c)}-$measurable random variable. One also has that $\text{Law}(X|\mF^{(c)})$ is a $(\mF_t^{(c)})_t-$adapted process holding a continuous version, and then it defines a unique random flow of measures: see for instance \cite{Cherny}. Moreover, if we take different independent, identically distributed initial data, their corresponding solutions preserve these two properties.
    
\end{remark}

\begin{remark}\label{remark:no.common.noise.gives.deterministic.law} We notice that if there is no common noise, that is, $\s_*^{(c)}\equiv0$, then the solutions are simply $\mF_t^{(i)}-$adapted, and so there is no dependence by $\Om^{(c)}$ for the mean-field law $\mu^{(i)}$; in other words, we recover the setting provided in \cite{ACS}, apart from the multiplicative idiosyncratic noises and the dependence of controls function on the herders.
\end{remark}

In the next sections, for simplicity of notations, when performing estimates we will often neglect the dependence on time and $\om\in\Om$.

We recall also the Gr\"onwall Inequality which will be massively useful in doing estimates:
\begin{prop}[Gr\"onwall Inequality]\label{prop:Gronwall.inequality}
Let $w,v\colon\,[a,b]\longmapsto\R$ be measurable functions, with $v$ bounded, and let $c\ge0$ a constant such that $v(t)\le c + \int_a^t w(s)v(s)\,ds$ for all $t\in[a,b]$. Then, for all $t\in[a,b]$
\begin{equation}\label{Gronwall.inequality}v(t)\le c\exp\Big(\int_a^t w(s)\,ds\Big).
\end{equation}

\end{prop}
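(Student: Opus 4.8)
The plan is to use the standard integrating-factor argument. First I would introduce the auxiliary function $G(t):=c+\int_a^t w(s)v(s)\,ds$, which by hypothesis dominates $v$, that is, $v(t)\le G(t)$ for all $t\in[a,b]$. Since $v$ is bounded and $w$ is integrable (so that the defining integral makes sense), $G$ is absolutely continuous with $G'(t)=w(t)v(t)$ for almost every $t$. Reading $w\ge0$ as part of the hypotheses—as is standard for this form of the inequality—the pointwise bound $v\le G$ upgrades to the differential inequality $G'(t)=w(t)v(t)\le w(t)G(t)$ a.e.

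Next I would multiply through by the integrating factor $\exp\big(-\int_a^t w(s)\,ds\big)$. Setting $H(t):=G(t)\exp\big(-\int_a^t w(s)\,ds\big)$, a direct computation gives $H'(t)=[G'(t)-w(t)G(t)]\exp\big(-\int_a^t w(s)\,ds\big)\le0$ a.e., so $H$ is non-increasing. Hence $H(t)\le H(a)=G(a)=c$, which rearranges to $G(t)\le c\exp\big(\int_a^t w(s)\,ds\big)$. Combining this with $v(t)\le G(t)$ yields the claim \eqref{Gronwall.inequality}.

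The only delicate point is the sign of $w$: the step $wv\le wG$ genuinely requires $w\ge0$, since otherwise the inequality would flip. An alternative route that makes the role of the hypotheses transparent is to iterate the integral inequality directly: substituting the bound for $v$ back into itself $k$ times produces $v(t)\le c\sum_{j=0}^{k}\frac{1}{j!}\left(\int_a^t w(s)\,ds\right)^j+R_k(t)$, where the boundedness of $v$ forces the remainder $R_k(t)$ to vanish as $k\to\infty$ and the resulting series sums to $c\exp\big(\int_a^t w(s)\,ds\big)$. I expect the integrating-factor route to be the cleaner of the two, with the main (and only minor) obstacle being the regularity bookkeeping, namely verifying that $G$ is absolutely continuous so that the fundamental theorem of calculus applies to $H$; this is handled by the boundedness of $v$ together with the integrability of $w$.
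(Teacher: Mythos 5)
Your proof is correct: the integrating-factor argument (set $G(t):=c+\int_a^t w(s)v(s)\,ds$, show $G'\le wG$ a.e., and conclude via the monotonicity of $G(t)\exp(-\int_a^t w)$) is the standard and complete route, and the iteration alternative you sketch also works. Note, however, that the paper states this proposition without proof, as a recalled classical fact, so there is no internal argument to compare against. Your remark about the sign of $w$ is a genuinely useful observation rather than a quibble: as written, the paper's statement omits the hypothesis $w\ge0$, and without it the conclusion is false (one can arrange $v$ to be very negative early and then large positive later, satisfying $v(t)\le c+\int_a^t wv$ with $w\equiv-1$ while violating $v(t)\le c\,e^{-(t-a)}$); since every application in the paper uses nonnegative $w$, reading $w\ge0$ into the hypotheses, as you do, is the right fix.
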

In this section, we will show the well-posedness of \eqref{SDE.finite}-\eqref{Initial.data.finite} and \eqref{SDE.mean.field}-\eqref{Initial.data.mean.field}.
To start with, we will consider the discrete problem \eqref{SDE.finite}-\eqref{Initial.data.finite}.

\begin{lemma}\label{lemma:finite.well.posedness}
Given Assumptions \ref{assumptions}, for any $p\ge2$ one has that system \eqref{SDE.finite}-\eqref{Initial.data.finite} has a unique strong solution in the sense of Definition \ref{Def.conditional.solutions}, point $(i)$. Moreover, if $(X^{(N)},Y^{(N)})$ is the unique strong solution of \eqref{SDE.finite}-\eqref{Initial.data.finite} and $\mu_N$ the associated empirical measure, one has that:
\begin{align}&\sup_{t\in[0,T]}\max\{\mathbb{E}[|(X^{(N)}(t),Y^{(N)}(t))|^p],\mathbb{E}[M_p^p(\mu_N(t))]\}\le C,\label{discrete.solution.boundedness}\end{align}
where $C=C(p,L,T,H_2(0),K_2(0),X^{(0,N)},Y^{(0,N)},u)$.

\end{lemma}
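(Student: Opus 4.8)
The plan is to recast the coupled system as a single finite-dimensional SDE with Lipschitz coefficients and an adapted forcing term, and then to run a Banach fixed-point argument together with Gr\"onwall and Burkholder--Davis--Gundy estimates. Write $Z:=(X_1^{(N)},\dots,X_N^{(N)},Y_1^{(N)},\dots,Y_M^{(N)})\in(\R^d)^{N+M}$ and collect the drift coming from the interaction kernels into a map $b(t,Z)$, the block diffusion built out of $\s_*^{(i)},\s_*^{(c)}$ into $\Sigma(t,Z)$ (with vanishing rows on the $Y$-components, since \eqref{ODE.finite} carries no noise), and the control into a separate term $U(t,Z):=(0,\dots,0,u(t,Y,\mu_N))$. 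First I would check that $b$ and $\Sigma$ are globally Lipschitz in $Z$, uniformly in $t$, and that $U$ is $\mathbb{P}$-a.s. Lipschitz in $Z$ and lies in $L^1([0,T];L^p(\Om))$. The kernel contributions are Lipschitz by \eqref{K.L.Lipschitz} and \eqref{convoluted.function.lipschitz}; the crucial point is that the dependence on the empirical measure is Lipschitz, which follows from \eqref{Wasserstein.empirical.measure} (so $X\mapsto\mu_N$ is $\mW_1$-Lipschitz in the particle positions) combined with \eqref{idiosyncratic.noise.is.lipschitz}, \eqref{common.noise.is.lipschitz} and \eqref{u.lipschitz.in.measure}; integrability in time of $U$ is exactly \eqref{u.integrable.in.time}.

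For existence and uniqueness I would define the solution map $\mT$ on $C([0,T];(L^p(\Om;\R^d))^{N+M})$ by substituting a candidate process into the right-hand sides of \eqref{strong.solution.X.discrete.common}--\eqref{strong.solutions.Y.discrete.common}. That $\mT$ lands in the same space (in particular that the image admits a continuous $L^p$-valued version) follows from dominated convergence for the Bochner integrals and from Proposition \ref{prop:Burkholder.Davis.Gundy.Inequality} for the stochastic integral. To obtain the contraction, given two candidates $Z^1,Z^2$ I would estimate $\mathbb{E}[\sup_{s\le t}|\mT(Z^1)(s)-\mT(Z^2)(s)|^p]$ by splitting into drift, control and diffusion parts: the drift and control differences are handled by H\"older in time and the Lipschitz bounds, while the diffusion difference is controlled by the upper Burkholder--Davis--Gundy bound in \eqref{Doob.maximal.inequality}, followed by Jensen's inequality (here $p\ge2$ is used to pass from $(\int_0^t|\Sigma^1-\Sigma^2|^2\,ds)^{p/2}$ to $\int_0^t\mathbb{E}[|\Sigma^1-\Sigma^2|^p]\,ds$). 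All three contributions are then bounded by $C\int_0^t\mathbb{E}[\sup_{r\le s}|Z^1-Z^2|^p]\,ds$; multiplying by $e^{-\gamma p t}$ and taking the supremum yields a contraction in the $L^p$-analogue of the decay norm \eqref{decay.norm.continuous.function}--\eqref{decay.distance.measures} as soon as $\gamma$ is chosen large enough. Since that norm is complete, the Banach fixed-point theorem produces the unique strong solution in the sense of Definition \ref{Def.conditional.solutions}(i).

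For the a priori bound \eqref{discrete.solution.boundedness} I would run the same kind of estimate directly on the solution. Applying $|\cdot|^p$ and $\sup_{s\le t}$ to the integral identities, bounding the drift by its linear growth (from \eqref{K.L.Lipschitz}, so the reference values of the kernels enter the constant), the stochastic integral by Proposition \ref{prop:Burkholder.Davis.Gundy.Inequality} together with the linear growth of $\s_*^{(i)},\s_*^{(c)}$, and the control term by $\|u\|_{L^1([0,T];L^p(\Om))}$ through a Young-type inequality, one arrives at $\mathbb{E}[\sup_{s\le t}|Z(s)|^p]\le C_0+C_1\int_0^t\mathbb{E}[\sup_{r\le s}|Z(r)|^p]\,ds$, with $C_0,C_1$ depending on $p,L,T$, the reference values of the interaction kernels and of the noise coefficients, the initial data and $u$. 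Gr\"onwall's inequality (Proposition \ref{prop:Gronwall.inequality}) then gives the uniform-in-$t$ bound on $\mathbb{E}[\sup_{s\le t}|Z(s)|^p]$, and the bound on $\mathbb{E}[M_p^p(\mu_N(t))]$ is immediate from \eqref{Mp.norm.empirical.measure}, which controls $M_p^p(\mu_N)$ by the empirical average of $|X_n^{(N)}|^p$.

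I expect the main obstacle to be the diffusion term. In the additive-noise setting of \cite{ACS} the noise cancels in the difference estimates and a first-moment Doob inequality suffices; here $\Sigma$ depends genuinely on $Z$ and on $\mu_N$, so $\Sigma(Z^1)-\Sigma(Z^2)$ does not vanish and must be estimated in $L^p$. Controlling the full $p$-th moment of the stochastic integral forces the use of the two-sided Burkholder--Davis--Gundy inequality rather than Doob's inequality, and the interplay between the $\mW_1$-dependence of the coefficients, the Jensen step (valid precisely because $p\ge2$) and the only-$L^1$-in-time, adapted nature of the control is the delicate part to organize so that the final Gr\"onwall structure is preserved.
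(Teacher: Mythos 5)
Your proposal is correct and follows essentially the same route as the paper: a Banach fixed-point argument in an exponentially weighted $L^p$-in-$\Omega$ norm, with the kernel, noise and control terms handled by the Lipschitz assumptions together with \eqref{Wasserstein.empirical.measure}, and the stochastic integrals controlled by the Burkholder--Davis--Gundy inequality (plus the Jensen step in time that requires $p\ge2$). If anything, you are more explicit than the paper about the final Gr\"onwall argument yielding \eqref{discrete.solution.boundedness} and about deducing the moment bound on $\mu_N$ from \eqref{Mp.norm.empirical.measure}, which the paper leaves implicit in its self-map step.
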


\begin{proof}

Let us consider the map $\Phi$ given by
\begin{align}&\Phi(F,G):=(\Phi_1(F,G),\Phi_2(F,G)), \label{Phi.map.finite}
\\&(\Phi_1(F,G))_n(t):=X_n^{(0,N)} + \int_0^t\Big\{H_1*\mu_N(s)(F_n(s)) + \frac{1}{M}\sum_{j=1}^M K_1(F_n(s)-G_j(s))\Big\}\,ds \notag\\&\qquad\qquad\quad\qquad+\int_0^t\s_*^{(i)}(s,G(s),F_n(s),\mu_N(s))dW_n^{(i)}(s) + \int_0^t\s_*^{(c)}(s,G(s),F_n(s),\mu_N(s))dW^{(c)}(s)
\label{Phi.1},
\\&(\Phi_2(F,G))_m(t):=Y_m^{(0,N)} + \int_0^t\Big\{K_2*\mu_N(s)(G_m(s)) + \frac{1}{M}\sum_{j=1}^M H_2(G_m(s) - G_j(s))\Big\}\,ds \notag
\\&\qquad\qquad\qquad\qquad+ \int_0^t u_m(s,G(s),\mu_N(s))\,ds;\label{Phi.2}
\end{align}
here, we are denoting by 
\begin{equation*}(\mu_N)_s:=(\mu_N)_s[F]:=\frac{1}{N}\sum_{j=1}^N\delta_{F(s)}.
\end{equation*}
We equip $B:=C([0,T];L^p(\Om;(\R^d)^{N\times M}))$ with the norm
\begin{equation*}\|\eta\|_{p,\g}:=\Big(\sup_{t\in[0,T]}e^{-p\g t}\mathbb{E}[|\eta(t)|^p]\Big)^{\frac{1}{p}}.
\end{equation*}

\medskip

We divide the proof into two steps.

\medskip

$(i)$ \emph{The map $\Phi$ maps $B$ into itself.}

\medskip

Let us fix $(F,G)\in B$. Let us start by estimating $\Phi_1(F,G)$: for all $n$,\,$1\le n\le N$, we have
\begin{align}|(\Phi_1(F,G))_n|
&\le|X_n^{(0,N)}| \notag
\\&+ \underbrace{\int_0^t|H_1*\mu_N(F_n)|\,ds}_{=:(I)} + \underbrace{\frac{1}{M}\sum_{j=1}^M\int_0^t|K_1(F_n - G_j)|\,ds}_{=:(II)} \notag
\\&+\Big|\int_0^t\s_*^{(i)}(s,G,F_n,\mu_N)dW_n^{(i)}(s) \Big| +  \Big|\int_0^t\s_*^{(c)}(s,G,F_n,\mu_N)dW^{(c)}(s) \Big|
\label{first.estimate}
\end{align}
Let us have a look at $(I)$. By \eqref{K.L.Lipschitz}, \eqref{convoluted.function.lipschitz} and \eqref{Mp.norm.empirical.measure} one has
\begin{align*}|H_1*\mu_N(F_n)(s)|
&\le|H_1*\mu_N(F_n) - H_1*\mu_N(0)|(s) + |H_1*\mu_N(0)|(s)
\\&\le L\cdot|F(s)| + \int_{\R^d}|H_1(-y)|\,d\mu_N(y)
\\&\le L\cdot|F(s)| + L\int_{\R^d}|y|\,d\mu_N(y) + |H_1(0)|
\\&\le 2L\cdot|F(s)| + |H_1(0)|,
\end{align*}
so that
\begin{equation*}(I)\lesssim_{L,H_1(0),T}\int_0^t|F(s)|\,ds.
\end{equation*}
As for $(II)$, by \eqref{K.L.Lipschitz} one has
\begin{align*}|K_1(F_n - G_j)|
&\le|K_1(F_n - G_j) - K_1(0)| + |K_1(0)|
\\&\le L|F_n - G_j| + |K_1(0)|
\\&\le L[|F| + |G|] + |K_1(0)|, 
\end{align*}
so that
\begin{equation*}(II)\lesssim_{L,K_1(0),T}\int_0^t[|F(s)| + |G(s)|]\,ds.
\end{equation*}
Plugging these estimates into \eqref{first.estimate}, we get
\begin{align*}|(\Phi_1(F,G))_n|
&\lesssim_{C}|X^{(0,N)}| + \int_0^t[|F(s)| + |G(s)|]\,ds 
\\&+\Big|\int_0^t\s_*^{(i)}(s,G,F_n,\mu_N)dW_n^{(i)}(s) \Big| +  \Big|\int_0^t\s_*^{(c)}(s,G,F_n,\mu_N)dW^{(c)}(s) \Big|,
\end{align*}
where $C=C(L,T,H_1(0),K_1(0))$ is a positive constant. Exponentiating both sides by $p$ and taking the expectation, we get by \eqref{Doob.maximal.inequality}
\begin{align*}\mathbb{E}[|\Phi_1(F,G)|^p]
&\lesssim_{C,p}\mathbb{E}[|X^{(0,N)}|^p] + \int_0^t[\mathbb{E}[|F(s)|^p] + \mathbb{E}[|G(s)|^p]]\,ds
\\&+\mathbb{E}\Big[\sup_{s\in[0,t]}\Big|\int_0^s\s_*^{(i)}(z,G,F_n,\mu_N)dW_n^{(i)}(z) \Big|^p\Big] 
\\&+ \mathbb{E}\Big[\sup_{s\in[0,t]}\Big|\int_0^s\s_*^{(c)}(z,G,F_n,\mu_N)dW^{(c)}(z) \Big|^p\Big]
\\&\lesssim_{C,p}\mathbb{E}[|X^{(0,N)}|^p] + \int_0^t[\mathbb{E}[|F(s)|^p] + \mathbb{E}[|G(s)|^p]]\,ds 
\\&+\int_0^t\mathbb{E}[|\s_*^{(i)}(s,G(s),F_n(s),\mu_N(s))|^p]\,ds + \int_0^t\mathbb{E}[|\s_*^{(c)}(s,G(s),F_n(s),\mu_N(s))|^p]\,ds.
\end{align*}
Similarly, using Assumptions \ref{assumptions}, for $\Phi_2(F,G)$ it holds that
\begin{align*}\mathbb{E}[|\Phi_2(F,G)|^p]
&\lesssim_{C,p}|Y^{(0,N)}|^p + \int_0^t[\mathbb{E}[|F(s)|^p] + \mathbb{E}[|G(s)|^p]]\,ds + \underbrace{\mathbb{E}\Big[\Big|\int_0^t u(s,G(s),\mu_N(s))\,ds\Big|^p\Big]}_{=:(I)}
\end{align*}
where this time $C=C(L,T,H_2(0),K_2(0))$.
We want to estimate $(I)$. By \eqref{u.lipschitz.in.measure}, $u\in L^p(\Om^{(c)})$ and  \eqref{u.integrable.in.time}, we have
\begin{align*}(I)
&\lesssim_p\int_0^t\mathbb{E}\Big[|u(s,G(s),\mu_N(s))-u(s,0,\delta_0)|^p\Big]\,ds + \mathbb{E}\Big[\Big|\int_0^t u(s,0,\delta_0)\,ds\Big|^p\Big]
\\&\lesssim_{p,L,u}\int_0^t\mathbb{E}\Big[|G(s)|^p + \mW_1^p(\mu_N(s),\delta_0) \Big]\,ds + 1
\end{align*}

Then, one finds out that $\Phi(F,G)\in B$ by the fact that $X^{(0,N)}\in (L^p(\Om;\R^d))^N$ and by \eqref{idiosyncratic.noise.is.continuous}, \eqref{u.integrable.in.time} and \eqref{u.lipschitz.in.measure}.

\medskip

$(ii)$ \emph{The map $\Phi$ is a contraction.}

\medskip

Let us take $(F_1,G_1),(F_2,G_2)\in B$, let us call $\mu_{1,N},\mu_{2,N}$ the empirical measures associated to $F_1,F_2$ respectively. Let us take a look at $\Phi_1$: we have
\begin{align}|(\Phi_1(F_2,G_2))_n-(\Phi_1(F_1,G_1))_n|
&\le\int_0^t|H_1*\mu_{2,N}(F_{2,n}) - H_1*\mu_{1,N}(F_{1,n})|\,ds \notag
\\&+\frac{1}{M}\sum_{j=1}^M\int_0^t|K_1(F_{2,n} - G_{2,j}) - K_1(F_{1,n} - G_{1,j})|\,ds \notag
\\&+\Big|\int_0^t[\s_*^{(i)}(s,G_2,F_{2,n},\mu_{2,N}) - \s_*^{(i)}(s,G_1,F_{1,n},\mu_{1,N})]\,dW_n^{(i)}(s)\Big| \notag
\\&+\Big|\int_0^t[\s_*^{(c)}(s,G_2,F_{2,n},\mu_{2,N}) - \s_*^{(c)}(s,G_1,F_{1,n},\mu_{1,N})]\,dW^{(c)}(s)\Big| \notag
\\&\le\underbrace{\int_0^t|H_1*\mu_{2,N}(F_{2,n}) - H_1*\mu_{1,N}(F_{2,n})|\,ds}_{=:(I)} \notag
\\&+\underbrace{\int_0^t|H_1*\mu_{1,N}(F_{2,n}) - H_1*\mu_{1,N}(F_{1,n})|\,ds}_{=:(II)} \notag
\\&+\underbrace{\frac{1}{M}\sum_{j=1}^M\int_0^t|K_1(F_{2,n} - G_{2,j}) - K_1(F_{1,n} - G_{1,j})|\,ds}_{=:(III)} \notag
\\&+\Big|\int_0^t[\s_*^{(i)}(s,G_2,F_{2,n},\mu_{2,N}) - \s_*^{(i)}(s,G_1,F_{1,n},\mu_{1,N})]\,dW_n^{(i)}(s)\Big| \notag
\\&+\Big|\int_0^t[\s_*^{(c)}(s,G_2,F_{2,n},\mu_{2,N}) - \s_*^{(c)}(s,G_1,F_{1,n},\mu_{1,N})]\,dW^{(c)}(s)\Big|. \label{contraction.estimate.finite.1}
\end{align}
Let us estimate $(I)$. By \eqref{convoluted.operator.lipschitz.pointwise} and by \eqref{Wasserstein.empirical.measure} we get
\begin{align*}|H_1*\mu_{2,N}(F_{2,n}) - H_1*\mu_{1,N}(F_{2,n})|(s)
&\le L\cdot\mW_1(\mu_{2,N}(s),\mu_{1,N}(s))\le L|F_{2}(s) - F_{1}(s)|
\end{align*}
so that
\begin{equation*}(I)\lesssim_L\int_0^t|F_2(s) - F_1(s)|\,ds.
\end{equation*}
As for $(II)$, thanks to \eqref{convoluted.function.lipschitz} we get the estimate
\begin{equation*}(II)\lesssim_L\int_0^t|F_2(s) - F_1(s)|\,ds,
\end{equation*}
and similarly it holds for $(III)$ that
\begin{equation*}(III)\lesssim_L\int_0^t[|F_2(s) - F_1(s)| + |G_2(s) - G_1(s)|]\,ds.
\end{equation*}
Coming back to \eqref{contraction.estimate.finite.1}, we get
\begin{align}|\Phi_1(F_2,G_2)(t)-\Phi_1(F_1,G_1)(t)|
&\lesssim_L\int_0^t[|F_2(s) - F_1(s)| + |G_2(s) - G_1(s)|]\,ds \notag
\\&+\sup_{s\in[0,t]}\Big|\int_0^s[\s_*^{(i)}(z,G_2,F_{2,n},\mu_{2,N}) - \s_*^{(i)}(z,G_1,F_{1,n},\mu_{1,N})]\,dW_n^{(i)}(z)\Big|\notag
\\&+\sup_{s\in[0,t]}\Big|\int_0^s[\s_*^{(c)}(z,G_2,F_{2,n},\mu_{2,N}) - \s_*^{(c)}(z,G_1,F_{1,n},\mu_{1,N})]\,dW^{(c)}(z)\Big|\label{est.prov}
\end{align}
By the same arguments and by using also \eqref{u.lipschitz.in.measure}, we can similarly get the estimate
\begin{equation*}|\Phi_2(F_2,G_2)(t)-\Phi_2(F_1,G_1)(t)|
\lesssim_L\int_0^t[|F_2(s) - F_1(s)| + |G_2(s) - G_1(s)|]\,ds;
\end{equation*}
Let us exponentiate both sides of \eqref{est.prov} by $p$, take the expectation, and we get
\begin{align}\mathbb{E}[|\Phi(F_2,G_2)(s)-\Phi(F_1,G_1)(s)|^p]
&\lesssim_{p,L}\int_0^t\mathbb{E}[|F_2(s) - F_1(s)|^p + |G_2(s) - G_1(s)|^p]\,ds \notag
\\&+\underbrace{\mathbb{E}\Big[\sup_{s\in[0,t]}\Big|\int_0^s[\s_*^{(i)}(z,G_2,F_{2,n},\mu_{2,N}) - \s_*^{(i)}(z,G_1,F_{1,n},\mu_{1,N})]\,dW_n^{(i)}(z)\Big|^p\Big]}_{=:(I)} \notag
\\&+\underbrace{\mathbb{E}\Big[\sup_{s\in[0,t]}\Big|\int_0^s[\s_*^{(c)}(z,G_2,F_{2,n},\mu_{2,N}) - \s_*^{(c)}(z,G_1,F_{1,n},\mu_{1,N})]\,dW^{(c)}(z)\Big|^p\Big]}_{=:(II)}. \label{contraction.estimates.finite.2}
\end{align}
Let us estimate $(I)$. By \eqref{Doob.maximal.inequality}, \eqref{idiosyncratic.noise.is.lipschitz} and \eqref{Wasserstein.empirical.measure} we have
\begin{align*}(I)
&\lesssim_{p,T}\int_0^t\mathbb{E}[|\s_*^{(i)}(s,G_2,F_{2,n},\mu_{2,N}) - \s_*^{(i)}(s,G_1,F_{1,n},\mu_{1,N})|^p]\,ds
\\&\lesssim_{p,T,L}\int_0^t\mathbb{E}[|F_2 - F_1|^p + |G_2 - G_1|^p +\mW_1^p(\mu_{2,N},\mu_{1,N})]\,ds
\\&\lesssim_{p,T,L}\int_0^t\mathbb{E}[|F_2 - F_1|^p + |G_2 - G_1|^p]\,ds;
\end{align*}
similarly, one estimates $(II)$.
Coming back to \eqref{contraction.estimates.finite.2}, we get
\begin{align*}\mathbb{E}[|\Phi(F_2,G_2)(s)-\Phi(F_1,G_1)(s)|^p]
&\lesssim_{p,T,L}\int_0^t\mathbb{E}[|F_2(s) - F_1(s)|^p + |G_2(s) - G_1(s)|^p]\,ds
\\&\lesssim_{p,T,L}\frac{1}{p\g}e^{p\g t}\|(F_2,G_2)-(F_1,G_1)\|_{\g,p}^p:
\end{align*}
if we multiply both sides by $e^{-p\g t}$, take the supremum over $t\in[0,T]$ and exponentiate by $\frac{1}{p}$, we get that for $\g$ big enough, $\Phi$ is a contraction, so we conclude.

We finally remark that since for each $(F,G)$, $\Phi(F,G)$ is a $\mF_t-$adapted process, then also the solution of \eqref{SDE.finite}-\eqref{Initial.data.finite} is by fixed points iterations.

\end{proof}

As far as the mean-field problem \eqref{SDE.mean.field}-\eqref{Initial.data.mean.field} is concerned, its well-posedness in the sense of Definition \ref{Def.conditional.solutions}, point $(ii)$, is proved in the following lemma:
\begin{lemma}[Mean-field problem well posedness]\label{thm:mean.field.well.posedness}
Given Assumptions \ref{assumptions}, for any $p\ge2$ one has that system \eqref{SDE.mean.field}-\eqref{Initial.data.mean.field} has a unique solution in the sense of Definition \ref{Def.conditional.solutions}, point $(ii)$. Moreover, if $(X,Y)$ is the solution of \eqref{SDE.mean.field}-\eqref{Initial.data.mean.field} and $\mu^{(i)}(t)$ is the law of $X(t)$ given $W_t^{(c)}$, one has that
\begin{align}\sup_{t\in[0,T]}\max\{\mathbb{E}[|X(t)|^p],\mathbb{E}[|Y(t)|^p],\mathbb{E}[|M_p^p(\mu^{(i)}(t))|]\}\le C, \label{mean.field.solution.boundedness} 
\end{align}
where $C=C(p,L,T,H_2(0),K_2(0),X^{(0)}, Y^{(0)},u)$.
\end{lemma}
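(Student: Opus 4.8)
\emph{Strategy.} The plan is to realize the conditional law $\mu^{(i)}$ as the fixed point of a map on the space of $(\mF_t^{(c)})_t$-adapted measure flows, and, for each frozen flow, to solve the resulting decoupled SDE-ODE system by the contraction argument already used in Lemma~\ref{lemma:finite.well.posedness}. Let $\mathcal{S}$ denote the space of continuous, $(\mF_t^{(c)})_t$-adapted processes $\nu\colon[0,T]\times\Om\to\mW_p(\R^d)$ with $\sup_{t}\mathbb{E}[M_p^p(\nu(t))]<\infty$, endowed with the decay metric $\rho_\g(\nu_2,\nu_1):=\sup_{t\in[0,T]}e^{-\g t}\big(\mathbb{E}[\mW_p^p(\nu_2(t),\nu_1(t))]\big)^{1/p}$; completeness is checked as for \eqref{decay.distance.measures}.

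\emph{Inner step.} For a fixed $\nu\in\mathcal{S}$ I would solve the decoupled system of Remark~\ref{rem:conditional.law}, i.e. the system obtained by replacing $\mu^{(i)}$ with the given flow $\nu$. Since $\nu$ now enters the coefficients only through the Lipschitz bounds \eqref{idiosyncratic.noise.is.lipschitz}, \eqref{common.noise.is.lipschitz}, \eqref{u.lipschitz.in.measure} and the convolution estimate \eqref{convoluted.operator.lipschitz.pointwise}, this is a bona fide SDE-ODE system with $(\mF_t)_t$-adapted coefficients, and it admits a unique strong solution $(X^\nu,Y^\nu)$ by the verbatim Banach fixed-point argument of Lemma~\ref{lemma:finite.well.posedness} (the only modification being that the empirical measure is replaced by the external flow $\nu$; the hypothesis $\sup_t\mathbb{E}[M_p^p(\nu(t))]<\infty$ is what controls the convolution term). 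By Remark~\ref{rem:conditional.law} the conditional law $\Gamma(\nu):=\text{Law}(X^\nu(\cdot)\mid\mF^{(c)})$ is a well-defined element of $\mathcal{S}$.

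\emph{Outer contraction and moments.} A solution of \eqref{SDE.mean.field}-\eqref{Initial.data.mean.field} is precisely a fixed point $\mu^{(i)}=\Gamma(\mu^{(i)})$. To contract $\Gamma$, take $\nu_1,\nu_2\in\mathcal{S}$ and drive $X^{\nu_1},X^{\nu_2}$ by the \emph{same} pair $(W^{(c)},W^{(i)})$; conditionally on $\mF_t^{(c)}$ the pair $(X^{\nu_2}(t),X^{\nu_1}(t))$ is then a coupling of $\Gamma(\nu_2)(t)$ and $\Gamma(\nu_1)(t)$, so by \eqref{p.Wasserstein.metric.1}
\begin{equation*}
\mW_p^p\big(\Gamma(\nu_2)(t),\Gamma(\nu_1)(t)\big)\le\mathbb{E}\big[|X^{\nu_2}(t)-X^{\nu_1}(t)|^p\mid\mF_t^{(c)}\big].
\end{equation*}
Taking expectations and bounding the right-hand side by the Burkholder-Davis-Gundy inequality \eqref{Doob.maximal.inequality} on the two stochastic integrals, together with the Lipschitz bounds above, gives an estimate for $\mathbb{E}[|X^{\nu_2}(t)-X^{\nu_1}(t)|^p]$ (and an analogous one for the $Y$-component) in terms of $\int_0^t\mathbb{E}[\mW_p^p(\nu_2(s),\nu_1(s))]\,ds$ plus the diagonal differences; absorbing the latter by Gr\"onwall \eqref{Gronwall.inequality} and passing to the weighted norm yields $\rho_\g(\Gamma(\nu_2),\Gamma(\nu_1))\le C_{p,T,L}\,\g^{-1/p}\,\rho_\g(\nu_2,\nu_1)$, a contraction for $\g$ large. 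The unique fixed point $\mu^{(i)}$ and the associated $(X,Y)=(X^{\mu^{(i)}},Y^{\mu^{(i)}})$ solve the system, and uniqueness of $(X,Y)$ follows from the inner uniqueness once $\mu^{(i)}$ is fixed. The moment bound \eqref{mean.field.solution.boundedness} for $X,Y$ is obtained exactly as in step $(i)$ of Lemma~\ref{lemma:finite.well.posedness}, via \eqref{Doob.maximal.inequality} and \eqref{Gronwall.inequality}; the bound on $\mathbb{E}[M_p^p(\mu^{(i)}(t))]$ is then immediate from the tower property, since $M_p^p(\mu^{(i)}(t))=\mathbb{E}[|X(t)|^p\mid\mF_t^{(c)}]$ forces $\mathbb{E}[M_p^p(\mu^{(i)}(t))]=\mathbb{E}[|X(t)|^p]$.

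\emph{Main obstacle.} The subtle point is the outer fixed point over \emph{random} measure flows: one must ensure $\Gamma(\nu)$ is a genuine $(\mF_t^{(c)})_t$-adapted continuous measure-valued process (the content of Remark~\ref{rem:conditional.law}) and, above all, reduce the comparison of two conditional laws to the pathwise $L^p$ estimate through the shared-noise coupling. As stressed in the introduction, the multiplicative noises do not cancel in the Gr\"onwall estimates, so the Burkholder-Davis-Gundy inequality \eqref{Doob.maximal.inequality}---and not a mere first-moment bound---is what lets one close the contraction at the level of $p$-th moments. When $\s_*^{(c)}\equiv0$ the flow $\nu$ is deterministic (Remark~\ref{remark:no.common.noise.gives.deterministic.law}) and $\rho_\g$ collapses to the decay distance $d_{p,\g}$ of \eqref{decay.distance.measures}, recovering the classical McKean-Vlasov fixed point.
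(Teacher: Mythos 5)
Your proposal is correct and follows essentially the same route as the paper: freeze the random measure flow, solve the decoupled SDE--ODE system by the Banach fixed-point argument of Lemma~\ref{lemma:finite.well.posedness}, then contract the map $\nu\mapsto\text{Law}(X^\nu\mid\mF^{(c)})$ in the weighted metric by using the shared-noise coupling to dominate the conditional Wasserstein distance by the conditional $L^p$ difference, closed via Burkholder--Davis--Gundy and Gr\"onwall. The only cosmetic differences are your explicit tower-property derivation of the bound on $\mathbb{E}[M_p^p(\mu^{(i)}(t))]$ (which the paper states without detail) and the exponent in the contraction constant, neither of which affects the argument.
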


\begin{proof}
We split the proof in two steps.

\medskip

$(i)$ \emph{Fix the conditional law $\nu^{(i)}\in C([0,T];L^p(\Om;\mW_p(\R^d)))$ and solve the approximated problem.}

\medskip

We want to show that the approximated problem
\begin{align}&dX(t)=\Big\{H_1*\nu^{(i)}(t)(X(t)) + \frac{1}{M}\sum_{j=1}^M K_1(Y_j(t) - X(t))\Big\}dt
\\&\qquad\quad+ \Big\{\s_*^{(i)}(t,Y(t),X(t),\nu^{(i)}(t))\Big\}dW^{(i)}(t) + \Big\{\s_*^{(c)}(t,Y(t),X(t),\nu^{(i)}(t))\Big\}dW^{(c)}(t), \label{SDE.mean.field.approximate}
\\&\notag
\\&\frac{d}{dt}Y_m(t)=K_2*\nu^{(i)}(t)(Y_m(t)) + \frac{1}{M}\sum_{j=1}^M H_2(Y_m(t)-Y_j(t)) + u_m(t,Y(t),\nu^{(i)}(t)),\label{ODE.mean.field.approximate}
\\&\notag
\\&X(0)=X^{(0)},\qquad Y_m(0)=Y_m^{(0)},\qquad1\le m\le M\label{Initial.data.mean.field.approximate}
\end{align}
has a solution in the sense of Definition \ref{Def.conditional.solutions}. Really, this is true by arguing in the same way done in Lemma \ref{lemma:finite.well.posedness} but considering the space $C([0,T];L^p(\Om;\R^d)\times(\R^d)^M)$ for $(X,Y)$.

\medskip

$(ii)$ \emph{Find the conditional law $\mu^{(i)}(t)$ of $X(t)$ given $W^{(c)}(t)$.}

\medskip

We now want to find a strong solution of \eqref{SDE.mean.field}-\eqref{ODE.mean.field}. To this term, let us consider the operator $\mS\colon\,(\nu,W,Z^{(0)})\longmapsto S(\nu^{(i)},W,Z^{(0)})$ which takes a random measure $\nu\in C([0,T];L^p(\Om;\mW_p(\R^d)))$, a Brownian motion $W=(W^{(i)},W^{(c)})$ and an initial datum $Z^{(0)}:=(X^{(0)},Y^{(0)})\in L^p(\Om;\R^d)\times(\R^d)^M$, and gives the solution of \eqref{SDE.mean.field.approximate}-\eqref{Initial.data.mean.field.approximate}, namely
\begin{align*}&\mS(\nu,W,Z^{(0)}):=(\mS_0;\mS_1,\dots,\mS_M),
\\&\mS_0:=\mS_0(\nu,W,Z^{(0)})=X_\nu,\qquad\mS_m:=\mS_m(\nu,W,Z^{(0)})=(Y_m)_\nu,\quad1\le m\le M.
\end{align*}
Fixed the Brownian motion $W$ and the initial datum $Z^{(0)}$, if we call $B:=C([0,T];L^p(\Om;\mW_p(\R^d)))$ we can define the operator $\mT\colon\,B\longmapsto B$, such that for all $t\in[0,T]$ and for all $\nu\in B$,
\begin{align}\label{Tau.map.nu}\mT(\nu)(t):=\text{Law}(S_0(\nu,W,Z^{(0)})(t)|\mF_t^{(c)}).
\end{align}
As a result, we get a solution of \eqref{SDE.mean.field}-\eqref{ODE.mean.field} if we find a fixed point for \eqref{Tau.map.nu}. We want to prove that in fact, $\mT$ is a contraction map if we equip $B$ with the distance 
\begin{equation*}d'_{p,\g}(\nu_2,\mu_1):=\sup_{t\in[0,T]}e^{-\g t}(\mathbb{E}^{(c)}[\mW_p^p(\nu_2(s),\nu_1(s))])^{\frac{1}{p}}    \end{equation*}
where $d_{p,\g}$ is defined in \eqref{decay.distance.measures}. Of course, the map $\mT$ is well-posed.

\medskip

Let us fix two measures $\nu_1,\nu_2$. By \eqref{p.Wasserstein.metric}
we have that for all $t\in[0,T]$,
\begin{align}e^{-p\g t}(\mW_p(\mT(\nu_1)(t),\mT(\nu_2)(t)))^p 
&\le\mathbb{E}^{(i)}[e^{-p\g t}|X_{\nu_2}(t)-X_{\nu_1}(t)|^p]. \label{T.contraction.estimate.1}
\end{align}
To proceed with the estimate \eqref{T.contraction.estimate.1}, we need to estimate $\mathbb{E}^{(i)}[e^{-p\g t}|X_{\nu_2}(t)-X_{\nu_1}(t)|^p]$ and we want to do this by applying Proposition \ref{prop:Gronwall.inequality}. By arguing as in the proof of Lemma \ref{lemma:finite.well.posedness}, we firstly observe that for all $m$ with $1\le m\le M$, arguing as done in the proof of Lemma \ref{lemma:finite.well.posedness} we have
\begin{align*}
|(Y_m)_{\nu_2}(t)-(Y_m)_{\nu_1}(t)|
&\le \int_0^t|K_2*\nu_2((Y_m)_{\nu_2}) - K_2*\nu_1((Y_m)_{\nu_1})|\,ds
\\&+\frac{1}{M}\sum_{j=1}^M\int_0^t|H_2((Y_m)_{\nu_2} - (Y_j)_{\nu_2}) - H_2((Y_m)_{\nu_1} - (Y_j)_{\nu_1})|\,ds
\\&+\int_0^t|u_m(s,Y_{\nu_2},\nu_2) - u_m(s,Y_{\nu_1},\nu_1)|\,ds
\\&\lesssim_L\int_0^t|K_2*\nu_2((Y_m)_{\nu_2}) - K_2*\nu_2((Y_m)_{\nu_1})|\,ds
\\&+\int_0^t|K_2*\nu_2((Y_m)_{\nu_1}) - K_2*\nu_1((Y_m)_{\nu_1})|\,ds
\\&+\int_0^t|Y_{\nu_2}(s) - Y_{\nu_1}(s)|\,ds
\\&+\int_0^t|u_m(s,Y_{\nu_2},\nu_2) - u_m(s,Y_{\nu_1},\nu_1)|\,ds
\\&\lesssim_{L}e^{\g t}\int_0^t\sup_{z\in[0,s]}e^{-\g s}|Y_{\nu_2}(z) - Y_{\nu_1}(z)|\,ds + \frac{1}{\g}e^{\g t}d_{p,\g}(\nu_2,\nu_1);
\end{align*}
by Proposition \ref{prop:Gronwall.inequality}, we get that for all $t\in[0,T]$
\begin{equation*}\sup_{s\in[0,t]}e^{-\g s}|Y_{\nu_2}(s) - Y_{\nu_1}(s)|\lesssim_{L,T} \frac{1}{\g}d_{p,\g}(\nu_2,\nu_1).\end{equation*}
Thanks to the latter inequality, similarly
we get that for all $t\in[0,T]$,
\begin{align}
|X_{\nu_2}(t)-X_{\nu_1}(t)|
&\le\int_0^t|H_1*\nu_2(X_{\nu_2}) - H_1*\nu_1(X_{\nu_1})|\,ds \notag
\\&+\frac{1}{M}\sum_{m=1}^M\int_0^t|K_1(X_{\nu_2}-(Y_m)_{\nu_2})- K_1(X_{\nu_1}-(Y_m)_{\nu_1})|\,ds \notag
\\&+\Big|\int_0^t[\s_*^{(i)}(s,Y_{\nu_2},X_{\nu_2},\nu_2) - \s_*^{(i)}(s,Y_{\nu_1},X_{\nu_1},\nu_1)]\,dW^{(i)}(s) \Big| \notag
\\&+\Big|\int_0^t[\s_*^{(c)}(s,Y_{\nu_2},X_{\nu_2},\nu_2) - \s_*^{(c)}(s,Y_{\nu_1},X_{\nu_1},\nu_1)]\,dW^{(c)}(s) \Big| \notag
\\&\lesssim_L\int_0^t|H_1*\nu_2(X_{\nu_2}) - H_1*\nu_2(X_{\nu_1})|\,ds \notag\\&+\int_0^t|H_1*\nu_2(X_{\nu_1}) - H_1*\nu_1(X_{\nu_1})|\,ds \notag
\\&+\int_0^t[|X_{\nu_2}-X_{\nu_1}| + |Y_{\nu_2} - Y_{\nu_1}|] \,ds\notag
\\&+\sup_{s\in[0,t]}\Big|\int_0^s[\s_*^{(i)}(z,Y_{\nu_2},X_{\nu_2},\nu_2) - \s_*^{(i)}(z,Y_{\nu_1},X_{\nu_1},\nu_1)]\,dW^{(i)}(z) \Big| \notag
\\&+\sup_{s\in[0,t]}\Big|\int_0^s[\s_*^{(c)}(z,Y_{\nu_2},X_{\nu_2},\nu_2) - \s_*^{(c)}(z,Y_{\nu_1},X_{\nu_1},\nu_1)]\,dW^{(c)}(z) \Big| \notag
\\&\lesssim_{C}e^{\g t}\int_0^t\sup_{s\in[0,z]} e^{-\g z}|X_{\nu_2} - X_{\nu_1}|\,ds + e^{\g t}d_{1,\g}(\nu_2,\nu_1) \notag
\\&+\sup_{s\in[0,t]}\Big|\int_0^t[\s_*(s,Y_{\nu_2},X_{\nu_2},\nu_2) - \s_*(s,Y_{\nu_1},X_{\nu_1},\nu_1)]\,dW(s) \Big| \notag
\\&+\sup_{s\in[0,t]}\Big|\int_0^s[\s_*^{(c)}(z,Y_{\nu_2},X_{\nu_2},\nu_2) - \s_*^{(c)}(z,Y_{\nu_1},X_{\nu_1},\nu_1)]\,dW^{(c)}(z) \Big|,
\label{T.contraction.estimate.2}
\end{align}
where $C=C(p,L,T,H_i(0),K_i(0))$, with $i\in\{1,2\}$.
If we exponentiate both sides by $p$ and take the expectation, we get again in the same way as in the proof of \ref{lemma:finite.well.posedness}
\begin{align*}\mathbb{E}[e^{-p\g s}|X_{\nu_2}(s)-X_{\nu_1}(s)|^p]
&\lesssim_{C}\int_0^t\sup_{z\in[0,s]}\mathbb{E}[e^{-p\g t}|X_{\nu_2}(z)-X_{\nu_1}(z)|^p]\,ds + \frac{1}{\g^p}(d_{p,\g}')^p(\nu_2,\nu_1),
\end{align*}    
so by Proposition \ref{prop:Gronwall.inequality} we have
\begin{equation*}\mathbb{E}[e^{-p\g s}|X_{\nu_2}(s)-X_{\nu_1}(s)|^p]
\lesssim_{C}\frac{1}{\g^p}(d_{p,\g}')^p(\nu_2,\nu_1),\end{equation*}
Thanks to the latter inequality, by taking the expectation $\mathbb{E}^{(c)}$ of both sides of \eqref{T.contraction.estimate.1} and exponentiating by $\frac{1}{p}$ we get
\begin{align*}d_{p,\g}'(\mT(\nu_2),\mT(\nu_1))\lesssim_{C}\frac{1}{\g}\cdot d_{p,\g}'(\nu_2,\nu_1);
\end{align*}
we finally get that for $\g$ big enough, $\mT$ is a contraction, so we conclude.

\end{proof}

\begin{remark}\label{rem:conditional.independence} In the absence of common noise, it is obvious that given $\mF-$independent initial data $(X_0^{(1)},Y_0^{(1)})$ and $(X_0^{(2)},Y_0^{(2)})$, the corresponding solutions are still independent for all times. In particular, the stochastic paths $X_0^{(1)}(t),\,X_0^{(2)}(t)$ are independent for each time. When there is the common noise, we must specify what we mean by independence. 

Intuitively, we would get independence if we start by $\mF-$independent initial data and we fix the realization of common noise. This can be formalized in terms of conditional independence given $\mF_t^{(c)}$: we say that $X^{(1)}(t)$ and $X^{(2)}(t)$ are conditionally independent given $\mF_t^{(c)}$ if for any bounded Borel-measurable functions $\ph_1,\,\ph_2\colon\R^d\longmapsto\R$, $\mF_t^{(c)}-$almost surely,
\begin{align*}&\mathbb{E}\Big[\ph_1(X^{(1)}(t))\cdot\ph_2(X^{(2)}(t))\Big|\,\mF_t^{(c)}\Big]
=\mathbb{E}\Big[\ph_1(X^{(1)}(t))\Big|\,\mF_t^{(c)}\Big]\cdot\mathbb{E}\Big[\ph_2(X^{(2)}(t))\Big|\,\mF_t^{(c)}\Big].
\end{align*}
Now, since we have found a unique solution $(X,Y)$ to \eqref{SDE.mean.field}-\eqref{Initial.data.mean.field}, by Remark \ref{rem:conditional.law} we get that starting from independent, identically distributed initial data, these two properties are preserved under the action of the random conditional measure flow.

\end{remark}

\section{The mean-field limit and the (conditional) propagation of chaos}

Now, we have all the tools to show that the discrete problem converge to the mean-field one:

\begin{theorem}[Mean-field limit]\label{thm:propagation.of.chaos}
Given Assumptions \ref{assumptions}, let $p\ge2$, $(W_n^{(i)})_{n\in\N}$ be a sequence respectively of independent idiosyncratic Brownian motions $W^{(c)}$ be the common Brownian motion being independent by each $W_n^{(i)}$, and let 
\begin{equation*}(X_n^{(0)})_{n\in\N}\subset L^p(\Om;\R^d),\,(Y^{(0)})\in(\R^d)^M\end{equation*}
be initial data such that they are independent, identically distributed random variables. Let $(X^{(N)},Y^{(N)})$ be the unique solution of the system \eqref{SDE.finite}-\eqref{Initial.data.finite} with initial data 
\begin{equation*}(X^{(0,N)},Y^{(0,N)})=(X_{n}^{(0)},Y^{(0)})_{1\le n\le N},\end{equation*}
and let $(X_n,Y)$ be the unique solution to the system \eqref{SDE.mean.field}-\eqref{Initial.data.mean.field} with initial data $(X_n^{(0)},Y^{(0)})$. Let $\mu^{(i)}:=\mu_n^{(i)}\in C([0,T];L^p(\Om;\mW_p(\R^d)))$ be the conditional law of each $X_n$ given $(\mF_t^{(c)})_{t\in[0,T]}$.
Then, the following two facts hold.

\medskip

$(i)$ For all $q\in[1,p)$ as in \eqref{law.of.large.numbers},
\begin{align}&\mathbb{E}\Big[\Big(\max_{1\le n\le N}\sup_{t\in[0,T]}|X_n(t)-X_n^{(N)}(t)|^q + \sup_{t\in[0,T]}|Y(t)-Y^{(N)}(t)|^q\Big)\Big]\lesssim_{q,p,L,T,\mu^{(i)},u}C(N,p,q),\label{propagation.of.chaos}
\\&\mathbb{E}[\mW_q^q(\mu^{(i)}(s),\mu_N(s))]\lesssim_{q,p,L,T,\mu^{(i)},u}C(N,p,q),\label{convergence.of.measures}
\end{align}
where $C(N,p,q)$ is that of Proposition \ref{prop:technical.estimates}.

$(ii)$ For $q=p$,
\begin{align}&\lim_{N\to\infty}\mathbb{E}\Big[\Big(\max_{1\le n\le N}\sup_{t\in[0,T]}|X_n(t)-X_n^{(N)}(t)|^p + \sup_{t\in[0,T]}|Y(t)-Y^{(N)}(t)|^p\Big)\Big]=0,\label{propagation.of.chaos.1}
\\&\lim_{N\to\infty}\mathbb{E}[\mW_p^p(\mu^{(i)}(s),\mu_N(s))]=0.\label{convergence.of.measures.1}
\end{align}
    
\end{theorem}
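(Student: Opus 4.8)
The plan is to follow Sznitman's synchronous coupling argument, adapted to the multiplicative and common noise. First I would realize both systems on one probability space driven by the \emph{same} Brownian motions $(W^{(c)},W_1^{(i)},\dots,W_N^{(i)})$ and the same initial data $(X_n^{(0)},Y^{(0)})$, so that $X_n^{(N)}$ and $X_n$ differ only through their coefficients. The central auxiliary object is the empirical measure of the mean-field copies, $\overline{\mu}_N(s):=\frac1N\sum_{j=1}^N\delta_{X_j(s)}$, which interpolates between $\mu_N$ and $\mu^{(i)}$. By Remark \ref{rem:conditional.independence}, conditionally on $\mF^{(c)}$ the processes $(X_j)_j$ are independent and identically distributed with common conditional law $\mu^{(i)}$, which is exactly what will allow me to invoke a (conditional) law of large numbers on $\overline{\mu}_N$, using the extension of \eqref{law.of.large.numbers} to the common-noise setting.

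Next I would write the coupled differences $X_n^{(N)}-X_n$ and $Y_m^{(N)}-Y_m$ from the integral formulations in Definition \ref{Def.conditional.solutions}, and decompose each interaction term through $\overline{\mu}_N$; for instance
\begin{align*}
H_1*\mu_N(s)(X_n^{(N)}) - H_1*\mu^{(i)}(s)(X_n)
&= [H_1*\mu_N(s)(X_n^{(N)}) - H_1*\mu_N(s)(X_n)] \\
&\quad + [H_1*\mu_N(s)(X_n) - H_1*\overline{\mu}_N(s)(X_n)] \\
&\quad + [H_1*\overline{\mu}_N(s)(X_n) - H_1*\mu^{(i)}(s)(X_n)].
\end{align*}
The first bracket is bounded by $L|X_n^{(N)}-X_n|$ via \eqref{convoluted.function.lipschitz}, the second by $L\,\mW_1(\mu_N,\overline{\mu}_N)\le\frac{L}{N}\sum_j|X_j^{(N)}-X_j|$ via \eqref{convoluted.operator.lipschitz.pointwise} and \eqref{Wasserstein.empirical.measure}, and the third, $L\,\mW_1(\overline{\mu}_N,\mu^{(i)})$, is left as the genuine fluctuation term. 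The same splitting applies to the herder drift $K_2*(\cdot)$ and, crucially, to the noise coefficients $\s_*^{(i)},\s_*^{(c)}$ through \eqref{idiosyncratic.noise.is.lipschitz} and \eqref{common.noise.is.lipschitz}.

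Setting $A_N(t):=\mathbb{E}\big[\max_n\sup_{s\le t}|X_n^{(N)}(s)-X_n(s)|^q + \sup_{s\le t}|Y^{(N)}(s)-Y(s)|^q\big]$, I would raise the differences to the power $q$, take the supremum over $s\le t$, then the maximum over $n$ and expectation. The drift contributions are handled by H\"older, while the stochastic integrals are controlled by the Burkholder-Davis-Gundy inequality \eqref{Doob.maximal.inequality}, which bounds $\mathbb{E}[\sup|\int\cdots dW|^q]$ by $\mathbb{E}[(\int|\cdots|^2\,ds)^{q/2}]$; the Lipschitz bounds on the noises feed back exactly $A_N$ and $\mW_q(\overline{\mu}_N,\mu^{(i)})$. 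This yields $A_N(t)\lesssim\int_0^tA_N(s)\,ds+\int_0^t\mathbb{E}[\mW_q^q(\overline{\mu}_N(s),\mu^{(i)}(s))]\,ds$, so Proposition \ref{prop:Gronwall.inequality} closes the estimate once the fluctuation term is bounded. For the latter I would condition on $\mF^{(c)}$, apply \eqref{law.of.large.numbers} to the conditionally i.i.d.\ samples $X_j(s)$ to get $\mathbb{E}[\mW_q^q(\overline{\mu}_N,\mu^{(i)})\mid\mF^{(c)}]\lesssim C(N,p,q)\,M_p^{q/p}(\mu^{(i)}(s))$, then use the tower property and bound the $M_p$-factor uniformly via \eqref{mean.field.solution.boundedness} of Lemma \ref{thm:mean.field.well.posedness}. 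This proves \eqref{propagation.of.chaos}, and \eqref{convergence.of.measures} follows from the triangle inequality $\mW_q(\mu^{(i)},\mu_N)\le\mW_q(\mu^{(i)},\overline{\mu}_N)+\mW_q(\overline{\mu}_N,\mu_N)$, the last term being $\le\max_n|X_n^{(N)}-X_n|$.

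For part $(ii)$ with $q=p$, since \eqref{law.of.large.numbers} provides no rate at the critical exponent, I would instead invoke the qualitative Wasserstein law of large numbers \eqref{strong.law.large.numbers.wasserstein.2}, applied conditionally on $\mF^{(c)}$, to obtain $\mathbb{E}[\mW_p^p(\overline{\mu}_N,\mu^{(i)})]\to0$, and re-run the same Gr\"onwall argument to deduce \eqref{propagation.of.chaos.1} and \eqref{convergence.of.measures.1}. The main obstacle is the treatment of the noise terms: unlike the additive case of \cite{ACS}, they do \emph{not} cancel in the difference, so they must be absorbed through Burkholder-Davis-Gundy at the lower exponent $q$ (Doob's inequality being insufficient when the data lie only in $L^2$), while simultaneously carrying the common-noise coupling through a \emph{conditional} law of large numbers, for which the conditional-independence structure of Remark \ref{rem:conditional.independence} is indispensable.
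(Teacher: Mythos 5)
Your proposal follows essentially the same route as the paper: a synchronous coupling, the three-term decomposition of the drift and noise coefficients through the empirical measure of the mean-field copies (the paper's $\mu_N^{(i)}$, your $\overline{\mu}_N$), Burkholder--Davis--Gundy for the stochastic integrals, Gr\"onwall to close the estimate, the (conditional) quantitative law of large numbers \eqref{law.of.large.numbers} for the fluctuation term with the $M_p$-factor controlled by \eqref{mean.field.solution.boundedness}, and the qualitative version \eqref{strong.law.large.numbers.wasserstein.2} for the critical case $q=p$. The argument is correct and matches the paper's proof in all essential respects.
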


\begin{proof}

$(i)$ Let us consider $\displaystyle q\in[1,p)$ first, where $q$ satisfies the cases \eqref{law.of.large.numbers}.
Let us fix $n,N\in\N$ and $t\in[0,T]$, and let us call $\mu_N^{(i)}$ the empirical measure associated to $(X_n)_{1\le n\le N}$. 
By Definition \ref{Def.conditional.solutions}, neglecting dependence by time and $\om\in\Om$ for simplicity of notations, we have
\begin{align}X_n(t)-X_n^{(N)}(t)
&=\int_0^t[H_1*\mu_n^{(i)}(X_n) - H_1*\mu_N(X_n^{(N)})]\,ds \notag
\\&+\frac{1}{M}\sum_{m=1}^M\int_0^t[K_1(Y_m-X_n) - K_1(Y_m^{(N)} - X_n^{(N)})]\,ds \notag
\\&+\int_0^t[\s_*^{(i)}(s,Y,X_n,\mu_n^{(i)}) - \s_*^{(i)}(s,Y^{(N)},X_n^{(N)},\mu_N)]\,dW_n^{(i)}(s) \notag
\\&+\int_0^t[\s_*^{(c)}(s,Y,X_n,\mu_n^{(i)}) - \s_*^{(c)}(s,Y^{(N)},X_n^{(N)},\mu_N)]\,dW^{(c)}(s)
\label{x}
\end{align}
and
\begin{align}Y_m(t)-Y_m^{(N)}(t)
&=\int_0^t[K_2*\mu_n^{(i)}(Y_m) - K_2*\mu_N(Y_m^{(N)})]\,ds \notag
\\&+\frac{1}{M}\sum_{j=1}^M\int_0^t[H_2(Y_m-Y_j) - H_2(Y_m^{(M)} - Y_j^{(N)})]\,ds\notag
\\& +\int_0^t[u_m(s,Y,\mu_n^{(i)}) - u_m(s,Y^{(N)},\mu_N)]\,ds, \label{y}
\end{align}
Let us start by estimating \eqref{x}. By \eqref{K.L.Lipschitz}, we get
\begin{align*}|X_n(t)-X_n^{(N)}(t)|^q
&\lesssim_{q,L,T} \underbrace{\int_0^t|H_1*(\mu_N^{(i)}-\mu_N)(X_n^{(N)})|^q\,ds}_{=:(I)}
\\&+\underbrace{\int_0^t|H_1*\mu_N^{(i)}(X_n^{(N)}) - H_1*\mu_N^{(i)}(X_n)|^q\,ds}_{=:(II)}
\\&+\underbrace{\int_0^t|H_1*(\mu_N^{(i)}-\mu_n^{(i)})(X_n)|^q\,ds}_{=:(III)}
\\&+\int_0^t\Big[\max_{1\le n\le N}\sup_{z\in[0,s]}|X_n(z)-X_n^{(N)}(z)|^q + \sup_{z\in[0,s]}|Y(z)-Y^{(N)}(z)|^q \Big]\,ds\Big\}
\\&+\sup_{s\in[0,t]}\Big|\int_0^s[\s_*^{(i)}(z,Y,X_n,\mu_n^{(i)}) - \s_*^{(i)}(z,Y^{(N)},X_n^{(N)},\mu_N)]\,dW_n^{(i)}(z)\Big|^q
\\&+\sup_{s\in[0,t]}\Big|\int_0^s[\s_*^{(c)}(z,Y,X_n,\mu_n^{(i)}) - \s_*^{(c)}(z,Y^{(N)},X_n^{(N)},\mu_N)]\,dW^{(c)}(z)\Big|^q.
\end{align*}
The term $(II)$ can be immediately estimated by using \eqref{convoluted.function.lipschitz}:
\begin{equation*}(II)\lesssim_{q,L}\int_0^t\max_{1\le n\le N}\sup_{z\in[0,s]}|X_n(z)-X_n^{(N)}(z)|^q\,ds.
\end{equation*}
The term $(I)$ can be estimated $\mathbb{P}^{(c)}-$almost surely by \eqref{K.L.Lipschitz}, \eqref{convoluted.operator.lipschitz.pointwise} and \eqref{Wasserstein.empirical.measure}:
\begin{equation*}(I)\lesssim_{q,L}\int_0^t\mW_1^q(\mu_N^{(i)}(s),\mu_N(s))\,ds\lesssim_{q,L}\int_0^t\max_{1\le n\le N}\sup_{z\in[0,s]}|X_n(z)-X_n^{(N)}(z)|^q\,ds;
\end{equation*}
similarly,
\begin{equation*}(III)\lesssim_{q,L}\int_0^t\mW_1^q(\mu_N^{(i)}(s),\mu_n^{(i)}(s))\,ds.
\end{equation*}
Putting the estimates for $(I), (II), (III)$ together, we get
\begin{align*}\max_{1\le n\le N}\sup_{s\in[0,t]}|X_n(s)-X_n^{(N)}(s)|^q
&\lesssim_{q,L,T} \int_0^t\max_{1\le n\le N}\sup_{z\in[0,s]}|X_n(z)-X_n^{(N)}(z)|^q\,ds
\\&+ \int_0^t\sup_{z\in[0,s]}|Y(z)-Y^{(N)}(z)|^q\,ds
\\&+\int_0^t\mW_1^q(\mu_N^{(i)}(s),\mu_n^{(i)}(s))\,ds
\\&+\sup_{s\in[0,t]}\Big|\int_0^s[\s_*^{(i)}(z,Y,X_n,\mu_n^{(i)}) - \s_*^{(i)}(z,Y^{(N)},X_n^{(N)},\mu_N)]\,dW_n^{(i)}(z)\Big|^q
\\&+\sup_{s\in[0,t]}\Big|\int_0^s[\s_*^{(c)}(z,Y,X_n,\mu_n^{(i)}) - \s_*^{(c)}(z,Y^{(N)},X_n^{(N)},\mu_N)]\,dW^{(c)}(z)\Big|^q.
\end{align*}
Now, let us estimate \eqref{y}. Arguing as done for \eqref{x}, we have
\begin{align*}|Y_{m}(t)-Y_{m}^{(N)}(t)|^q
&\lesssim_{q,T}\int_0^t|K_2*\mu_n^{(i)}(Y_m) - K_2*\mu_N(Y_m^{(N)})|^q\,ds
\\&+\frac{1}{M}\sum_{j=1}^M\int_0^t|H_2(Y_m-Y_j) - H_2(Y_m^{(N)} - Y_j^{(N)})|^q\,ds
\\&+\int_0^t|u_m(s,Y,\mu_n^{(i)}) - u_m(s,Y^{(N)},\mu_N)|^q\,ds
\\&\lesssim_{q,T,L}\int_0^t\sup_{z\in[0,s]}|Y(z)-Y^{(N)}(z)|^q\,ds
\\&+\int_0^t\mW_1^q(\mu_n^{(i)}(s),\mu_N^{(i)}(s))\,ds 
\\&+\int_0^t \max_{1\le n\le N}\sup_{z\in[0,s]}|X_n(z)-X_n^{(N)}(z)|^q\,ds\Big\}
\end{align*}
In conclusion, calling
\begin{equation*}G_N(t):=\max_{1\le n\le N}\sup_{s\in[0,t]}|X_n(s)-X_n^{(N)}(s)|^q + \sup_{s\in[0,t]}|Y(s)-Y^{(N)}(s)|^q,
\end{equation*}
we have
\begin{align*}G_N(t)&\lesssim_{q,L,T}\int_0^t G_N(s)\,ds + \int_0^t\mW_1^q(\mu_n^{(i)}(s),\mu_N^{(i)}(s))\,ds
\\&+\sup_{s\in[0,t]}\Big|\int_0^s[\s_*^{(i)}(z,Y,X_n,\mu_n^{(i)}) - \s_*^{(i)}(z,Y^{(N)},X_n^{(N)},\mu_N)]\,dW_n^{(i)}(z)\Big|^q
\\&+\sup_{s\in[0,t]}\Big|\int_0^s[\s_*^{(c)}(z,Y,X_n,\mu_n^{(i)}) - \s_*^{(c)}(z,Y^{(N)},X_n^{(N)},\mu_N)]\,dW^{(c)}(z)\Big|^q;
\end{align*}
taking the expectation, as usual we have
\begin{align*}\mathbb{E}[G_N(t)]
&\lesssim_{q,L,T}\int_0^t \mathbb{E}[G_N(s)]\,ds + \int_0^t\mathbb{E}[\mW_1^q(\mu_n^{(i)}(s),\mu_N^{(i)}(s))]\,ds
\end{align*}
and calling $v_N(t):=\mathbb{E}[G_N(t)]$ and $I_N:=\int_0^T\mathbb{E}[\mW_1^q(\mu_n^{(i)}(s),\mu_N^{(i)}(s))]\,ds$, by Proposition \eqref{prop:Gronwall.inequality} we get that for all $t\in[0,T]$,
\begin{equation}\label{v}v_N(t)\lesssim_{q,L,T}I_N.
\end{equation}
We now want to estimate $I_N$. Since $\displaystyle q\in[1,p)$, by \eqref{Wasserstein.sort} and \eqref{law.of.large.numbers} we have that $\mathbb{P}-$almost surely,
\begin{equation*}
\mathbb{E}[\mW_1^q(\mu_n^{(i)}(s),\mu_N^{(i)}(s))]\lesssim_{p,q}M_p^p(\mu_n^{(i)}(s))\cdot C(N,p,q),
\end{equation*}
where $M_p^p(\mu_n^{(i)})$ satisfies the boundedness estimate \eqref{mean.field.solution.boundedness} while $C(N,p,q)\to0$ as $N\to+\infty$, and so by Lebesgue Dominated Convergence Theorem we finally get \eqref{propagation.of.chaos}.

\medskip

$(ii)$ The estimate \eqref{v} converges to $0$ by reasoning as in Remark \ref{remark:non.quantitative.law.large.numbers}; the fact that $I_N\to0$ follows by boundedness estimates \eqref{mean.field.solution.boundedness} (which gives uniform estimates in time), and so one can apply Lebesgue Dominated Convergence Theorem.

\end{proof}

\begin{remark}\label{remark} As for quantitative estimates \eqref{propagation.of.chaos} and \eqref{convergence.of.measures}, in all the cases $q\in[1,p)$ not covered by Proposition \ref{prop:technical.estimates} can be obtained through Jensen Inequality. 
\end{remark}

\begin{remark} We notice that since $\frac{d}{2}\in\{\frac12,1,\frac32\}$, when $q=1$ we have by Proposition \ref{prop:technical.estimates} that if $d=1$, then $I_N\lesssim_{p} N^{-\frac12}$; if $d=3$, then $I_N\lesssim_{p}N^{-\frac{1}{3}}$; if else $d=2$, then one has that for $p>2$, $I_N\lesssim_p N^{-\frac12}\log N$, while for $p=2$, using Remark \ref{remark} one has that for any $q>1$, $I_N\lesssim N^{-\frac{1}{2q}}$. In this way, we recover the necessary estimate for the mean-field limit.

\end{remark}

The mean-field limit theorem we have just proved has the following striking consequence: the propagation of chaos if there is no common noise, otherwise the conditional propagation of chaos.

\begin{corollary}[(Conditional) Propagation of chaos]\label{cor:propagation.of.chaos}

In the notations and hypotheses of Theorem \ref{thm:propagation.of.chaos}, let us consider $k\ge2$ observables $(\ph_i)_{1\le i\le k}\subset C_b(\R^d)\cap\text{Lip}(\R^d)$. Then, for all $t\in[0,T]$ and for all set of indices $(n_i)_{1\le i\le k}$,
\begin{align}&\lim_{N\to+\infty}\mathbb{E}\Big[\prod_{i=1}^k\ph_i(X_{n_i}^{(N)}(t))\Big]=\prod_{i=1}^k\mathbb{E}[\ph_i(X(t))]=\prod_{i=1}^k\la\mu(t),\ph_i\ra \qquad\qquad\qquad\quad\quad\text{if}\,\,\s_*^{(c)}\equiv0,\label{propagation.of.chaos.equation}
\\&\lim_{N\to+\infty}\mathbb{E}\Big[\prod_{j=1}^k\ph_j(X_{n_j}^{(N)}(t))\Big|\mF_t^{(c)}\Big]=\prod_{j=1}^k\mathbb{E}[\ph_j(X(t))|\mF_t^{(c)}]=\prod_{j=1}^k\la\mu^{(i)}(t),\ph_j\ra\qquad\,\,\text{otherwise.}\label{propagation.of.chaos.equation.common}
\end{align}

\end{corollary}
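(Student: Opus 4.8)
The plan is to derive both limits directly from the mean-field convergence of Theorem~\ref{thm:propagation.of.chaos}, combined with the (conditional) independence of the limit processes recorded in Remark~\ref{rem:conditional.independence}. The analytic content is entirely contained in the estimate \eqref{propagation.of.chaos}; everything else is a matter of organising the independence structure correctly.

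First I would replace the discrete particles by the mean-field ones. Since each $\ph_i$ is bounded and Lipschitz, a telescoping estimate with $a_i:=\ph_i(X_{n_i}^{(N)}(t))$ and $b_i:=\ph_i(X_{n_i}(t))$ gives
\begin{equation*}
\Big|\prod_{i=1}^k a_i - \prod_{i=1}^k b_i\Big|\le\sum_{i=1}^k[\ph_i]_{\text{Lip}}\Big(\prod_{l\ne i}\|\ph_l\|_\infty\Big)\big|X_{n_i}^{(N)}(t)-X_{n_i}(t)\big|\lesssim_{\ph}\sum_{i=1}^k\big|X_{n_i}^{(N)}(t)-X_{n_i}(t)\big|.
\end{equation*}
Taking expectations and using \eqref{propagation.of.chaos} with $q=1$ (admissible as $p\ge2>1$), the right-hand side is controlled by $k\,\mathbb{E}[\max_{1\le n\le N}\sup_{t\in[0,T]}|X_n(t)-X_n^{(N)}(t)|]\lesssim C(N,p,1)\to0$. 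Hence, writing $A_N:=\prod_{j=1}^k\ph_j(X_{n_j}^{(N)}(t))$ and $A:=\prod_{j=1}^k\ph_j(X_{n_j}(t))$, we obtain $\|A_N-A\|_{L^1(\Om)}\to0$.

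In the zero-common-noise case $\s_*^{(c)}\equiv0$, the flow $\mu^{(i)}=\mu$ is deterministic and the herders $Y$ are then deterministic, so by Remark~\ref{rem:conditional.independence} the mean-field processes $(X_{n_i}(t))_{1\le i\le k}$ are genuinely independent and identically distributed with common law $\mu(t)=\text{Law}(X(t))$. Consequently $\mathbb{E}[A]=\prod_{i=1}^k\mathbb{E}[\ph_i(X(t))]=\prod_{i=1}^k\la\mu(t),\ph_i\ra$, and since $\mathbb{E}[A_N]\to\mathbb{E}[A]$ by the $L^1$-convergence just established, this is exactly \eqref{propagation.of.chaos.equation}. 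In the common-noise case, I would instead pass the $L^1$-convergence above through the conditional expectation: since $\mathbb{E}[\,\cdot\,|\mF_t^{(c)}]$ is an $L^1$-contraction,
\begin{equation*}
\big\|\mathbb{E}[A_N\,|\,\mF_t^{(c)}]-\mathbb{E}[A\,|\,\mF_t^{(c)}]\big\|_{L^1(\Om)}\le\|A_N-A\|_{L^1(\Om)}\longrightarrow0.
\end{equation*}
By the conditional independence given $\mF_t^{(c)}$ of the $(X_{n_j}(t))_j$ (Remark~\ref{rem:conditional.independence}) and the fact that $\mu^{(i)}(t)$ is the conditional law of each $X_{n_j}(t)$ given $\mF_t^{(c)}$, the limit factorises as $\mathbb{E}[A\,|\,\mF_t^{(c)}]=\prod_{j=1}^k\mathbb{E}[\ph_j(X(t))\,|\,\mF_t^{(c)}]=\prod_{j=1}^k\la\mu^{(i)}(t),\ph_j\ra$, which yields \eqref{propagation.of.chaos.equation.common}, the convergence taking place in $L^1(\Om)$ (hence in probability, and almost surely along a subsequence).

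The main obstacle is not the estimate, which is immediate from Theorem~\ref{thm:propagation.of.chaos}, but the careful bookkeeping of the independence structure underlying Remark~\ref{rem:conditional.independence}: everything hinges on the observation that, conditionally on $\mF_t^{(c)}$, the coefficients driving the equations for the $X_{n_j}$ (namely $\mu^{(i)}$, the herders $Y$, and the control $u$) are frozen, so that the only residual randomness comes from the mutually independent idiosyncratic Brownian motions $W_{n_j}^{(i)}$ and the independent, identically distributed initial data; the limit processes therefore inherit conditional independence from their drivers. One must also keep track of the mode of convergence, since the conditional statement \eqref{propagation.of.chaos.equation.common} is naturally an $L^1(\Om)$ limit rather than a pointwise one.
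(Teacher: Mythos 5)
Your proposal is correct and follows essentially the same route as the paper: a telescoping Lipschitz estimate reduces the product to the particle-wise errors controlled by \eqref{propagation.of.chaos}, and the factorisation of the limit comes from the (conditional) independence recorded in Remark~\ref{rem:conditional.independence}. The only cosmetic difference is that you handle the common-noise case via the $L^1$-contraction property of $\mathbb{E}[\,\cdot\,|\mF_t^{(c)}]$, while the paper tests against indicators $\mathbf{1}_A$ with $A\in\mF_t^{(c)}$; these are equivalent readings of the same limit.
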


\begin{proof}
Let us suppose that $\s_*^{(c)}\equiv0$. By Remark \ref{remark:no.common.noise.gives.deterministic.law}, the law $\mu$ for each $X_n$ is deterministic.

Let then $k=2$. Without any loss of generality, let us suppose $n_1:=1,\,n_2:=2$, let $B$ be the constant bounding both $\ph_1,\,\ph_2$ and let $L$ their Lipschitz constant. Then, since $X_1(t)$ and $X_2(t)$ are independent by the fact that their initial data are independent, we have
\begin{align*}&\mathbb{E}[\ph_1(X_1^{(N)}(t))\ph_2(X_2^{(N)}(t))] - \mathbb{E}[\ph_1(X_1(t))]\cdot\mathbb{E}[\ph_2(X_2(t))]
\\&\qquad\qquad=\mathbb{E}[\ph_1(X_1^{(N)}(t))\ph_2(X_2^{(N)}(t)) - \ph_1(X_1(t))\ph_2(X_2(t))]
\\&\qquad\qquad\le\mathbb{E}[|\ph_1(X_1^{(N)}(t))|\cdot|\ph_2(X_2^{(N)}(t)) - \ph_2(X_2(t))|]
\\&\qquad\qquad\quad+\mathbb{E}[|\ph_2(X_2(t))|\cdot|\ph_1(X_1^{(N)}(t)) - \ph_1(X_1(t))|]
\\&\qquad\qquad\lesssim_{B,L}\cdot\mathbb{E}[|X_1^{(N)}(t) - X_1(t)| + |X_2^{(N)}(t) - X_2(t)|],
\end{align*}
which tends to $0$ by Theorem \ref{thm:propagation.of.chaos}. For $k>2$, one can proceed in the same way as in the case $k=2$. 

Let us now suppose that $\s_*^{(c)}\not\equiv0$.
Fixed any $\om\in\Om$, we have by definition of strong solutions that $\mathbb{E}[\ph_1(X(t))|\mF_t^{(c)}]$ and $\mathbb{E}[\ph_2(X(t))|\mF_t^{(c)}]$ are conditionally $\mF_t^{(c)}-$independent given $\mF_t^{(c)}$ by Remark \ref{rem:conditional.independence}, and so we can repeat the same computation above by replacing the expectation operator $\mathbb{E}$ with $\mathbb{E}_A:=\mathbb{E}[\mathbf{1}_A\cdot]$, with $A\in\mF_t^{(c)}$, applied to the conditional expectations.

\end{proof}

\section{The Fokker-Plack equation for the mean-field problem}

In Theorem \ref{thm:propagation.of.chaos}, we have proved that the herd statistical distribution converges to the law of the mean-field process in the sense of \eqref{convergence.of.measures}. We want to prove that the measure associated to the solution of \eqref{SDE.mean.field}-\eqref{ODE.mean.field} can be made explicit, and it is indeed a measure-valued solution of the associated Fokker-Planck equation. 

In this section, we will always consider notations and assumptions of Theorem \ref{thm:mean.field.well.posedness}, and we also introduce the notations
\begin{align}&V(t,\mu,x):=H_1*\mu(t)(x) + \frac{1}{M}\sum_{m=1}^M K_1(Y_m(t)-x),\label{V.def}
\\&\s_*:=\s_*^{(i)}(\s_*^{(i)})^T + \s_*^{(c)}(\s_*^{(c)})^T\label{sigma.star.def}
\end{align}
We can establish the Fokker-Planck equations for the mean-field problem \eqref{SDE.mean.field}-\eqref{Initial.data.mean.field}:

\begin{theorem}[Fokker-Planck equations for the mean-field problem]\label{thm:Fokker.Planck.via.Ito}
In notations and assumptions of Lemma \ref{thm:mean.field.well.posedness}, we have that $\mathbb{P}_0-$almost surely, $\mu$ solves \eqref{parabolic.equation.common}-\eqref{parabolic.initial.data.common}, in the sense that for all $\ph\in C_c^{\infty}(\R^d)$, $\mathbb{P}^{(0)}-$almost surely
\begin{align}\la\mu^{(i)}(t),\ph\ra-\la\mu_0^{(i)},\ph\ra &= \int_0^t\la\mu^{(i)}(s),\grad\ph\cdot V(s,\mu^{(i)}(s),\cdot) + \frac12\text{tr}\{H(\ph)\s_*(s,\mu^{(i)}(s),\cdot)\}\ra\,ds \notag
\\&+\int_0^t\la\mu^{(i)}(s),\grad\ph\cdot\s_*^{(c)}(s,\mu^{(i)}(s),\cdot)\,dW^{(c)}(s)\ra,\label{duality.fokker.planck.common}
\end{align}
where $\la\mu^{(i)}(t),\ph\ra$ is $\mathbb{P}-$almost surely the average of $\ph$ over $\Om$ given $\mF_t^{(c)}$.

In particular, if $\s_*^{(c)}\equiv0$, then the Fokker-Planck equation reduces to the deterministic PDE Cauchy Problem \eqref{Deterministic.Fokker.Planck.PDE}-\eqref{Deterministic.Fokker.Planck.Initial.datum}.

\end{theorem}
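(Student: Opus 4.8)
The plan is to apply the Itô formula to the scalar process $\varphi(X(t))$ along the mean-field SDE \eqref{strong.solution.X.mean.field.common} and then to project the resulting identity onto the common-noise filtration $(\mF_t^{(c)})_t$, exploiting that, by Definition \ref{Def.conditional.solutions}$(ii)$ and Remark \ref{rem:conditional.law}, one has $\la\mu^{(i)}(t),\varphi\ra=\mathbb{E}[\varphi(X(t))\mid\mF_t^{(c)}]$ $\mathbb{P}-$almost surely. I first fix $\varphi\in C_c^\infty(\R^d)$ and record that, since the controls and the flow $\mu^{(i)}$ are $\mF^{(c)}-$adapted, the herders process $Y$ is itself $\mF^{(c)}-$adapted, so that in $\s_*^{(c)}(s,Y(s),X(s),\mu^{(i)}(s))$ the only factor carrying idiosyncratic randomness is $X(s)$. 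Applying Proposition \ref{prop:Ito.formula} with the $2d-$dimensional Brownian motion $(W^{(i)},W^{(c)})$ and using that its two blocks are independent, the second-order term combines the two diffusions into exactly $\s_*$ of \eqref{sigma.star.def} and the first-order term into $V$ of \eqref{V.def}; thus
\[
\varphi(X(t))-\varphi(X^{(0)})=\int_0^t\Big[\grad\varphi\cdot V+\tfrac12\text{tr}(H(\varphi)\s_*)\Big](s,\mu^{(i)}(s),X(s))\,ds+I_i(t)+I_c(t),
\]
where $I_i,I_c$ are the Itô integrals against $W^{(i)}$ and $W^{(c)}$ of $\grad\varphi(X)\cdot\s_*^{(i)}$ and $\grad\varphi(X)\cdot\s_*^{(c)}$ respectively. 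All integrands are bounded and integrable because $\varphi\in C_c^\infty$ and by the Lipschitz bounds \eqref{idiosyncratic.noise.is.lipschitz}, \eqref{common.noise.is.lipschitz} together with the moment bound \eqref{mean.field.solution.boundedness}.

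Next I take $\mathbb{E}[\,\cdot\mid\mF_t^{(c)}]$ of this identity. The left-hand side becomes $\la\mu^{(i)}(t),\varphi\ra-\la\mu_0^{(i)},\varphi\ra$. For the $ds-$integral I would use a conditional Fubini theorem together with the key identity, valid for $s\le t$,
\[
\mathbb{E}[g(X(s))\mid\mF_t^{(c)}]=\mathbb{E}[g(X(s))\mid\mF_s^{(c)}]=\la\mu^{(i)}(s),g\ra,
\]
which holds because $X(s)$ depends on the common noise only through $(W^{(c)}_r)_{r\le s}$, while the increments $W^{(c)}_r-W^{(c)}_s$ $(r>s)$ and the idiosyncratic noise $W^{(i)}$ are independent of $\mF_s^{(c)}$; this is precisely the setting of the Freezing Lemma \ref{lemma:freezing}. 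This produces the drift and trace integrals on the right-hand side of \eqref{duality.fokker.planck.common}.

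The core difficulty is the treatment of the two stochastic integrals under the conditioning. For $I_i$ I claim $\mathbb{E}[I_i(t)\mid\mF_t^{(c)}]=0$: since $W^{(i)}$ is independent of $\mF_t^{(c)}$ it remains a Brownian motion conditionally on $\mF_t^{(c)}$, so the integral against it retains zero conditional mean; I make this rigorous by approximating $\grad\varphi(X)\cdot\s_*^{(i)}$ by simple processes, applying the Freezing Lemma to each increment and passing to the limit with the Itô isometry \eqref{ito.isometry} and Proposition \ref{prop:Burkholder.Davis.Gundy.Inequality}. For $I_c$ I need a \emph{conditional stochastic Fubini} step: approximating the integrand by simple processes $\sum_k\xi_{t_k}\mathbf{1}_{[t_k,t_{k+1})}$, each increment $W^{(c)}_{t_{k+1}}-W^{(c)}_{t_k}$ is $\mF_t^{(c)}-$measurable and may be factored out of $\mathbb{E}[\,\cdot\mid\mF_t^{(c)}]$, while $\mathbb{E}[\xi_{t_k}\mid\mF_t^{(c)}]=\la\mu^{(i)}(t_k),\grad\varphi\cdot\s_*^{(c)}\ra$ by the identity above; passing to the limit (again controlled by \eqref{mean.field.solution.boundedness} and Proposition \ref{prop:Burkholder.Davis.Gundy.Inequality}) yields exactly the common-noise integral in \eqref{duality.fokker.planck.common}. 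This interchange of conditional expectation and Itô integration is the main obstacle and the only genuinely nontrivial point.

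Finally, the initial term is $\la\mu_0^{(i)},\varphi\ra=\mathbb{E}[\varphi(X^{(0)})]=\la\text{Law}(X^{(0)}),\varphi\ra$ since $\mF_0^{(c)}$ is trivial, matching \eqref{parabolic.initial.data.common}. Collecting the four contributions gives \eqref{duality.fokker.planck.common} $\mathbb{P}^{(c)}-$almost surely for the fixed $\varphi$; choosing a countable set dense in $C_c^\infty(\R^d)$ with a common null set, and using the continuity of both sides in $\varphi$, promotes it to hold simultaneously for all test functions. In the case $\s_*^{(c)}\equiv0$ there is no common noise, so by Remark \ref{remark:no.common.noise.gives.deterministic.law} the measure $\mu^{(i)}$ is deterministic, the integral $I_c$ is absent, and taking ordinary expectation reduces \eqref{duality.fokker.planck.common} to the weak formulation of the deterministic Cauchy problem \eqref{Deterministic.Fokker.Planck.PDE}-\eqref{Deterministic.Fokker.Planck.Initial.datum}.
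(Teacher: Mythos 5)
Your proposal follows essentially the same route as the paper: apply the Itô formula to $\ph(X(t))$ along the mean-field SDE, identify the drift and trace terms with $V$ and $\s_*$, and then take $\mathbb{E}[\,\cdot\mid\mF_t^{(c)}]$, killing the $W^{(i)}$-integral and retaining the $W^{(c)}$-integral. The only difference is that you spell out carefully (via the Freezing Lemma and simple-process approximation) the interchange of conditioning with the time integral and the two stochastic integrals, which the paper compresses into the single phrase ``by Fubini Theorem''; your version is a correct and more detailed account of the same argument.
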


\begin{proof}
We will prove only the existence of measure-valued solutions for \eqref{parabolic.equation.common}-\eqref{parabolic.initial.data.common}, since the zero-common noise is a special case.

Let us take any $\ph\in C_b^2(\R^d)$. Given the unique strong solution $(X,Y)$ for \eqref{SDE.mean.field}-\eqref{Initial.data.mean.field} and the process $\ph(X)$, by \eqref{Ito.formula} its \^Ito differential is
\begin{align*}d\ph(X)&=\Big\{\frac12\text{tr}\{H(\ph)[\s_*^{(i)}(\s_*^{(i)})^T + \s_*^{(c)}(\s_*^{(c)})^T]\}(X) + \grad\ph(X)\cdot V\Big\}dt
\\&+\grad\ph(X)\cdot\s_*^{(i)}dW^{(i)} + \grad\ph(X)\cdot\s_*^{(c)}dW^{(c)},
\end{align*}
integrating both sides, we get
\begin{align*}\ph(X(t))-\ph(X^{(0)}) 
&= \int_0^t\Big\{\frac12\text{tr}\{H(\ph)[\s_*^{(i)}(\s_*^{(i)})^T + \s_*^{(c)}(\s_*^{(c)})^T]\}(X)+ \grad\ph(X)\cdot V \Big\}ds
\\&+\int_0^t \grad\ph(X)\cdot\s_*^{(i)}dW^{(i)}(s) + \int_0^t \grad\ph(X)\cdot\s_*^{(c)}dW^{(c)}(s);
\end{align*}
applying the conditional expectation operator $\mathbb{E}[\cdot|\mF_t^{(c)}]$ and observing that the stochastic integral in $dW^{(i)}$ in the latter identity is a zero-average martingale, by Fubini Theorem we get \eqref{duality.fokker.planck.common}.

\end{proof}

Let us now focus on the pure idiosyncratic noise case $\s_*^{(c)}\equiv0$. In this setting, we simplify the notation $\s_*=\s_*^{(i)}(\s_*^{(i)})^T$, and the Feynman-Kac Formula holds:
\begin{lemma}[Feynman-Kac formula]\label{lemma:Feynman.Kac}

Given Assumptions \ref{assumptions} and any fixed $T>0$, let us fix a measure-valued function $\mu\in C([0,T];\mW_p(\R^d))$ and let us consider the Kolmogorov Backward Problem
\begin{align}&\pa_t u + \frac12\text{tr}(\s_*H(u))+ V\cdot\grad u=0,\qquad\text{in}\quad[0,T)\times\R^d, \label{Parabolic.problem}
\\& u|_{t=T}=\ph\in C_b^2(\R^d), \label{Parabolic.initial.datum}
\end{align}
where $C_b^2(\R^d)$ is the space of the $C^2$ functions which are bounded in $\R^d$. Denoting by $C^{1,2}([0,T)\times\R^d)$ the space of functions being $C^1$ in time and $C^2$ in space, there exists a solution $u\in C([0,T]\times\R^d)\cap C^{2,1}([0,T)\times\R^d)$ given by
\begin{equation}\label{feynman.kac}u(t,x)=\mathbb{E}^{(x,t)}[\ph(\xi_T)],
\end{equation}
where $\mathbb{E}^{(x,t)}$ is the expectation operator which is associated to the diffusion process $\xi$ of starting point $(x,t)\in\R^d\times[0,T]$ and generated by the operator $\mathcal{L}:=\frac12\text{tr}(\s_*H) + V\cdot\grad$; precisely, $\xi$ is the unique solution to the SDE
\begin{align}&d\xi(t)=V(t,\mu(t),\xi(t))dt + \s_*^{(i)}(t,\mu(t),\xi(t))dW(t), \notag
\\&\xi(0)=x \label{sde.markov}
\end{align}

\end{lemma}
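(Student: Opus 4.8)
The plan is to construct $u$ directly from the probabilistic formula \eqref{feynman.kac} and then verify, via the It\^o formula, that it solves the backward Kolmogorov problem \eqref{Parabolic.problem}-\eqref{Parabolic.initial.datum}. First I would fix a starting point $(t,x)\in[0,T]\times\R^d$ and establish that the SDE \eqref{sde.markov}, run on $[t,T]$ with $\xi(t)=x$, has a unique strong solution with finite $p$-th moments. Since $\mu$ and the herders' trajectory $Y$ are frozen here, the coefficients $b(s,z):=V(s,\mu(s),z)$ and $a(s,z):=\s_*^{(i)}(s,\mu(s),z)$ are, by \eqref{K.L.Lipschitz}, \eqref{convoluted.function.lipschitz} and \eqref{idiosyncratic.noise.is.lipschitz}, Lipschitz in $z$ uniformly in $s$ and of at most linear growth; the very Banach fixed-point argument used in Lemma \ref{lemma:finite.well.posedness}, with the Burkholder-Davis-Gundy inequality \eqref{Doob.maximal.inequality} controlling the martingale part, yields existence, uniqueness and the bound $\mathbb{E}[\sup_{s\in[t,T]}|\xi(s)|^p]\lesssim 1+|x|^p$. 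This makes $u(t,x):=\mathbb{E}^{(x,t)}[\ph(\xi_T)]$ well defined, and taking $t=T$ gives at once the terminal condition $u(T,x)=\ph(x)$, i.e. \eqref{Parabolic.initial.datum}.

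The structural core is the martingale property of $u$ along the diffusion. By uniqueness of strong solutions the flow enjoys the Markov property $\xi_T^{t,x}=\xi_T^{s,\xi_s^{t,x}}$ for $t\le s\le T$, so by the tower rule $\mathbb{E}[\ph(\xi_T^{t,x})\mid\mF_s]=u(s,\xi_s^{t,x})=:M_s$, whence $(M_s)_{s\in[t,T]}$ is a martingale. Granting the regularity $u\in C([0,T]\times\R^d)\cap C^{1,2}([0,T)\times\R^d)$, I then apply the It\^o formula \eqref{Ito.formula} to $u(s,\xi_s)$, obtaining
\begin{equation*}
dM_s=\Big(\pa_s u+V\cdot\grad u+\tfrac12\text{tr}(\s_*H(u))\Big)(s,\xi_s)\,ds+\grad u(s,\xi_s)\cdot\s_*^{(i)}\,dW(s),
\end{equation*}
where $\s_*$ is as in \eqref{sigma.star.def} with $\s_*^{(c)}\equiv0$. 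Because $u$ is Lipschitz its gradient is bounded, so the stochastic integral is a genuine martingale; since $M$ is also a martingale, its continuous finite-variation part must vanish identically, and letting $s\downarrow t$ (so $\xi_s\to x$) together with continuity of the integrand and the arbitrariness of $(t,x)$ forces the drift to be zero at every point. This is exactly \eqref{Parabolic.problem}, so $u$ solves the stated problem and is given by the representation \eqref{feynman.kac}.

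The main obstacle is justifying the regularity $u\in C^{1,2}$ used above, since Assumptions \ref{assumptions} grant only Lipschitz coefficients. I would argue in two stages. The Lipschitz dependence of the solution on its initial datum, obtained by a Gr\"onwall estimate on $\mathbb{E}[|\xi_s^{t,x_2}-\xi_s^{t,x_1}|^2]$ exactly as in the contraction step of Lemma \ref{lemma:finite.well.posedness}, transfers through $\ph\in C_b^2(\R^d)$ to show that $u$ is Lipschitz and continuous up to the terminal time. To upgrade this to $C^{1,2}$ one differentiates the flow $x\mapsto\xi_s^{t,x}$ in its initial condition, its first two derivatives solving the formally linearized SDEs, and differentiates under the expectation in \eqref{feynman.kac}; this is legitimate once the coefficients are slightly more regular than Lipschitz, or once the nondegeneracy of the idiosyncratic noise $\s_*^{(i)}$ supplies parabolic smoothing.

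If one prefers to remain strictly within Assumptions \ref{assumptions}, the lemma should be read in the verification sense: any $C^{1,2}$ solution of \eqref{Parabolic.problem}-\eqref{Parabolic.initial.datum} with polynomial growth necessarily coincides with \eqref{feynman.kac}, by the It\^o computation above. This is precisely the form in which the formula is subsequently exploited in the duality argument for uniqueness of the Fokker-Planck equation, so the existence of a classical solution may alternatively be invoked from standard parabolic Feynman-Kac theory and the content of the proof reduces to the martingale identity.
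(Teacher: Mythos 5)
Your proposal and the paper's proof differ sharply in length but not really in substance: the paper disposes of the lemma in three lines by invoking Theorem 2.7 of \cite{Pavliotis}, checking only that $V$ and $\s_*^{(i)}$ are Lipschitz in $(x,\mu)$ uniformly in $t$ and continuous in time, so that \eqref{sde.markov} is well-posed and $\xi$ is a diffusion. What you have written is essentially the proof of that cited theorem: well-posedness of the frozen-coefficient SDE by the same fixed-point scheme as Lemma \ref{lemma:finite.well.posedness}, the Markov/tower-property argument showing $M_s=u(s,\xi_s^{t,x})$ is a martingale, and the It\^o computation identifying the vanishing finite-variation part with the backward Kolmogorov operator applied to $u$. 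All of these steps are sound, and the terminal condition and the representation \eqref{feynman.kac} come out exactly as claimed.

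The one place where you go beyond the paper is also the most valuable part of your write-up: you correctly isolate that the whole argument hinges on $u\in C^{1,2}([0,T)\times\R^d)$, which does \emph{not} follow from Lipschitz coefficients alone --- one needs either coefficients with two bounded derivatives in $x$ (to differentiate the flow twice under the expectation) or uniform ellipticity of $\s_*^{(i)}$ (to invoke parabolic regularity). The paper's citation silently inherits whatever smoothness or nondegeneracy hypotheses Pavliotis imposes, and Assumptions \ref{assumptions} as stated only give Lipschitz continuity, so this is a gap in the paper rather than in your argument. Your fallback reading of the lemma ``in the verification sense'' is exactly how the result is used downstream: Corollary \ref{cor:uniqueness.fokker.planck.zero.common.noise} only needs the existence of \emph{some} $C^{1,2}$ solution with the representation \eqref{feynman.kac} to run the duality argument, so your observation that the probabilistic content reduces to the martingale identity is apt. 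In short: your route is correct modulo the regularity caveat you yourself flag, and it makes explicit a hypothesis that the paper's one-line citation leaves implicit.
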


\begin{proof} It is a special case of Theorem 2.7, \cite{Pavliotis}, which follows from Assumptions \ref{assumptions}. Indeed, we notice that by Assumptions \ref{assumptions}, $V$ is Lipschitz-continuous in $x,\mu$ uniformly in $t$, and it is continuous in time, and the same holds for $\s_*^{(i)}$. From this, it follows that the solution $\xi$ to \eqref{sde.markov} exists and is unique: this is enough to guarantee that $\xi$ is a diffusion process, and so Feynman-Kac formula \eqref{feynman.kac} is available.
\end{proof}

Thanks to Feynman-Kac, we easily get the uniqueness of measure-valued solutions for \eqref{Parabolic.problem}-\eqref{Parabolic.initial.datum}:

\begin{corollary}\label{cor:uniqueness.fokker.planck.zero.common.noise} In notations and assumptions of Theorem \ref{thm:Fokker.Planck.via.Ito}, for $\s_*^{(c)}\equiv0$ we have that the law $\mu$ of the mean-field process is the unique measure-valued solution of \eqref{Parabolic.problem}-\eqref{Parabolic.initial.datum}.
\end{corollary}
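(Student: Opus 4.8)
The plan is to establish uniqueness by the classical duality argument afforded by the solvability of the backward Kolmogorov problem in Lemma \ref{lemma:Feynman.Kac}. Since $\s_*^{(c)}\equiv0$, by Remark \ref{remark:no.common.noise.gives.deterministic.law} the mean-field law $\mu$ and the herder trajectory $Y$ are deterministic and uniquely determined by Lemma \ref{thm:mean.field.well.posedness}; consequently the drift $V(t,\mu(t),\cdot)$ and the diffusion matrix $\s_*(t,\mu(t),\cdot)$ from \eqref{V.def}-\eqref{sigma.star.def} are fixed, deterministic coefficients which are Lipschitz in space with at most linear growth. This is precisely the feature that lets us avoid the McKean-Vlasov nonlinearity: the forward problem \eqref{Deterministic.Fokker.Planck.PDE}-\eqref{Deterministic.Fokker.Planck.Initial.datum} becomes a \emph{linear} Fokker-Planck equation, and since Theorem \ref{thm:Fokker.Planck.via.Ito} already exhibits $\mu$ as one measure-valued solution, it suffices to show that any measure-valued solution $\tilde\mu\in C([0,T];\mW_p(\R^d))$ with $\tilde\mu(0)=\text{Law}(X^{(0)})$ must equal $\mu$.

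First I would fix a horizon $t_0\in(0,T]$ and a test function $\ph\in C_c^\infty(\R^d)$ and solve the backward Kolmogorov problem \eqref{Parabolic.problem}-\eqref{Parabolic.initial.datum} on $[0,t_0]$ with terminal datum $\ph$ at $t_0$; by Lemma \ref{lemma:Feynman.Kac} this yields $u\in C([0,t_0]\times\R^d)\cap C^{1,2}([0,t_0)\times\R^d)$ with $\pa_t u + \mL u=0$, where $\mL=\frac12\text{tr}(\s_* H)+V\cdot\grad$, and with the Feynman-Kac representation \eqref{feynman.kac}. Next I would insert the \emph{time-dependent} function $u(s,\cdot)$ into the weak formulation \eqref{duality.fokker.planck.common} (taken with $W^{(c)}\equiv0$): the resulting product-rule identity reads
\begin{equation*}\la\tilde\mu(t_0),\ph\ra-\la\tilde\mu(0),u(0,\cdot)\ra=\int_0^{t_0}\la\tilde\mu(s),(\pa_s u+\mL u)(s,\cdot)\ra\,ds=0,\end{equation*}
because $u$ solves the backward equation. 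Running the identical computation for $\mu$ and subtracting, and using that both solutions share the initial datum $\text{Law}(X^{(0)})$ and the same coefficients (hence the same $u$), yields $\la\tilde\mu(t_0),\ph\ra=\la\mu(t_0),\ph\ra$. As $\ph$ ranges over the separating class $C_c^\infty(\R^d)$ and $t_0\in(0,T]$ is arbitrary, this forces $\tilde\mu\equiv\mu$.

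The two steps I expect to require genuine care are both about legitimizing the product-rule identity above. First, \eqref{duality.fokker.planck.common} is stated for compactly supported test functions, whereas the Feynman-Kac solution $u(s,\cdot)$ is only bounded together with its first two spatial derivatives; extending the weak formulation to such $u$ calls for a truncation argument in which the cutoff errors are absorbed using the uniform moment bound \eqref{mean.field.solution.boundedness} for $\mu,\tilde\mu$ together with the linear growth of $V$ and $\s_*$ inherited from \eqref{K.L.Lipschitz} and \eqref{idiosyncratic.noise.is.lipschitz}. Second, $u$ is only $C^{1,2}$ on the open slab $[0,t_0)\times\R^d$ and merely continuous up to the terminal time, so the computation must first be carried out on $[0,t_0-\e]$ and then passed to the limit $\e\to0^+$, invoking the continuity of $u$ up to $t_0$ and dominated convergence (again via \eqref{mean.field.solution.boundedness}) to recover the boundary term $\la\tilde\mu(t_0),\ph\ra$. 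Once these extensions are secured the duality identity closes and uniqueness follows.
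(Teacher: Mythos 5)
Your proposal is correct and follows essentially the same route as the paper: duality against the backward Kolmogorov problem solved via the Feynman--Kac formula of Lemma \ref{lemma:Feynman.Kac}, yielding the conservation identity $\la\tilde\mu(t_0),\ph\ra=\la\tilde\mu(0),u(0,\cdot)\ra$ and hence uniqueness. You are in fact more explicit than the paper on two points it leaves implicit --- extending the weak formulation from compactly supported test functions to the bounded Feynman--Kac solution, and handling the loss of $C^{1,2}$ regularity at the terminal time by working on $[0,t_0-\e]$ and letting $\e\to0^+$ --- and on the fact that the argument establishes uniqueness for the equation with coefficients frozen along the fixed flow $\mu$, which is exactly what the statement requires.
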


\begin{proof}

Let us observe that one can repeat the computations done in the proof of Theorem \ref{thm:Fokker.Planck.via.Ito} to show that for all $u\in C^{1,2}([0,T)\times\R^d)$, one has that for all $0\le t'\le t<T$,
\begin{align}\la\mu(t),u(t,\cdot)\ra - \la\mu(t'),u(t',\cdot)\ra &= \int_{t'}^t\la\mu(s),\pa_s u(s,\cdot)\ra\,ds + \int_{t'}^t\la\mu(s),V(s,\mu(s),\cdot)\cdot\grad u(s,\cdot)\ra\,ds \notag
\\&+ \frac12\int_{t'}^t\la\mu(s),\text{tr}\{H(u)\s_*(s,\mu^{(i)}(s),\cdot)   \ra\,ds.\label{Fokker.Planck.time.extended}
\end{align}

Let us fix any $\ph\in C_b^2(\R^d)$, and let us consider the solution $u(t,x)$ for the Kolmogorov problem \eqref{Parabolic.problem}-\eqref{Parabolic.initial.datum} with final datum $\ph$ given by \eqref{feynman.kac}. Then, by \eqref{Fokker.Planck.time.extended} one gets that for all $0\le t'\le t<T$, 
\begin{equation*}\la\mu(t),u(t,\cdot)\ra=\la\mu(t'),u(t',\cdot)\ra,\end{equation*}
and as a result, as $t\to T$ and $t'\to0$, by continuity of $\mu$ we get 
\begin{equation*}\la\mu_{0},u(0,\cdot)\ra=\la\mu(T),\ph\ra:\end{equation*}
by arbitrariness of $T$, we have the uniqueness of $\mu$ as well.

\end{proof}

\section{Optimal control for discrete and mean-field problems}

In this section, we will make the following assumptions.

\begin{ass}\label{assumptions.control}

$(i)$ (Stochastic control functions) Fixed an integer $\ell\in\N$ and two positive real numbers $M',L>0$, let us call $\mU\subset\R^{d\times\ell}$ and $\mathcal{G}\subset C((\R^d)^M\times\mW_1(\R^d);\R^{\ell})$ two sets such that:
\begin{align}&\mU,\mG\,\,\text{are compact}, \label{U.G.compatti}
\\&\mG\,\,\text{is made of $M'$-bounded, $L-$Lipschitz-continuous functions}. \label{G.description}
\end{align}
Let us call $E:= L^1([0,T];\mU)\times\mG$, and for any $(h,g)\in\mM(\Om;E^M)$ such that $h$ is a $\mF_t-$adapted process, let us suppose that for any $m\in\N$ such that $1\le m\le M$, and for any $t\in[0,T],\,Y\in C([0,T];(\R^d)^M),\,\nu\in C([0,T];\mW_1(\R^d))$, we have that each $u_m$ satisfies \eqref{separation of variable}.

\medskip

$(ii)$ (Cost functions) In the notations above, let us consider the running cost function $\Psi_\rho\colon\,\R^{d\times\ell}\times\R^{\ell}\longmapsto\R$, the transient cost function $\Psi_\tau\colon\,[0,T]\times(\R^d)^M\times\mW_1(\R^d)\longmapsto\R$ and the endpoint cost function $\Psi_\e\colon\,(\R^d)^M\times\mW_1(\R^d)\longmapsto\R$ such that
\begin{align}
&\Psi_\rho\,\,\text{is continuous in all the variables}\,\, \text{and convex in the first}, \label{Psi.in.assumptions}
\\&\Psi_\t\,\,\text{is uniformly continuous,}  \label{L.assumptions}
\\&\Psi_\e\,\,\text{is uniformly continuous}  \label{e.assumptions}
\end{align}
\end{ass}

\begin{remark}
We point out that the Assumptions \ref{assumptions.control}, point $(i)$, are compatible with those made in Assumptions \ref{assumptions}, point $(iii)$, for the control functions. For $h,g$ there is no need to make any further summability assumptions over $\Om$, since by \eqref{U.G.compatti} and \eqref{G.description} they are both bounded functions, and so $u$ is automatically in $L^p(\Om)$.

Moreover, if $\s_*^{(c)}\equiv0$, then we can assume the latter to be deterministic, and not stochastic, as in \cite{ACS}.

\end{remark}

\begin{remark} We point out some technical assumptions for the cost functions.

The assumption \eqref{Psi.in.assumptions} is expected to be the tightest one for having $\Gamma-$convergence, since the convexity in time variable guarantees the lower semicontinuity of the integral functional $\int_0^T\Psi_\rho\,dt$ with respect to the weak $L^1-$convergence.

The assumptions \eqref{L.assumptions} and \eqref{e.assumptions} are made \emph{ad hoc} for the functional \eqref{F.N.finite} for the discrete problem \eqref{SDE.finite}-\eqref{Initial.data.finite}, since the uniform continuity guarantees the existence of a concave modulus of continuity, so that when the expectation operator $\mathbb{E}$ is applied to the integral functional $\int_0^T\Psi_\t\,dt$ and to $\Psi_\e$, by Jensen Inequality we can get very good estimates which eventually converge to $0$ (see proof of Lemmas \ref{lemma:F.N.has.solution}, \ref{lemma:F.N.has.solution} and Theorem \ref{thm:Gamma.convergence}).

\end{remark}

\begin{remark}

In the assumption \eqref{G.description}, we have required that each $g_m$ in \eqref{separation of variable} has the same $L^\infty-$bound $M$ and the same Lipschitz constant $L$. This is fine from a mathematical point of view; from that of applications, it is something of interest to distinguish the bounds on each control.

\end{remark}

Let us start as usual by the discrete problem. 
Assuming that the initial datum $(X_0^{(N)},Y_0^{(N)})\in (L^p(\Om;\R^d))^N\times(\R^d)^M$, 
by Assumptions \ref{assumptions} and by Lemma \ref{lemma:finite.well.posedness}, we have the existence and uniqueness of a strong solution $(X^{(N)},Y^{(N)})\in (C([0,T];L^p(\Om;\R^d)))^{N+M}$, and moreover, the empirical measure $\mu_N(t)=\frac{1}{N}\sum_{n=1}^N\delta_{X_n^{(N)}(t)}\in L^p(\Om;\mW_p(\R^d))$ for all $t\in[0,T]$.

Thanks to this and to the Assumptions \ref{assumptions.control}, we get that the functional $\mF_N\colon\,E^M\longmapsto\R$ defined as
\begin{equation}\mF_N(h,g):=\mathbb{E}\Big[\int_0^T\Psi_\rho(h(t),g(Y^{(N)}(t),\mu_N(t)))\,dt + \int_0^T\Psi_\t(t,Y^{(N)}(t),\mu_N(t))\,dt + \Psi_\e(Y^{(N)}(T),\mu_N(T))\Big]
\end{equation}
is well-posed. We are interested in showing that there exists a solution to the optimal control problem
\begin{equation}\label{discrete.optimal.control.problem}\exists(h_{*,N},g_{*,N})\in \mM(\Om;E^M)\,\,\text{such that}\,\,\mF_N(h_{*,N},g_{*,N})=\min_{(h,g)\in\mM(\Om;E^M)}\mF_N(h,g).
\end{equation}
To this term, let us start by showing the stability of the constraints \eqref{SDE.finite}-\eqref{Initial.data.finite} with respect to variation of controls:
\begin{lemma}\label{lemma:stability.of.controls.finite}
In the Assumptions \ref{assumptions}, \ref{assumptions.control}, fixed any $p\ge2$ let us suppose that there exists a sequence $(h^{(j)},g^{(j)})_{j\in\N}\subset\mM(\Om;E^M)$ such that $\mathbb{P}-$almost surely,
\begin{align}&h^{(j)}\rightharpoonup h\quad\text{weakly in}\,\,L^1[0,T] \label{h.approximation.assumptions.finite},
\\&g^{(j)}\to g\quad\text{pointwise on}\,\,(\R^d)^M\times\mW_p(\R^d). \label{g.approximation.assumptions.finite}
\end{align}
Let $(X^{(N,j)},Y^{(N,j)})$ be the strong solution of \eqref{SDE.finite}-\eqref{Initial.data.finite} with initial datum \begin{equation*}(X_0^{(N)},Y_0^{(N)})\in (L^p(\Om;\R^d))^N\times(\R^d)^M\end{equation*}
and control function $u^{(j)}$ such that for all $m$ with $1\le m\le M$, $u_m^{(j)}=h_m^{(j)}\cdot g_m^{(j)}$. Furthermore, let $(X^{(N)},Y^{(N)})$ be the strong solution of \eqref{SDE.finite}-\eqref{Initial.data.finite} with initial datum 
\begin{equation*}(X_0^{(N)},Y_0^{(N)})\in(L^p(\Om;\R^d))^N\times(\R^d)^M\end{equation*}
and control function $u$ such that for all $m$ with $1\le m\le M$, $u_m=h_m\cdot g_m$.

\medskip

Then, calling $\mu_N^{(j)}$ the empirical measure associated to $(X^{(N,j)},Y^{(N,j)})$ and $\mu_N$ that of $(X^{(N)},Y^{(N)})$, it holds that
\begin{equation}\label{stability.of.real.solution.finite}\lim_{j\to\infty}\mathbb{E}\Big[\sup_{t\in[0,T]}\mW_1(\mu_N(t),\mu_N^{(j)}(t)) + \sup_{t\in[0,T]}|Y^{(N)}(t)-Y^{(N,j)}(t)|\Big]=0.
\end{equation}
\end{lemma}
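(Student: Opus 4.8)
The plan is to adapt the Gr\"onwall/Burkholder-Davis-Gundy machinery from the proof of Lemma \ref{lemma:finite.well.posedness}, applied to the difference of the two solutions, isolating the contribution coming purely from the difference of the controls. I would work with $p$-th moments throughout and recover the first-moment statement \eqref{stability.of.real.solution.finite} at the very end by Jensen's inequality together with \eqref{Wasserstein.empirical.measure}, which gives $\sup_{t}\mW_1(\mu_N(t),\mu_N^{(j)}(t))\le\max_{n}\sup_{t}|X_n^{(N)}(t)-X_n^{(N,j)}(t)|$.

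First I would subtract the integral equations \eqref{strong.solution.X.discrete.common}-\eqref{strong.solutions.Y.discrete.common} written for $(X^{(N)},Y^{(N)})$ and for $(X^{(N,j)},Y^{(N,j)})$. Since the initial data coincide and the noise coefficients $\s_*^{(i)},\s_*^{(c)}$ do \emph{not} depend on the control, every term except the control term is handled verbatim as in Lemma \ref{lemma:finite.well.posedness}: by \eqref{K.L.Lipschitz}, \eqref{convoluted.function.lipschitz}, \eqref{convoluted.operator.lipschitz.pointwise}, \eqref{Wasserstein.empirical.measure} and the Burkholder-Davis-Gundy inequality \eqref{Doob.maximal.inequality}, all of these produce Gr\"onwall-admissible contributions $\int_0^t(\cdots)\,ds$ controlled by the quantity $G_N^{(j)}(s):=\mathbb{E}[\max_n\sup_{z\le s}|X_n^{(N)}(z)-X_n^{(N,j)}(z)|^p+\max_m\sup_{z\le s}|Y_m^{(N)}(z)-Y_m^{(N,j)}(z)|^p]$, where the empirical-measure differences are reabsorbed through $\mW_1(\mu_N,\mu_N^{(j)})\le\max_n|X_n^{(N)}-X_n^{(N,j)}|$.

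The crux is the control difference in the $Y$-equation, for which I would use the splitting
\begin{align*}
u_m-u_m^{(j)}&=(h_m-h_m^{(j)})\,g_m(Y^{(N)},\mu_N)+h_m^{(j)}\big(g_m-g_m^{(j)}\big)(Y^{(N)},\mu_N)\\
&\quad+h_m^{(j)}\big[g_m^{(j)}(Y^{(N)},\mu_N)-g_m^{(j)}(Y^{(N,j)},\mu_N^{(j)})\big].
\end{align*}
The last term is $L$-Lipschitz in the states uniformly in $j$ by \eqref{G.description}, and $h^{(j)}$ is bounded because $\mU$ is compact by \eqref{U.G.compatti}; hence it contributes yet another Gr\"onwall term via \eqref{Wasserstein.empirical.measure}. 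The first two terms, raised to the $p$-th power, I would collect into a $j$-dependent error $\e_j\to0$. For the first term the key observation is that compactness of $\mU$ forces a uniform $L^\infty$-bound on the $h^{(j)}$, so the primitives $t\mapsto\int_0^t(h_m-h_m^{(j)})\,g_m(Y^{(N)},\mu_N)\,ds$ are equi-Lipschitz; the weak $L^1$-convergence \eqref{h.approximation.assumptions.finite}, tested against the fixed bounded function $\mathbf{1}_{[0,t]}g_m(Y^{(N)}(\cdot),\mu_N(\cdot))$, gives pointwise-in-$t$ convergence to $0$, which by equicontinuity (Arz\`ela-Ascoli) upgrades to convergence uniform in $t$, $\mathbb{P}$-almost surely. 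For the second term, the pointwise convergence \eqref{g.approximation.assumptions.finite} together with the uniform $M'$-bound and dominated convergence gives $\sup_t|\int_0^t h_m^{(j)}(g_m-g_m^{(j)})\,ds|\lesssim\int_0^T|g_m-g_m^{(j)}|(Y^{(N)},\mu_N)\,ds\to0$ $\mathbb{P}$-almost surely. Since all quantities are uniformly bounded (the $g$'s by $M'$, the $h$'s by the diameter of $\mU$), dominated convergence over $\Om$ yields $\e_j\to0$.

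Collecting everything gives $G_N^{(j)}(t)\lesssim\e_j+\int_0^t G_N^{(j)}(s)\,ds$, so Gr\"onwall's inequality \eqref{Gronwall.inequality} yields $G_N^{(j)}(T)\lesssim\e_j\to0$, and \eqref{stability.of.real.solution.finite} follows by Jensen and \eqref{Wasserstein.empirical.measure}. I expect the main obstacle to be precisely the first control term: weak $L^1$-convergence by itself does not control a supremum over time, and the point is to exploit the compactness of $\mU$ (uniform boundedness of the $h^{(j)}$) to gain the equicontinuity needed to pass from weak to uniform-in-time convergence, and only afterwards to transfer this to $L^1(\Om)$ by dominated convergence.
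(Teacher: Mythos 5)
Your proposal follows essentially the same route as the paper's proof: the same three-way splitting of the control difference $u_m-u_m^{(j)}$ (weak-convergence term in $h$, pointwise-convergence term in $g$, and a Lipschitz term reabsorbed into Gr\"onwall), the same use of the Burkholder-Davis-Gundy inequality for the noise differences, and the same conclusion via Gr\"onwall plus dominated convergence over $\Om$. If anything, your Arz\`ela-Ascoli argument for upgrading the pointwise-in-$t$ convergence of $\int_0^t(h_m-h_m^{(j)})g_m(Y^{(N)},\mu_N)\,ds$ to a uniform-in-$t$ one makes explicit a step that the paper's definition of $R_m^{(j)}$ (which involves a supremum over $s\in[0,t]$) leaves implicit.
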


\begin{proof}
By definitions of strong solution, arguing as done in Lemma \ref{lemma:finite.well.posedness} one has that on one hand
\begin{align}|X_n^{(N)}(t)-X_n^{(N,j)}(t)|^p
&\lesssim_{p,L}\int_0^t\Big\{\sup_{z\in[0,s]}|X^{(N)}(z) - X^{(N,j)}|^p + \sup_{z\in[0,s]}|Y^{(N)}(z)-Y^{(N,j)}(z)|^p\Big\}\,ds \notag
\\&+\sup_{s\in[0,t]}\Big|\int_0^s[\s_*^{(i)}(z,Y^{(N)},X_n^{(N)},\mu_N) - \s_*^{(i)}(z,Y^{(N,j)},X_n^{(N,j)},\mu_N^{(j)})]\,dW_n^{(i)}(z) \Big|^p \notag
\\&+\sup_{s\in[0,t]}\Big|\int_0^s[\s_*^{(c)}(z,Y^{(N)},X_n^{(N)},\mu_N) - \s_*^{(c)}(z,Y^{(N,j)},X_n^{(N,j)},\mu_N^{(j)})]\,dW^{(c)}(z) \Big|^p 
\label{1}
\end{align}
and on the other hand
\begin{align}|Y_m^{(N)}(t) - Y_m^{(N,j)}(t)|^p
&\lesssim_{p,L}\int_0^t\Big\{\sup_{z\in[0,s]}|X^{(N)}(z) - X^{(N,j)}(z)|^p + \sup_{z\in[0,s]}|Y^{(N)}(z)-Y^{(N,j)}(z)|^p\Big\}\,ds \notag
\\&+\Big|\underbrace{\int_0^t h_mg_m(Y^{(N)},\mu_N) - h_m^{(j)}g_m^{(j)}(Y^{(N,j)},\mu_N^{(j)})\,ds}_{=:(I)}\Big|^p.  \label{2}
\end{align}
In \eqref{2}, let us estimate $(I)$ by using \eqref{h.approximation.assumptions.finite}:
\begin{align*}(I)
&\le\Big|\int_0^t[h_m-h_m^{(j)}]\cdot g_m(Y^{(N)},\mu_N)\,ds\Big|
\\&+ \int_0^t|h_m^{(j)}|\cdot|g_m(Y^{(N)},\mu_N)-g_m^{(j)}(Y^{(N)},\mu_N)|\,ds
\\&+ \underbrace{\int_0^t|h_m^{(j)}|\cdot|g_m^{(j)}(Y^{(N)},\mu_N)-g_m^{(j)}(Y^{(N,j)},\mu_N^{(j)})|\,ds}_{=:(T)}
\\&\lesssim_{M'}\Big|\int_0^t[h_m-h_m^{(j)}]\cdot g_m(Y^{(N)},\mu_N)\,ds\Big|+\|g_m(Y^{(N)},\mu_N)-g_m^{(j)}(Y^{(N)},\mu_N)\|_{L^1[0,t]} + (T)
\\&\lesssim_{M'}\sum_{m=1}^M\sup_{s\in[0,t]}\Big[\Big|\int_0^s[h_m-h_m^{(j)}]\cdot g_m(Y^{(N)},\mu_N)\,dz\Big|+ \|g_m(Y^{(N)},\mu_N)-g_m^{(j)}(Y^{(N)},\mu_N)\|_{L^1[0,s]}\Big]
\\&+ (T)
\\&=:\sum_{m=1}^M R_m^{(j)}(t) + (T).
\end{align*}
In the first inequality, we observe that for all $t\in[0,T]$ and for all $1\le m\le M$, the first integral converges to $0$ as $j\to\infty$ because of \eqref{G.description} and \eqref{h.approximation.assumptions.finite}; the second integral because of \eqref{U.G.compatti}, \eqref{g.approximation.assumptions.finite} and \eqref{G.description} which allows to apply Lebesgue Dominated Convergence Theorem. Then, $R_m^{(j)}\to0$ as $j\to\infty$, for all $m$ with $1\le m\le M$ and for any $\om\in\Om$. As for $(T)$, it can be controlled by \eqref{h.approximation.assumptions.finite}, \eqref{G.description} and \eqref{Wasserstein.empirical.measure} as follows:
\begin{align}(T)
&\lesssim_{M',L}\int_0^t[\mW_1(\mu_N(s),\mu_N^{(j)}(s)) + |Y^{(N)} - Y^{(N,j)}|]\,ds \notag
\\&\lesssim_{M',L}\int_0^t[|X^{(N)}(s)-X^{(N,j)}(s)| + |Y^{(N)} - Y^{(N,j)}|]\,ds \label{T.estimate}
\end{align}
Coming back to \eqref{1} and \eqref{2}, if we call \begin{align*}&G(t):=\mathbb{E}\Big[\sup_{s\in[0,t]}|X^{(N)}(s)-X^{(N,j)}(s)|^p + \sup_{s\in[0,t]}|Y^{(N)}(s) - Y^{(N,j)}(s)|^p\Big],
\\&R^{(j)}:=\mathbb{E}\Big[\Big|\sum_{m=1}^M R_m^{(j)}(T)\Big|^p\Big],
\end{align*}
as usual we get
\begin{align*}G(t)\lesssim_{p,L,T,M'}\int_0^t G(s)\,ds + R^{(j)}.
\end{align*}
By \eqref{U.G.compatti} and \eqref{G.description}, we find out that $R_m^{(j)}(T)$ is uniformly bounded in $\Om$ with respect to $j$, so that by Lebesgue Dominated Convergence Theorem, $R^{(j)}\to0$ as $j\to\infty$: we conclude by Proposition \ref{prop:Gronwall.inequality} and by \eqref{Wasserstein.empirical.measure}.

\end{proof}

\begin{remark}\label{remark:extension.stability.lemma} One can extend Lemma \ref{lemma:stability.of.controls.finite} even when $j=N$, in other words, when also the number of herd individuals increases. Indeed, in the first inequality when estimating $(I)$, by uniform boundedness of $g^{(N)}$ one gets that the first integral converges to $0$; as for the second integral, one observes that $h_m^{(N)}$ is uniformly bounded and taken $Y$ the solution to the mean-field system \eqref{SDE.mean.field}-\eqref{Initial.data.mean.field} with control $u$, we also have
\begin{align*}|g_m(Y^{(N)},\mu_N) - g_m^{(N)}(Y^{(N)},\mu_N)| 
&\le\underbrace{|g_m(Y^{(N)},\mu_N) - g_m(Y,\mu)|}_{=:(I)}
\\&+\underbrace{|g_m(Y,\mu) - g_m^{(N)}(Y,\mu)|}_{=:(II)}
\\&+\underbrace{|g_m^{(N)}(Y,\mu) - g_m^{(N)}(Y^{(N)},\mu_N)|}_{=:(III)};
\end{align*}
the terms $(I),(III)$ tends to $0$ because by \eqref{G.description}, each $g_m$ is $L-$Lipschitz continuous and Theorem \ref{thm:propagation.of.chaos} holds, while the term $(II)$ converges to $0$ by \eqref{g.approximation.assumptions.finite}. All the integral converges then to $0$ by Lebesgue Dominated Convergence Theorem. This fact will be useful for the proof of Theorem \ref{thm:Gamma.convergence}.
    
\end{remark}

Thanks to this lemma, we can get the existence of minima for \eqref{F.N.finite}:
\begin{lemma}\label{lemma:F.N.has.solution}
In the Assumptions \ref{assumptions} and \ref{assumptions.control}, fixed any $p\ge2$ and supposing that $(X_0^{(N)},Y_0^{(N)})\in (L^p(\Om;\R^d))^N\times(\R^d)^M$ is the initial datum for \eqref{SDE.finite}-\eqref{Initial.data.finite}, we have that the problem \eqref{discrete.optimal.control.problem} has a solution $(h_{*,N},g_{*,N})\in\mM(\Om;E^M)$.
\end{lemma}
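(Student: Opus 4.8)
The plan is to run the direct method of the calculus of variations. First I would fix a minimizing sequence $(h^{(j)},g^{(j)})_{j}\subset\mM(\Om;E^M)$ with $\mF_N(h^{(j)},g^{(j)})\to\inf\mF_N$ and extract a candidate minimizer from it using the compactness built into Assumptions \ref{assumptions.control}. Since $\mU$ is compact by \eqref{U.G.compatti}, each $h^{(j)}$ is uniformly bounded, hence the family is uniformly integrable, and by the Dunford--Pettis theorem it is relatively weakly sequentially compact in $L^1([0,T])$; up to a subsequence $h^{(j)}\rightharpoonup h$ weakly in $L^1$. Since $\mG$ is compact by \eqref{U.G.compatti}, up to a further subsequence $g^{(j)}\to g$ in the sup--norm of $C((\R^d)^M\times\mW_1(\R^d);\R^\ell)$, in particular pointwise as required in \eqref{g.approximation.assumptions.finite}. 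The point needing care is that these extractions must be performed $\mathbb{P}$--almost surely and compatibly with the adaptedness of the controls; this can be arranged by a diagonal/measurable--selection argument exploiting separability of the compact control spaces, and both weak--$L^1$ limits and a.s. limits preserve $\mF_t$--adaptedness, so the limit $(h,g)\in\mM(\Om;E^M)$ is admissible.

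With the convergences \eqref{h.approximation.assumptions.finite}--\eqref{g.approximation.assumptions.finite} now in force $\mathbb{P}$--a.s., Lemma \ref{lemma:stability.of.controls.finite} applies and gives $\mathbb{E}\big[\sup_{t}\mW_1(\mu_N(t),\mu_N^{(j)}(t))+\sup_{t}|Y^{(N)}(t)-Y^{(N,j)}(t)|\big]\to0$, where $(X^{(N)},Y^{(N)})$ and $\mu_N$ are the state and empirical measure driven by $(h,g)$. For the transient and endpoint terms I would pass to the limit directly: by \eqref{L.assumptions} and \eqref{e.assumptions}, $\Psi_\t$ and $\Psi_\e$ are uniformly continuous and hence admit concave moduli of continuity $\omega_\t,\omega_\e$, so that $\mathbb{E}\int_0^T|\Psi_\t(t,Y^{(N,j)},\mu_N^{(j)})-\Psi_\t(t,Y^{(N)},\mu_N)|\,dt\le T\,\omega_\t\!\big(\mathbb{E}[\sup_t(|Y^{(N)}-Y^{(N,j)}|+\mW_1(\mu_N,\mu_N^{(j)}))]\big)$ by Jensen's inequality, and similarly for $\Psi_\e$; both right--hand sides vanish as $j\to\infty$ by the stability estimate. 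Thus these two contributions converge to their values at $(h,g)$.

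The heart of the argument, and the step I expect to be the main obstacle, is the lower semicontinuity of the running cost, where $h^{(j)}$ converges only weakly while its companion argument converges strongly. Writing $w^{(j)}(t):=g^{(j)}(Y^{(N,j)}(t),\mu_N^{(j)}(t))$ and $w(t):=g(Y^{(N)}(t),\mu_N(t))$, the decomposition used in Remark \ref{remark:extension.stability.lemma}, together with the uniform $L$--Lipschitz bound \eqref{G.description}, the uniform convergence $g^{(j)}\to g$, and the stability estimate, yields $w^{(j)}\to w$ strongly in $L^1([0,T])$, $\mathbb{P}$--a.s. Since $\Psi_\rho$ is jointly continuous and convex in its first variable by \eqref{Psi.in.assumptions}, the classical weak--strong lower semicontinuity theorem for integral functionals with convex first argument (of Ioffe type) gives $\int_0^T\Psi_\rho(h(t),w(t))\,dt\le\liminf_j\int_0^T\Psi_\rho(h^{(j)}(t),w^{(j)}(t))\,dt$ $\mathbb{P}$--a.s. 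Because all arguments $(h^{(j)}(t),w^{(j)}(t))$ lie in the compact set $\mU\times\overline{B}_{M'}(0)$ (using the $M'$--bound of \eqref{G.description}), the integrands are uniformly bounded, so Fatou's lemma in $\Om$ transfers the inequality to the expectations. Combining this with the convergence of the transient and endpoint terms gives $\mF_N(h,g)\le\liminf_j\mF_N(h^{(j)},g^{(j)})=\inf\mF_N$, so $(h,g)$ is the desired minimizer. The delicate points to verify carefully remain the simultaneous a.s.\ subsequence extraction in the random control space and the precise applicability of the weak--strong lower semicontinuity theorem to $\Psi_\rho$.
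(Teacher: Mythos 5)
Your proposal is correct and follows essentially the same route as the paper: the direct method with a minimizing sequence, weak-$L^1$ extraction of $h^{(j)}$ from compactness of $\mU$, (locally) uniform extraction of $g^{(j)}$ from compactness of $\mG$, the stability Lemma \ref{lemma:stability.of.controls.finite} combined with Jensen's inequality and concave moduli of continuity for the transient and endpoint costs, and convexity of $\Psi_\rho$ in its first argument plus Fatou over $\Om$ for the running cost. The only cosmetic difference is that you invoke the Ioffe-type weak--strong lower semicontinuity theorem for $\int_0^T\Psi_\rho(h^{(j)},w^{(j)})\,dt$ in one step, whereas the paper proves exactly that special case by hand via the three-term decomposition $(I)+(II)+(III)$, isolating the weakly converging variable in a term where only convexity is used; the two are equivalent here since the strong $L^1$ convergence of $w^{(j)}$ you establish is precisely what the paper's terms $(II)$ and $(III)$ control.
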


\begin{proof}
In this proof, we will refer to the notations introduced in the proof of Lemma \ref{lemma:stability.of.controls.finite}.
Let us firstly observe that by \eqref{U.G.compatti}, \eqref{Psi.in.assumptions} and \eqref{L.assumptions}, $\mF_N$ is bounded (from below), so that there exists a minimizing sequence $(h^{(j)},g^{(j)})_{j\in\N}\subset\mM(\Om;E^M)$ for $\mF_N$, namely, 
\begin{equation}\mF_N(h^{(j)},g^{(j)})\to\inf_{\mM(\Om;E^M)}\mF_N
\end{equation}
as $j\to\infty$.
We want to show that $\displaystyle\inf_{\mM(\Om;E^M)}\mF_N$ is realized by some $(h_{*,N},g_{*,N})\in \mM(\Om;E^M)$, which would be then a minimum for $\mF_N$. 

We need to guess it. On one hand, by \eqref{U.G.compatti}, $\mU$ is compact in $\R^{\ell\times d}$ so that we can suppose with no loss of generality that $\mathbb{P}-$almost surely,
\begin{equation}\label{hj.weak.conv.h.finite}h^{(j)}\rightharpoonup h_{*,N}\qquad\text{weakly in}\,\,L^1\end{equation}
with $h_{*,N}\in\mM(\Om;(L^1([0,T];\mU))^M)$.

On the other hand, since by Lemma \ref{lemma:finite.well.posedness} we have that $\mathbb{P}-$almost surely, $\mu_N^{(j)}(t)\in\mW_p(\R^d)$ for all $t\in[0,T]$, then we are allowed to restrict $\mathbb{P}^{(c)}-$almost surely $g^{(j)}$ to $\mW_p(\R^d)$, which is $\s-$compact and dense in $\mW_1(\R^d)$: by Ascoli-Arzelà, with no loss of generality we can suppose that there exists some $g_{*,N}\in\mG^M$ such that 
\begin{align}g^{(j)}\to g_{*,N}\qquad&\text{pointwise in}\,\,(\R^d)^M\times\mW_p(\R^d), \notag
\\&\text{uniformly in compact subsets of}\,\,(\R^d)^M\times\mW_p(\R^d). \label{gj.weak.con.g.finite}
\end{align}
If we then show that
\begin{equation}\label{final.condition.finite}\inf_{\mM(\Om;E^M)}\mF_N=\liminf_{j\to\infty}\mF_N(h^{(j)},g^{(j)})\ge\mF_N(h_{*,N},g_{*,N}), 
\end{equation}
then we are done.
We want to estimate $\mF_N$ from below. Recalling the expression \eqref{F.N.finite}, let us start by the endpoint cost. Since $\Psi_\e$ satisfies \eqref{e.assumptions}, there exists a concave continuity modulus $\om_1$, and so we get the estimate
\begin{align*}|\Psi_\e(Y^{(N)}(T),\mu_N(T))& - \Psi_\e(Y^{(N,j)}(T),\mu_N^{(j)}(T))|
\\&\le\om_1\Big(\sup_{t\in[0,T]}\mW_1(\mu_N(t),\mu_N^{(j)}(t)) + \sup_{t\in[0,T]}|Y^{(N)}(t) - Y^{(N,j)}(t)|\Big),
\end{align*}
from which we get by Jensen Inequality
\begin{align*}\mathbb{E}[|\Psi_\e(Y^{(N)}(T),\mu_N(T))& - \Psi_\e(Y^{(N,j)}(T),\mu_N^{(j)}(T))|]
\\&\le\om_1\Big(\mathbb{E}\Big[\sup_{t\in[0,T]}\mW_1(\mu_N(t),\mu_N^{(j)}(t)) + \sup_{t\in[0,T]}|Y^{(N)}(t) - Y^{(N,j)}(t)|\Big]\Big),
\end{align*}
which tends to $0$ as $j\to\infty$ by \eqref{hj.weak.conv.h.finite}, \eqref{gj.weak.con.g.finite} and \eqref{stability.of.real.solution.finite}.

Now, let us turn our attention to the transient cost. Since $\Psi_\t$ is uniformly continuous, there exists a concave continuity modulus $\om_2$, for which
\begin{align*}\int_0^T&|\Psi_\t(t,Y^{(N)},\mu_N) -\Psi_\t(t,Y^{(N,j)},\mu_N^{(j)})|\,dt
\\&\lesssim_T \om_2\Big(\sup_{t\in[0,T]}\mW_1(\mu_N(t),\mu_N^{(j)}(t)) + \sup_{t\in[0,T]}|Y^{(N)}(t) - Y^{(N,j)}(t)|\Big),
\end{align*}
and so by Jensen Inequality and again, the transient cost converges to $0$ as $j\to\infty$. 

Let us finally estimate the second integral in \eqref{F.N.finite}. 
Let us obseve that
\begin{align*}\int_0^T[\Psi_\rho(h_{*,N},g_{*,N}(Y^{(N)},\mu_N)) -& \Psi_\rho(h^{(j)}, g^{(j)}(Y^{(N,j)},\mu_N^{(j)}))]\,dt
\\&=\underbrace{\int_0^T[\Psi_\rho(h_{*,N},g_{*,N}(Y^{(N)},\mu_N)) - \Psi_\rho(h^{(j)}, g_{*,N}(Y^{(N)},\mu_N))]\,dt}_{=:(I)}
\\&\,+\underbrace{\int_0^T[\Psi_\rho(h^{(j)}, g_{*,N}(Y^{(N)},\mu_N)) - \Psi_\rho(h^{(j)}, g^{(j)}(Y^{(N)},\mu_N))]\,dt}_{=:(II)}
\\&\,+\underbrace{\int_0^T[\Psi_\rho(h^{(j)}, g^{(j)}(Y^{(N)},\mu_N)) - \Psi_\rho(h^{(j)}, g^{(j)}(Y^{(N,j)},\mu_N^{(j)}))]\,dt}_{=:(III)}.
\end{align*}
Let us start by the term $(III)$.
Since $(h^{(j)},g^{(j)})\in\mM(\Om;E^M)$, by \eqref{U.G.compatti} we get that $\Psi(h^{(j)}(t),\cdot)$ gets uniformly continuous $\mathbb{P}-$almost surely when acting on the second variable, so it has a concave continuity modulus $\om_3$ and in the same way as done above, we get the estimate
\begin{align*}\mathbb{E}[|(III)|]\lesssim_ {T,L}\om_3\Big(\mathbb{E}\Big[\sup_{t\in[0,T]}\mW_1(\mu_N(t),\mu_N^{(j)}(t)) + \sup_{t\in[0,T]}|Y^{(N)}-Y^{(N,j)}|\Big]\Big),
\end{align*}
which converges to $0$ as $j\to\infty$ like the previous terms. 

As for $(II)$, since \eqref{gj.weak.con.g.finite} holds, then by analogous considerations done above for $\Psi$, one has that since
\begin{equation*}|\Psi_\rho(h^{(j)},g_{*,N}(Y^{(N)},\mu_N)) - \Psi_\rho(h^{(j)},g^{(j)}(Y^{(N)},\mu_N)|\le\om_3(|g_{*,N}(Y^{(N)},\mu_N) - g^{(j)}(Y^{(N)},\mu_N)|)\to0,
\end{equation*}
as $j\to\infty$, for all $t\in[0,T]$ and almost surely in $\Om$; furthermore, since \eqref{U.G.compatti} and \eqref{Psi.in.assumptions} hold, then applying Lebesgue Dominated Convergence Theorem one also gets that
$\mathbb{E}[(II)]\to0$
as $j\to\infty$.

Finally, as for the term $(I)$, by \eqref{Psi.in.assumptions} we have that the function $h\longmapsto\Psi_\rho(h,g_{*,N}(Y(t),\nu_t))$ is convex, and as such it is lower semicontinuous with respect to the weak convergence in $L^1([0,T];\R^{d\times\ell})$, so that
\begin{equation*}\liminf_{j\to\infty}\int_0^T\Psi_\rho(h^{(j)},g_{*,N}(Y^{(N)},\mu_N))\,dt\ge\int_0^T\Psi_\rho(h_{*,N},g_{*,N}(Y^{(N)},\mu_N))\,dt:
\end{equation*}
since \eqref{U.G.compatti} and \eqref{Psi.in.assumptions} hold, then $\Psi_\rho(h^{(j)}(t),g_{*,N}(Y^{(N)}(t),\mu_N(t)))$ is bounded, so that by Fatou Lemma we get \eqref{final.condition.finite}.
    
\end{proof}

Now, let us focus on the mean-field problem. Let us suppose that the initial datum $(X_0,Y_0)\in L^p(\Om;\R^d)\times(\R^d)^M$ for $p\ge 2$ so that by Lemma \ref{thm:mean.field.well.posedness}, the problem \eqref{SDE.mean.field}-\eqref{Initial.data.mean.field} holds a solution $(X,Y)\in C([0,T];L^p(\Om;\R^d)\times L^{p}(\Om;(\R^d)^M))$ and moreover, the associated conditional measure given $\mF^{(c)}$ is such that $\mu^{(i)}\in C([0,T];L^p(\Om;\mW_p(\R^d)))$ (see boundedness estimates \eqref{mean.field.solution.boundedness}).

Even here, thanks to the Assumptions \ref{assumptions.control} we get that the functional $\mF\colon\,E^M\longmapsto\R$ defined as
\begin{equation*}\mF(h,g):=\mathbb{E}\Big[\int_0^T\Psi_\rho(h(t),g(Y(t),\mu(t)))\,dt + \int_0^T\Psi_\t(t,Y(t),\mu(t))\,dt + \Psi_\e(Y(T),\mu(T))\Big]
\end{equation*}
is well-posed. We are interested in solving the optimal control problem
\begin{equation}\label{mean.field.optimal.control.problem}\exists(h_*,g_*)\in \mM(\Om;E^M)\,\,\text{such that}\,\,\mF(h_*,g_*)=\min_{(h,g)\in\mM(\Om;E^M)}\mF(h,g).
\end{equation}
Also here, we provide a lemma analogous to Lemma \ref{lemma:stability.of.controls.finite}:

\begin{lemma}\label{lemma:stability.of.controls.mean.field}
In the Assumptions \ref{assumptions}, \ref{assumptions.control}, for any fixed $p\ge2$ let us suppose that there exists a sequence $(h^{(j)},g^{(j)})_{j\in\N}\subset\mM(\Om;E^M)$ such that $\mathbb{P}-$almost surely,
\begin{align}&h^{(j)}\rightharpoonup h\quad\text{weakly in}\,\,L^1 \label{h.approximation.assumptions.mean.field},
\\&g^{(j)}\to g\quad\text{strongly on}\,\,C((\R^d)^M\times\mW_p(\R^d)).\label{g.approximation.assumptions.mean.field}
\end{align}
Let $(X^{(j)},Y^{(j)})$ be the strong solution of \eqref{SDE.mean.field}-\eqref{Initial.data.mean.field} with initial datum 
\begin{equation*}(X^{(0)},Y^{(0)})\in L^p(\Om;\R^d)\times(\R^d)^M\end{equation*}
and control function $u^{(j)}$ such that for all $m$ with $1\le m\le M$, $u_m^{(j)}=h_m^{(j)}\cdot g_m^{(j)}$. Furthermore, let $(X,Y)$ be the strong solution of \eqref{SDE.mean.field}-\eqref{Initial.data.mean.field} with the same initial datum $(X^{(0)},Y^{(0)})$ and control function $u$ such that for all $m$ with $1\le m\le M$, $u_m=h_m\cdot g_m$.

\medskip

Then, calling $\mu_t^{(i,j)}$ the conditional measure given $\mF_t^{(c)}$ associated to $(X^{(j)}(t),Y^{(j)}(t))$ and calling $\mu^{(i)}(t)$ that of $(X(t),Y(t))$, it holds that
\begin{equation}\label{stability.of.real.solution}\lim_{j\to\infty}\mathbb{E}\Big[\sup_{t\in[0,T]}\mW_1(\mu^{(i)}(t),\mu^{(i,j)}(t)) + \sup_{t\in[0,T]}|Y(t)-Y^{(j)}(t)|\Big]=0.
\end{equation}

\end{lemma}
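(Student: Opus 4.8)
The plan is to follow the scheme of Lemma \ref{lemma:stability.of.controls.finite} verbatim, but with the empirical measures replaced by the conditional laws $\mu^{(i)}, \mu^{(i,j)}$ and with the coupling characterization of the Wasserstein distance used to convert measure differences into process differences. First I would write the difference processes $X-X^{(j)}$ and $Y_m-Y_m^{(j)}$ directly from the strong-solution identities \eqref{strong.solution.X.mean.field.common}-\eqref{strong.solutions.Y.mean.field.common}. The drift contributions are treated exactly as in Lemmas \ref{lemma:finite.well.posedness} and \ref{thm:mean.field.well.posedness}: the $H_1$-convolution splits, by \eqref{convoluted.function.lipschitz} and \eqref{convoluted.operator.lipschitz.pointwise}, into a piece bounded by $L|X-X^{(j)}|$ and a piece bounded by $L\,\mW_1(\mu^{(i)}(s),\mu^{(i,j)}(s))$, while the $K_1,K_2,H_2$ terms are Lipschitz via \eqref{K.L.Lipschitz}; the two stochastic integrals in the $X$-equation are absorbed by the Burkholder-Davis-Gundy inequality \eqref{Doob.maximal.inequality} together with \eqref{idiosyncratic.noise.is.lipschitz}-\eqref{common.noise.is.lipschitz}, precisely as in the well-posedness proof.

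The only genuinely new bookkeeping is the control term in the $Y$-equation, $(I):=\int_0^t[h_m g_m(Y,\mu^{(i)})-h_m^{(j)}g_m^{(j)}(Y^{(j)},\mu^{(i,j)})]\,ds$, which I would split into three pieces. The piece $\int_0^t[h_m-h_m^{(j)}]g_m(Y,\mu^{(i)})\,ds$ tends to $0$ pathwise as $j\to\infty$ by the weak $L^1$ convergence \eqref{h.approximation.assumptions.mean.field} tested against the fixed bounded map $s\mapsto g_m(Y(s),\mu^{(i)}(s))$; moreover, since $\mU$ is compact the family of these integrals is uniformly Lipschitz in $t$, so pointwise convergence upgrades to uniform convergence in $t\in[0,T]$, exactly as in Lemma \ref{lemma:stability.of.controls.finite}. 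The piece $\int_0^t h_m^{(j)}[g_m(Y,\mu^{(i)})-g_m^{(j)}(Y,\mu^{(i)})]\,ds$ is bounded by $M'T\,\|g_m-g_m^{(j)}\|_\infty$ and hence vanishes by the \emph{strong} convergence \eqref{g.approximation.assumptions.mean.field}; this is in fact cleaner than the dominated-convergence argument needed in the finite case, where only pointwise convergence was available. The Lipschitz remainder $\int_0^t h_m^{(j)}[g_m^{(j)}(Y,\mu^{(i)})-g_m^{(j)}(Y^{(j)},\mu^{(i,j)})]\,ds$ is controlled, using \eqref{G.description}, by $\int_0^t[|Y-Y^{(j)}|+\mW_1(\mu^{(i)}(s),\mu^{(i,j)}(s))]\,ds$. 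I would collect the first two pieces into a quantity $R^{(j)}$ that converges to $0$ pathwise and is uniformly bounded by \eqref{U.G.compatti}-\eqref{G.description}, so that passing to the $p$-th power and to expectation gives $\mathbb{E}[|R^{(j)}|^p]\to0$ by dominated convergence.

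The step that makes the measure terms close is the conditional coupling bound: since $(X(t),X^{(j)}(t))$ is, conditionally on $\mF_t^{(c)}$, a coupling of $\mu^{(i)}(t)$ and $\mu^{(i,j)}(t)$, the characterization \eqref{p.Wasserstein.metric.1} yields $\mathbb{P}^{(c)}$-almost surely $\mW_p^p(\mu^{(i)}(t),\mu^{(i,j)}(t))\le\mathbb{E}^{(i)}[|X(t)-X^{(j)}(t)|^p]$, whence, the conditional laws being $\mF^{(c)}$-measurable, $\mathbb{E}[\mW_1^p(\mu^{(i)}(s),\mu^{(i,j)}(s))]\le\mathbb{E}[\mW_p^p(\mu^{(i)}(s),\mu^{(i,j)}(s))]\le\mathbb{E}[|X(s)-X^{(j)}(s)|^p]$ by \eqref{Wasserstein.sort}. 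Writing $G(t):=\mathbb{E}[\sup_{s\in[0,t]}|X(s)-X^{(j)}(s)|^p+\sup_{s\in[0,t]}|Y(s)-Y^{(j)}(s)|^p]$, raising all the difference estimates to the $p$-th power and taking expectations, every measure-difference contribution is absorbed into $\int_0^t G(s)\,ds$, giving $G(t)\lesssim_{p,L,T,M'}\int_0^t G(s)\,ds+\mathbb{E}[|R^{(j)}|^p]$. Grönwall's inequality (Proposition \ref{prop:Gronwall.inequality}) then yields $G(T)\lesssim\mathbb{E}[|R^{(j)}|^p]\to0$. Finally I would deduce \eqref{stability.of.real.solution} by Hölder: $\mathbb{E}[\sup_t|Y-Y^{(j)}|]\le G(T)^{1/p}$, and, pulling the supremum inside the conditional expectation, $\mathbb{E}[\sup_t\mW_1(\mu^{(i)},\mu^{(i,j)})]\le(\mathbb{E}^{(c)}[\sup_t\mW_p^p(\mu^{(i)},\mu^{(i,j)})])^{1/p}\le G(T)^{1/p}\to0$.

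I expect the main obstacle to be the careful tracking of the conditioning on the common noise: one must justify interchanging the time supremum with the conditional expectation in the final coupling estimate, and one must confirm that the weak-$L^1$ piece of the control term vanishes \emph{uniformly} in $t$ (not merely for each fixed $t$) before expectations are taken, which relies on the compactness of $\mU$ to obtain equicontinuity. Everything else is a direct transcription of the Grönwall-plus-Burkholder-Davis-Gundy machinery already developed in Lemmas \ref{lemma:finite.well.posedness}, \ref{thm:mean.field.well.posedness} and \ref{lemma:stability.of.controls.finite}.
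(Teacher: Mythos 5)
Your proposal is correct and follows essentially the same route as the paper, which writes down the two difference inequalities (drift terms via the Lipschitz assumptions, stochastic integrals via Burkholder--Davis--Gundy) and then defers the treatment of the control term and the Gr\"onwall closure verbatim to Lemma \ref{lemma:stability.of.controls.finite}, with the conditional coupling bound $\mW_p^p(\mu^{(i)}(t),\mu^{(i,j)}(t))\le\mathbb{E}^{(i)}[|X(t)-X^{(j)}(t)|^p]$ playing the same role it already plays in \eqref{T.contraction.estimate.1}. The details you supply (the three-way splitting of the control term, the equicontinuity upgrade for the weak-$L^1$ piece, and the interchange of the time supremum with the conditional expectation) are exactly the steps the paper leaves implicit.
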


\begin{proof}
By definition of strong solutions, arguing as done in Lemma \ref{thm:mean.field.well.posedness} we have
\begin{align*}|X(t) - X^{(j)}(t)|^p
&\lesssim_{p,L,T}\int_0^t\sup_{z\in[0,s]}\Big\{|X(z)-X^{(j)}(z)|^p + |Y(z) - Y^{(j)}(z)|^p \Big\}\,ds
\\&+\sup_{s\in[0,t]}\Big|\int_0^s[\s_*^{(i)}(z,Y,X,\mu^{(i)}) - \s_*^{(i)}(z,Y^{(j)},X^{(j)},\mu^{(i,j)})]\,dW^{(i)}(z)\Big|^p
\\&+\sup_{s\in[0,t]}\Big|\int_0^s[\s_*^{(c)}(z,Y,X,\mu^{(i)}) - \s_*^{(c)}(z,Y^{(j)},X^{(j)},\mu^{(i,j)})]\,dW^{(c)}(z)\Big|^p
\end{align*}
as well as
\begin{align*}|Y_m(t) - Y_m^{(j)}(t)|^p
&\lesssim_{p,L,T}\int_0^t\sup_{z\in[0,s]}\Big\{|X(z)-X^{(j)}(z)|^p + |Y(z) - Y^{(j)}(z)|^p \Big\}\,ds
\\& + \Big|\int_0^t h_mg_m(Y,\mu) - h_m^{(j)}g_m^{(j)}(Y^{(j)},\mu^{(j)})\,ds\Big|^p
\end{align*}
Then, by using Assumptions \ref{assumptions.control}, one can reason exactly as done in Lemma \ref{lemma:stability.of.controls.finite} to conclude.
    
\end{proof}

Thanks to this lemma, we can get the existence of minima for $\mF$ as well:
\begin{lemma}\label{lemma:F.has.solution}
In the Assumptions \ref{assumptions} and \ref{assumptions.control}, supposing that $p\ge2$ and $(X_0,Y_0)\in L^p(\Om;\R^d)\times(\R^d)^M$ is the initial datum for \eqref{SDE.mean.field}-\eqref{Initial.data.mean.field}, we have that the problem \eqref{mean.field.optimal.control.problem} has a solution $(h_*,g_*)\in\mM(\Om;E^M)$.
\end{lemma}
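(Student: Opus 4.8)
The plan is to mimic the proof of Lemma \ref{lemma:F.N.has.solution} line by line, replacing the discrete stability result Lemma \ref{lemma:stability.of.controls.finite} by its mean-field analogue Lemma \ref{lemma:stability.of.controls.mean.field} and the empirical measure $\mu_N$ by the conditional law $\mu^{(i)}$; the functional $\mF$ has exactly the same three-term structure (running, transient, endpoint) as $\mF_N$, so the architecture of the argument carries over. First I would note that by \eqref{U.G.compatti}, \eqref{Psi.in.assumptions} and \eqref{L.assumptions} the functional $\mF$ is bounded from below, whence a minimizing sequence $(h^{(j)},g^{(j)})_{j\in\N}\subset\mM(\Om;E^M)$ exists with $\mF(h^{(j)},g^{(j)})\to\inf_{\mM(\Om;E^M)}\mF$.

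Second, I would extract the candidate minimizer. Since $\mU$ is compact, I may assume $\mathbb{P}$-almost surely that $h^{(j)}\rightharpoonup h_*$ weakly in $L^1$ with $h_*\in\mM(\Om;(L^1([0,T];\mU))^M)$, and since by \eqref{U.G.compatti}, \eqref{G.description} the family $\mG$ is compact and equi-Lipschitz, Ascoli--Arzel\`a lets me assume $g^{(j)}\to g_*\in\mG^M$ pointwise on $(\R^d)^M\times\mW_p(\R^d)$ and uniformly on compact subsets. These are precisely the hypotheses \eqref{h.approximation.assumptions.mean.field}--\eqref{g.approximation.assumptions.mean.field} of Lemma \ref{lemma:stability.of.controls.mean.field}, so the stability estimate \eqref{stability.of.real.solution} applies and $\mathbb{E}[\sup_{t\in[0,T]}\mW_1(\mu^{(i)}(t),\mu^{(i,j)}(t)) + \sup_{t\in[0,T]}|Y(t)-Y^{(j)}(t)|]\to0$.

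Third, the crux is the lower-semicontinuity inequality $\inf_{\mM(\Om;E^M)}\mF=\liminf_{j}\mF(h^{(j)},g^{(j)})\ge\mF(h_*,g_*)$. The endpoint and transient terms converge to $0$ by the concave continuity moduli furnished by \eqref{e.assumptions} and \eqref{L.assumptions}, combined with Jensen's inequality and the stability bound \eqref{stability.of.real.solution}, exactly as in the discrete case. For the running cost I would add and subtract to split the difference into $(I)+(II)+(III)$ as in Lemma \ref{lemma:F.N.has.solution}: term $(III)$ vanishes via the continuity modulus of $\Psi_\rho(h^{(j)},\cdot)$ and \eqref{stability.of.real.solution}; term $(II)$ vanishes via the pointwise convergence $g^{(j)}\to g_*$ and Lebesgue Dominated Convergence, the integrand being uniformly bounded by \eqref{U.G.compatti}, \eqref{G.description}; and term $(I)$ is bounded below using the convexity of $\Psi_\rho$ in its first slot \eqref{Psi.in.assumptions}, which yields lower semicontinuity under the weak $L^1$ convergence of $h^{(j)}$, followed by Fatou's Lemma to pass the liminf inside $\mathbb{E}$.

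Finally, the main obstacle: since here $h_*,g_*$ are only $\mF_t$-adapted random objects and $\mu^{(i)}$ is a random measure flow rather than a deterministic one, I expect the delicate point to be the term $(I)$, where convexity-based weak lower semicontinuity must be run $\om$-by-$\om$ (the weak $L^1$ convergence holding $\mathbb{P}$-almost surely) and then combined with Fatou in $\Om$; care is needed to guarantee measurability of $g_*$ and that the limiting pair $(h_*,g_*)$ genuinely lies in $\mM(\Om;E^M)$. Once this is secured, all remaining steps are routine transcriptions of the proof of Lemma \ref{lemma:F.N.has.solution}.
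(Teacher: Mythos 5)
Your proposal is correct and follows exactly the route the paper takes: the paper's own proof of this lemma is a one-line reference to repeating the argument of Lemma \ref{lemma:F.N.has.solution}, with Lemma \ref{lemma:stability.of.controls.mean.field} replacing Lemma \ref{lemma:stability.of.controls.finite} and $\mu^{(i)}$ replacing $\mu_N$, which is precisely the transcription you carry out. Your closing remark on running the weak-$L^1$ lower semicontinuity $\om$-by-$\om$ and checking that $(h_*,g_*)\in\mM(\Om;E^M)$ is a reasonable point of care that the paper leaves implicit.
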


\begin{proof} The proof can be done by following the same arguments done for proving Lemma \ref{lemma:F.N.has.solution}.

\end{proof}

Having shown the well-posedness for the optimal control problems \eqref{F.N.finite} and \eqref{F.mean.field} respectively for the discrete problem \eqref{SDE.finite}-\eqref{Initial.data.finite} and the mean-field one \eqref{SDE.mean.field}-\eqref{Initial.data.mean.field}, now we want to show that also in these terms the mean-field control problem is an approximation for the discrete ones. More precisely, we want to show the minima of \eqref{F.N.finite} $\Gamma-$converge to those of \eqref{F.mean.field}.

\begin{theorem}[$\Gamma-$convergence]\label{thm:Gamma.convergence}
The following facts hold.

$(i)$ In the notations and assumptions of Theorem \ref{thm:propagation.of.chaos} and Lemmas \ref{lemma:F.N.has.solution} and \ref{lemma:F.has.solution}, equipping $E$ with the product topology between the weak one for $L^1$ and the strong one for $C((\R^d)^M\times\mW_1(\R^d);\R^d))$, we get that
\begin{equation}\label{G.convergence}\mF_N\stackrel{\Gamma}{\to}\mF.
\end{equation}

$(ii)$ In the notations and assumptions made above, one has
\begin{equation}\label{convergence.of.minima}\lim_{N\to\infty}\min_{(h,g)\in\mM(\Om;E^M)}\mF_N(h,g) = \min_{(h,g)\in \mM(\Om;E^M)}\mF(h,g).
\end{equation}
    
\end{theorem}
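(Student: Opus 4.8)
The plan is to establish the two inequalities defining $\Gamma$-convergence---the $\liminf$ (lower bound) and the $\limsup$ (recovery sequence) inequalities---and then to deduce \eqref{convergence.of.minima} from the fundamental theorem of $\Gamma$-convergence combined with equicoercivity. Throughout I would use that, by \eqref{U.G.compatti}--\eqref{G.description}, admissible controls take values in the compact sets $\mU$ and $\mG$, so that every integrand below is uniformly bounded and the relevant compactness is automatic.

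For the $\liminf$ inequality, fix $(h,g)\in\mM(\Om;E^M)$ and an arbitrary sequence $(h^{(N)},g^{(N)})\to(h,g)$ in the product topology (weak $L^1$ in the $h$-component, strong on $C((\R^d)^M\times\mW_1(\R^d);\R^\ell)$ in the $g$-component). Denote by $(X^{(N)},Y^{(N)})$ the discrete solution driven by $(h^{(N)},g^{(N)})$ with $N$ particles, and by $(X,Y)$ the mean-field solution driven by $(h,g)$. The first step is convergence of the \emph{states}: combining Theorem \ref{thm:propagation.of.chaos} with the extension of Lemma \ref{lemma:stability.of.controls.finite} recorded in Remark \ref{remark:extension.stability.lemma} (the case $j=N$, where control and particle number vary together) gives \[\lim_{N\to\infty}\mathbb{E}\Big[\sup_{t\in[0,T]}\mW_1(\mu_N(t),\mu(t))+\sup_{t\in[0,T]}|Y^{(N)}(t)-Y(t)|\Big]=0.\] I would then split $\mF_N$ into endpoint, transient and running costs. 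The endpoint and transient terms depend only on the state, so the uniform continuity \eqref{L.assumptions}--\eqref{e.assumptions}, their concave moduli, Jensen's inequality and the above state convergence yield, exactly as in the proof of Lemma \ref{lemma:F.N.has.solution}, genuine limits towards the corresponding mean-field costs. The running cost is the crux. First I would replace $g^{(N)}(Y^{(N)},\mu_N)$ by $g(Y,\mu)$ inside $\Psi_\rho$: since $h^{(N)}$ ranges in the compact $\mU$, the map $\Psi_\rho$ is uniformly continuous in its second argument uniformly in the first, so by $g^{(N)}\to g$ and the state convergence (as in Remark \ref{remark:extension.stability.lemma}) the resulting error tends to $0$ in $L^1(\Om\times[0,T])$. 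It then remains to bound $\mathbb{E}[\int_0^T\Psi_\rho(h^{(N)},g(Y,\mu))\,dt]$ from below; here the integrand is convex in its first argument by \eqref{Psi.in.assumptions} and $h^{(N)}\rightharpoonup h$ weakly in $L^1$, so the classical lower semicontinuity of convex integral functionals under weak $L^1$ convergence, combined with Fatou's lemma to pass the expectation, gives $\liminf_N\mathbb{E}[\int_0^T\Psi_\rho(h^{(N)},g(Y,\mu))\,dt]\ge\mathbb{E}[\int_0^T\Psi_\rho(h,g(Y,\mu))\,dt]$. Adding the three contributions yields $\liminf_N\mF_N(h^{(N)},g^{(N)})\ge\mF(h,g)$.

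For the recovery sequence I would simply take the constant choice $(h^{(N)},g^{(N)})=(h,g)$, which trivially converges to $(h,g)$. With the control now fixed, Theorem \ref{thm:propagation.of.chaos} provides state convergence and, since there is no weak limit to handle, the continuity of $\Psi_\rho,\Psi_\t,\Psi_\e$ together with the uniform bounds and dominated convergence give $\lim_N\mF_N(h,g)=\mF(h,g)$, hence $\limsup_N\mF_N(h,g)\le\mF(h,g)$. This establishes \eqref{G.convergence}. For part $(ii)$, let $(h_{*,N},g_{*,N})$ minimize $\mF_N$ (Lemma \ref{lemma:F.N.has.solution}) and $(\bar h,\bar g)$ minimize $\mF$ (Lemma \ref{lemma:F.has.solution}). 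Equicoercivity holds because $h_{*,N}$ is uniformly bounded (values in $\mU$), hence $\mathbb{P}$-a.s.\ weakly $L^1$-precompact by Dunford--Pettis, while $g_{*,N}$ lies in the compact $\mG$; thus, performing the same extraction as in Lemma \ref{lemma:F.N.has.solution}, up to a subsequence $(h_{*,N},g_{*,N})\to(h_*,g_*)$ in the topology of $E$. The $\liminf$ inequality gives $\liminf_N\min\mF_N=\liminf_N\mF_N(h_{*,N},g_{*,N})\ge\mF(h_*,g_*)\ge\min\mF$, while minimality tested against the recovery sequence for $(\bar h,\bar g)$ gives $\limsup_N\min\mF_N\le\limsup_N\mF_N(\bar h,\bar g)=\mF(\bar h,\bar g)=\min\mF$; together these prove $\min\mF_N\to\min\mF$, and $(h_*,g_*)$ is a minimizer of $\mF$.

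The main obstacle I anticipate is the $\liminf$ estimate for the running cost, where the only available convergence of the time-controls $h^{(N)}$ is weak in $L^1$. This forces one to decouple that weak convergence---controlled solely through the convexity \eqref{Psi.in.assumptions} and an Ioffe-type lower semicontinuity theorem---from the strong convergence of the states and of $g^{(N)}$, which is handled by uniform continuity and dominated convergence. Making this rigorous in the present stochastic framework (normal-integrand measurability of $\Psi_\rho$, the $\mathbb{P}$-almost-sure nature of the weak limits extracted pathwise, and the interchange of these limits with the expectation $\mathbb{E}$) is the genuinely delicate point; the state convergence and the endpoint and transient terms, by contrast, follow the pattern already set in Lemmas \ref{lemma:stability.of.controls.finite}, \ref{lemma:stability.of.controls.mean.field} and \ref{lemma:F.N.has.solution}.
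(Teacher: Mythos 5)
Your proposal is correct and follows essentially the same route as the paper: the same splitting of the cost into endpoint, transient and running parts, the same two-step treatment of the running cost (uniform continuity in the second argument over the compact $\mU$ to pass to $g(Y,\mu)$, then convexity plus weak-$L^1$ lower semicontinuity and Fatou for the $h$-component), the constant recovery sequence, and the standard equicoercivity-plus-extraction argument for the convergence of minima. The only cosmetic difference is that the paper organizes the state convergence by inserting the intermediate discrete solution $f^{(N)}$ with the limit control $u$ (so that Theorem \ref{thm:propagation.of.chaos} and Remark \ref{remark:extension.stability.lemma} each handle one half of the triangle inequality), which is exactly what your appeal to those same results implicitly does.
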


\begin{proof}
$(i)$ Let us consider a generic sequence $(h_N,g_N)_{N\in\N}$ converging $\mathbb{P}-$almost surely in $\mM(\Om;E^M)$ to some point $(h,g)\in\mM(\Om;E^M)$ with respect to the topology given in the statement.
We will use the following notations for any $f\in\{X,Y,\mu\}$: by simply $f$ we are referring to equations \eqref{SDE.mean.field}-\eqref{Initial.data.mean.field} with control $u=(u_m:=h_mg_m)_{1\le m\le M}$; by $f_N$ we are referring to the latter equations with control $u_N=((u_N)_m:=(h_N)_m(g_N)_m)_{1\le m\le M}$; by $f^{(N)}$ we are referring to the equations \eqref{SDE.finite}-\eqref{Initial.data.finite} with control $u$; finally, by $f_N^{(N)}$ we are referring to the latter equations with control $u_N$.

\medskip

To establish the $\Gamma-$convergence of $\mF_N$ to $\mF$, we must show the liminf inequality first. Let us write
\begin{align*}\mF(h,g) - \mF_N(h_N,g_N)
&=\underbrace{\mathbb{E}\Big[\int_0^T[\Psi_\rho(h,g(Y,\mu^{(i)})) - \Psi_\rho(h_N,g_N(Y_N^{(N)},\mu_N^{(N)}))]\,dt\Big]}_{=:(I)}
\\&+\underbrace{\mathbb{E}\Big[\int_0^T[\Psi_\t(t,Y,\mu^{(i)}) - \Psi_\t(t,Y_N^{(N)},\mu_N^{(N)})]\,dt\Big]}_{=:(II)}
\\&+\underbrace{\mathbb{E}[\Psi_\e(Y(T),\mu^{(i)}(T)) - \Psi_\e(Y^{(N)}(T),\mu_N^{(N)}(T))]}_{=:(III)}
;
\end{align*}
to estimate $(II)$, one argues as in Lemmas \ref{lemma:F.N.has.solution} and \ref{lemma:F.has.solution} to get that
\begin{align*}|(II)|
&\lesssim_ {T,L}\om_2\Big(\mathbb{E}\Big[\sup_{t\in[0,T]}\mW_1(\mu^{(i)}(t),\mu_N^{(N)}(t)) + \sup_{t\in[0,T]}|Y(t)-Y_N^{(N)}(t)| \Big] \Big)
\\&\lesssim_{T,L}\om_2\Big(\mathbb{E}\Big[\sup_{t\in[0,T]}\mW_1(\mu^{(i)}(t),\mu^{(N)}(t)) + \sup_{t\in[0,T]}|Y(t)- Y^{(N)}(t)| \Big]
\\&\qquad\qquad+\mathbb{E}\Big[\sup_{t\in[0,T]}\mW_1(\mu^{(N)}(t),\mu_N^{(N)}(t)) + \sup_{t\in[0,T]}|Y^{(N)}(t) - Y_N^{(N)}(t)|\Big]\Big)
,
\end{align*}
which converges to $0$ as $N\to\infty$ thanks to Theorem \ref{thm:propagation.of.chaos} and Lemma \ref{lemma:stability.of.controls.mean.field}, Remark \ref{remark:extension.stability.lemma}. Similarly, one estimates $(III)$.

As for the term $(I)$, we write
\begin{align*}(I)
&=\underbrace{\mathbb{E}\Big[\int_0^T[\Psi_\rho(h,g(Y,\mu^{(i)})) - \Psi_\rho(h_N,g(Y,\mu^{(i)}))]\,dt \Big]}_{=:(I.1)}
\\&+\underbrace{\mathbb{E}\Big[\int_0^T[\Psi_\rho(h_N,g(Y,\mu^{(i)})) - \Psi_\rho(h_N,g(Y_N^{(N)},\mu_N^{(N)}))]\,dt \Big]}_{=:(I.2)}.
\end{align*}
Let us have a look at the term $(I.2)$. By \eqref{U.G.compatti} and \eqref{Psi.in.assumptions}, the integrand function is uniformly continuous, so that for it there exists a concave continuity modulus $\om_3$. We then get the estimate
\begin{align*}|(I.2)|
&\lesssim_T\om_3\Big(\mathbb{E}\Big[\sup_{t\in[0,T]}|g(Y(t),\mu^{(i)}(t)) - g(Y_N^{(N)}(t),\mu_N^{(N)}(t))|\Big]\Big),
\end{align*}
which tends to $0$ by \eqref{U.G.compatti} and Lebesgue Dominated Convergence Theorem. As for $(I.1)$, by convexity of $\Psi$ with respect to the first variable and consequently by its lower semicontinuity with respect to the weak $L^1$ topology, one can apply Fatou Lemma to get that $(II.1)\le0$: as a consequence, when $N\to\infty$, plugging all our estimates together we get the desired liminf-inequality.

Now, one has to find a recovering sequence to have the $\Gamma-$convergence: it is enough to take $(h_N,g_N):=(h,g)$, and so we are done. 

\medskip

$(ii)$ By Lemmas \ref{lemma:F.N.has.solution} and \ref{lemma:F.has.solution}, the statement is well-posed.
Let then $((h_{N,*},g_{N,*}))_{N\in\N}$ be a sequence of solutions to the optimal control problems \eqref{discrete.optimal.control.problem}. By the same considerations done in Lemma \ref{lemma:F.N.has.solution}, one can suppose that up to subsequences, there exists $(h_*,g_*)\in \mM(\Om;E^M)$ such that $\mathbb{P}-$almost surely, $h_{N,*}\rightharpoonup h_*$ weakly in $L^1$ and $g_{N,*}\to g_*$ in $C(K;\R^\ell)$ for any bounded set $K\subset(\R^d)^M\times\mW_p(\R^d)$ (which is relatively compact in $(\R^d)^M\times\mW_1(\R^d)$); by boundedness estimate \eqref{mean.field.solution.boundedness}, one can restrict each $g_{N,*}$ as well as $g_*$ can be restricted to such a $K$.
By what proved in point $(i)$, one has that $\displaystyle\lim_{N\to\infty}\mF_N(h,g)=\mF(h,g)$ for all $(h,g)\in E^M$, so that 
\begin{align*}\mF(h_*,g_*)
&\le\liminf_{N\to\infty}\mF_N(h_{N,*},g_{N,*})
\\&\le\limsup_{N2\to\infty}\mF_N(h_{N,*},g_{N,*})
\\&\le\lim_{N\to\infty}\mF_N(h,g)
\\&=\mF(h,g),
\end{align*}
so that we conclude by arbitrariness of $(h,g)\in \mM(\Om;E^M)$.

\end{proof}

\newpage

\begin{flushright}

\textbf{Giuseppe La Scala}

Mathematical and Physical Sciences for Advanced Materials and Technologies

Scuola Superiore Meridionale

Via Mezzocannone, 4, 80138 Naples, Italy

giuseppe.lascala-ssm@unina.it

\end{flushright}

\end{document}